\newtheorem{theorem}{Theorem}[section]
\newtheorem{lemma}[theorem]{Lemma}
\newtheorem{proposition}[theorem]{Proposition}
\newtheorem{definition}[theorem]{Definition}
\newtheorem{remark}[theorem]{Remark}
\newtheorem{hypothesis}[theorem]{Hypothesis}
\newtheorem{question}[theorem]{Question}
\def\N{\mathbb{N}}
\def\Z{\mathbb{Z}}
\def\R{\mathbb{R}}
\def\epsilon{\varepsilon}
\let\e=\varepsilon
\let\vp=\varphi
\let\t=\widetilde
\let\ol=\overline
\let\ul=\underline
\let\mc=\mathcal
\def\W{\mc{W}}
\def\hat{\widehat}
\def\tilde{\widetilde}
\DeclareMathOperator{\dist}{dist}
\DeclareMathOperator\supp{supp}
\newcommand{\su}[2]{\genfrac{}{}{0pt}{}{#1}{#2}}
\def\seq#1{(#1_n)_{n\in\N}}
\def\as{\quad\text{as }\;}
\def\for{\quad\text{for }\;}
\def\tilde{\widetilde}
\def\1{\mathbbm{1}}
\def\Sph{\mathbb{S}^{N-1}}
\newenvironment{formula}[1]{\begin{equation}\label{#1}}{\end{equation}\noindent}
\def\Fi#1{\begin{formula}{#1}}
\def\Ff{\end{formula}\noindent}
\newcommand{\be}{\begin{equation}}
\newcommand{\ee}{\end{equation}}
\newcommand{\baa}{\begin{array}}
\newcommand{\eaa}{\end{array}}
\newcommand{\ba}{\begin{eqnarray}}
\newcommand{\ea}{\end{eqnarray}}
\numberwithin{equation}{section}
\begin{document}
\date{}
\title{\bf{Spreading speeds and spreading sets of~reaction-diffusion equations}}
\author{Fran\c cois Hamel$^{\hbox{\small{ a}}}$ and Luca Rossi$^{\hbox{\small{ b,c }}}$\thanks{This work has received funding from Excellence Initiative of Aix-Marseille Universit\'e~-~A*MIDEX, a French ``Investissements d'Avenir'' programme, and from the French ANR RESISTE (ANR-18-CE45-0019) project. The first author acknowledges support of the Institut Henri Poincaré (UAR 839 CNRS-Sorbonne Universit\'e), and LabEx CARMIN (ANR-10-LABX-59-01). The first author is grateful to the hospitality of Universit\`a degli Studi di Roma La Sapienza, where part of this work was done.}\\
\\
\footnotesize{$^{\hbox{a }}$Aix Marseille Univ, CNRS, I2M, Marseille, France}\\
\footnotesize{$^{\hbox{b }}$SAPIENZA Univ Roma, Istituto ``G.~Castelnuovo'', Roma, Italy}\\
\footnotesize{$^{\hbox{c }}$CNRS, EHESS, CAMS, Paris, France}\\
}
\maketitle
	
\vspace{-5pt}

\begin{abstract}
\noindent{}This paper deals with the large time dynamics of bounded solutions of reaction-diffusion equations with unbounded initial support in $\R^N$. We prove a variational formula for the spreading speeds in any direction, and we also describe the asymptotic shape of the level sets of the solutions at large time. The Freidlin-G\"artner type formula for the spreading speeds involves newly introduced notions of bounded and unbounded directions of the initial support. The results hold for a large class of reaction terms and for solutions emanating from initial conditions with general unbounded support, whereas most of earlier results were concerned with more specific reactions and compactly supported or almost-planar initial conditions. We also prove some results of independent interest on some conditions guaranteeing the spreading of solutions with large initial support and the link between these conditions and the existence of traveling fronts with positive speed. The proofs use a mix of ODE and PDE methods, as well as some geometric arguments. The results are sharp and counterexamples are shown when the assumptions are not all fulfilled.
 
\medskip
 
\noindent{\small{\it{Mathematics Subject Classification}}: 35B06; 35B30; 35B40; 35C07; 35K57.}
\end{abstract}
	
	
		
\section{Introduction and preliminaries}\label{sec:intro}
	
We are interested in the large time dynamics of solutions of the reaction-diffusion equation
\Fi{homo}
\partial_t u=\Delta u+f(u),\quad t>0,\ x\in\R^N,
\Ff
with $N\ge1$ and initial conditions $u_0$ having unbounded support. More precisely, the reaction term~$f:[0,1]\to\R$ is of class $C^1([0,1])$ with $f(0)=f(1)=0$, and the initial conditions $u_0$ are assumed to be characteristic functions $\1_U$ of sets~$U$, i.e.
\be\label{defu0}
u_0(x)=\left\{\baa{ll}1&\hbox{if }x\in U,\vspace{3pt}\\ 0&\hbox{if }x\in\R^N\!\setminus\!U,\eaa\right.
\ee
where the initial support $U$ is an unbounded measurable subset of $\R^N$ (although some results also cover the case of bounded sets $U$).\footnote{We use the term ``initial support $U$", with an abuse of notation, to refer to the set where $u_0>0$. This set differs in general from the support $\supp u_0$ of $u_0$, which is defined as the complement of the largest open set of $\R^N$ where $u_0$ is equal to $0$ almost everywhere with respect to the Lebesgue measure. However,~$U$ coincides with $\supp u_0$ if and only if $U$ is closed and the intersection of $U$ with any non-trivial ball centered at any point of $U$ has a positive Lebesgue measure.} The Cauchy problem~\eqref{homo}-\eqref{defu0} is well posed and, for each~$u_0$, there is a unique bounded classical solution $u$ of~\eqref{homo} such that~$u(t,\cdot)\to u_0$ as~$t\to0^+$ in~$L^1_{loc}(\R^N)$. For mathematical convenience, we extend~$f$ by~$0$ in~$\R\setminus[0,1]$, and the extended function, still denoted $f$, is then Lipschitz continuous in $\R$.

Instead of initial conditions $u_0=\1_U$, we can also consider multiples~$\alpha\1_U$ of charac\-teristic functions, with $\alpha>0$, at the expense of some further assumptions on the reaction term~$f$, or even other more general initial conditions $0\leq u_0\leq 1$ for which the upper level set $\{x\in\R^N:u_0(x)\geq\theta\}$ is at bounded Hausdorff distance from the support~$\supp u_0$ of~$u_0$, where $\theta\in(0,1)$ is a suitable value depending on $f$, precisely given by Proposition~\ref{pro1} below. We also refer to Remark~\ref{remtheta} below for more details. But we preferred to keep the assumption~$u_0=\1_U$ for the sake of simplicity of the presentation and of readability of the statements, all the more as this case already gives rise to many interesting and non-trivial features, depending on the type and shape of the set~$U$. 


\subsubsection*{The main question}

Due to diffusion, the solution $u$ of~\eqref{homo}-\eqref{defu0} is of class $C^1$ in $t$ and $C^2$ in $x$ in $(0,+\infty)\times\R^N$, and
$$0<u<1\ \hbox{ in }(0,+\infty)\times\R^N$$
from the strong parabolic maximum principle, provided the Lebesgue measures of $U$ and~$\R^N\setminus U$ are positive. However, from parabolic estimates, at each finite time $t$, $u$ stays close to $1$ or $0$ in subregions of $U$ or $\R^N\setminus U$ which are far away from $\partial U$. 

One of the objectives of the present work is to describe more precisely the location at large time of the regions where~$u$ stays close to $1$ or $0$. How do these regions move and possibly spread in any direction? A fundamental issue is to understand whether and how the solution keeps a memory at large time of its initial support~$U$. A basic question is the following:

\begin{question}\label{questionA}
For a solution $u$ of~\eqref{homo}-\eqref{defu0} and for a given vector $e\in\R^N$ with unit Euclidean norm, we investigate the existence of a {\rm spreading speed}~$w(e)$ such that
\be\label{ass0}\left\{\baa{lll}
u(t,cte)\to1 & \hbox{as $t\to+\infty$} & \hbox{for every $0\le c<w(e)$},\vspace{5pt}\\
u(t,cte)\to0 & \hbox{as $t\to+\infty$} & \hbox{for every $c>w(e)$}.\eaa\right.
\ee
Can one find a formula for $w(e)$ and how does $w(e)$ depend on~$e$ and the initial~support~$U$? Moreover, is there a uniformity with respect to~$e$ in~\eqref{ass0} and, more generally speaking, are there {\rm spreading sets} which describe the asymptotic global shape of the level sets of~$u$ as~$t\to+\infty$?
\end{question}

We will answer this question in the main results, namely Theorems~\ref{th1}-\ref{th4}. The speed~$w(e)$ in~\eqref{ass0} can possibly be $+\infty$ in some directions $e$, and this actually occurs in the directions around which $U$ is unbounded, in a sense that will be made precise in Section~\ref{sec:FG}. The results will be established under some geometric assumptions on the set~$U$ and under a standard assumption on the reaction~$f$. We also provide several counterexamples in Section~\ref{sec6}. Local asymptotic symmetry properties of the level sets of~$u$ at large time are obtained in~\cite{HR2} for some reactions $f$, and further results on the flattening and more precise estimates on the location of the level sets at large time are proved in~\cite{HR3} when the initial support $U$ is a subgraph.

The situation considered in this paper can be viewed as a  counterpart of many papers devoted to the large time dynamics of solutions of~\eqref{homo} with initial conditions~$u_0$ that are compactly supported or converge to $0$ at infinity. We refer to e.g.~\cite{AW,DM1,LK,MZ1,MZ2,Z} for extinction/invasion results in terms of the size and/or the amplitude of the initial condition~$u_0$ for various reaction terms $f$, and to~\cite{DM1,DP,MP1,MP2,P1} for general local convergence and quasiconvergence results at large time. For the invading solutions~$u$ (that is, those converging to $1$ locally uniformly in $\R^N$ as $t\to+\infty$) with localized initial conditions, further estimates on the location and shape at large time of the level sets have been established in~\cite{D,G1,J,RRR,R2,R,U}. Lastly, equations of the type~\eqref{homo} set in unbounded domains~$\Omega$ instead of~$\R^N$ and notions of spreading speeds and persistence/invasion in such domains, still with compactly supported initial conditions, have been investigated in~\cite{BHN2,R3}.

The case of general unbounded initial supports $U$ has been much less investigated in the lite\-rature. One immediately sees that, for general unbounded sets $U$, Question~\ref{questionA} is much more intricate than in the case of bounded sets $U$, since the solutions $u$ can spread from all regions of the initial support $U$, that is, not only from a single bounded region. The sets~$U$ themselves can be bounded in some directions and unbounded in others. Among other things, new notions of bounded and unbounded directions of the initial support will be defined in this paper, in order to show the existence and a new variational characterization of the spreading sets of the solutions. Subtle geometric arguments will be used and new retracting super-solutions will also be introduced in the proofs. Counterexamples will also be listed when at least one of the main assumptions is not satisfied, thus showing the sharpness of the results.


\subsubsection*{The main hypothesis and a preliminary result}\label{sec:hyp}

In this section, we list some notations and we state the main hypothesis used in the main results, as well as an important preliminary result which is a consequence of the main hypothesis. Throughout the paper, ``$|\ |$'' and ``$\ \cdot\ $'' denote respectively the Euclidean norm and inner product in $\R^N$,
$$B_r(x):=\{y\in\R^N:|y-x|<r\}$$
is the open Euclidean ball of center $x\in\R^N$ and radius $r>0$, $B_r:=B_r(0)$, and $\Sph:=\{e\in\R^N:|e|=1\}$ is the unit Euclidean sphere of~$\R^N$. The distance of a point $x\in\R^N$ to a set $A\subset\R^N$ is given by $\dist(x,A):=\inf\big\{|y-x|:y\in A\big\}$, with the convention $\dist(x,\emptyset)=+\infty$.

Since both $0$ and $1$ are steady states, one cannot determine a priori which one, if any, will win, in the sense that it will attract the solutions~$u$ of~\eqref{homo}-\eqref{defu0}. A standard way to differentiate the roles of~$0$ and~$1$ is to assume that there is a planar traveling front connecting the steady states~$1$ and~$0$:
	
\begin{hypothesis}\label{hyp}
Equation~\eqref{homo} admits a traveling front connecting $1$ to~$0$ with positive speed $c_0>0$, 
that is, a solution $u(t,x)=\vp(x\.e-c_0t)$ with $e\in\Sph$, $c_0>0$ and
\be\label{limitsvp}
1=\vp(-\infty)>\vp(z)>\vp(+\infty)=0\hbox{ \quad for all $z\in\R$}.
\ee
\end{hypothesis}

We point out that Hypothesis~\ref{hyp} only depends on the reaction term $f$, and not on the spatial dimension $N$ (hence it could actually be required in dimension $N=1$). In dimension $N>1$, the level sets of a traveling front are hyperplanes. The profile $\vp$ is necessarily decreasing (see Lemma~\ref{lem:phi'<0} below for further details). 
 
Hypothesis~\ref{hyp} is fulfilled for instance if
\be\label{positive}
f>0\ \hbox{ in $(0,1)$},
\ee
or if $f$ is of the ignition type:
\be\label{ignition}
\exists\,\alpha\in(0,1),\ \ f=0\hbox{ in $[0,\alpha]\ $ and $\ f>0$ in $(\alpha,1)$},
\ee
or if $f$ is of the bistable type:
\be\label{bistable}
\exists\,\alpha\in(0,1),\ \ f<0\hbox{ in }(0,\alpha)\ \hbox{ and }\ f>0\hbox{ in }(\alpha,1)
\ee
with $\int_0^1f(s)\,ds>0$ (in the last two cases, the speed $c_0$ is unique), see~\cite{AW,FM,F,KPP}. Hypothesis~\ref{hyp} is also satisfied for some functions~$f$ having multiple oscillations in the interval~$[0,1]$ (see~\cite{FM} and the comments on the example~\eqref{tristable} at the end of this section).

The following result, which is a preliminary result for the main results, and which has its own independent interest, shows that Hypothesis~\ref{hyp} is equivalent to the existence of a positive minimal speed $c^*$ of traveling fronts connecting $1$ to $0$, and that it implies the spreading of the solutions of~\eqref{homo} with sufficiently large initial supports.

\begin{proposition}\label{pro1}
Assume Hypothesis~$\ref{hyp}$. Then equation~\eqref{homo} admits a traveling front connecting $1$ to $0$ with minimal speed $c^*$, and $c^*>0$.\footnote{The minimality of $c^*$ means that~\eqref{homo} in $\R$ admits a solution of the form $\vp(x-c^*t)$ satisfying~\eqref{limitsvp}, and it does not admit a solution of the same type with $c^*$ replaced by a smaller quantity. Notice that, necessarily, $c^*\le c_0$ under the notation of Hypothesis~\ref{hyp}.} Furthermore, there exist $\theta\in(0,1)$ and $\rho>0$ such that if
\be\label{hyptheta}
\theta\,\1_{B_\rho(x_0)}\le u_0\le1\hbox{ in }\R^N
\ee
for some $x_0\in\R^N$, then the solution $u$ of~\eqref{homo} with initial condition $u_0$ satisfies $u(t,x)\to1$ as $t\to+\infty$ locally uniformly with respect to $x\in\R^N$, and even
\be\label{c<c*}
\forall\,c\in(0,c^*),\quad
\min_{\overline{B_{ct}}}u(t,\cdot)\to1\as t\to+\infty.
\ee
Lastly, for any compactly supported initial condition $0\le u_0\le 1$, the solution $u$ of~\eqref{homo} satisfies
\Fi{c>c*}\forall\,c>c^*,\quad
\max_{\R^N\setminus B_{ct}}u(t,\cdot)\to0\as t\to+\infty.
\Ff
\end{proposition}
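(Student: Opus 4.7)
The plan is to handle the three assertions in sequence: the existence and positivity of the minimal speed $c^*$, the spreading estimate~\eqref{c<c*}, and the upper estimate~\eqref{c>c*}.

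\emph{Minimal speed.} I rely on phase-plane analysis of the traveling-wave ODE $\varphi''+c\varphi'+f(\varphi)=0$ on $\R$ with $\varphi(-\infty)=1$, $\varphi(+\infty)=0$. Lemma~\ref{lem:phi'<0} (cited in the excerpt) ensures $\varphi'<0$, so I set $p(\varphi):=-\varphi'(z)$, viewed as a function of $\varphi\in(0,1)$. Multiplying the ODE by $\varphi'$ and integrating over $\R$ yields the energy identity $c\int_\R(\varphi'(z))^2\,dz=F(1)$, where $F(u):=\int_0^u f$; since Hypothesis~\ref{hyp} provides a front at speed $c_0>0$, this forces $F(1)>0$. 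The first-order relation $(p^2)'(\varphi)=-2cp+2f(\varphi)$ together with $p(0)=0$ integrates to
$$p(\varphi)^2=2F(\varphi)-2c\int_0^\varphi p\,ds\le 2\max_{[0,1]}F,$$
a bound uniform in $c>0$. Since $\int_\R(\varphi'(z))^2\,dz=\int_0^1 p(\varphi)\,d\varphi\le\|p\|_\infty$, the energy identity then forces $c\ge F(1)/\sqrt{2\max F}>0$. Hence $c^*:=\inf\{c:\text{a front of speed }c\text{ exists}\}$ is strictly positive. To produce a front at $c=c^*$, I take a minimizing sequence $\varphi_n$ with speeds $c_n\downarrow c^*$, normalize $\varphi_n(0)=1/2$, extract a locally uniform limit by elliptic regularity together with the uniform phase-plane bound on $p$, and verify the limits at $\pm\infty$ using monotonicity and the same bound.

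\emph{Spreading.} The key step is to exhibit a compactly supported stationary subsolution close to $1$ on a large ball. I would solve the parabolic Dirichlet problem on $B_\rho$ with initial datum $\theta$ and zero boundary data, for $\theta$ close to $1$ and $\rho$ large; monotonicity in $t$ yields a stationary limit $V_{\rho,\theta}$. The condition $F(1)>0$ established above, combined with an energy argument on $B_\rho$, prevents this limit from being trivial and forces $V_{\rho,\theta}$ to exceed any prescribed $\theta_0<1$ on $B_{\rho/2}$ once $\rho$ is large enough. Extending by $0$ produces a compactly supported weak stationary subsolution on $\R^N$; comparison with $u_0\ge\theta\,\1_{B_\rho(x_0)}$ gives $u(t,\cdot)\ge V_{\rho,\theta}(\cdot-x_0)$ for every $t>0$, and iterating via spatial translates yields $u\to 1$ locally uniformly. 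For the sharp rate~\eqref{c<c*}, for any $c'\in(c,c^*)$ I would compare $u$ from below with the family of planar fronts $\varphi^*(x\cdot e-c't-\xi_e)$ indexed by $e\in\Sph$: once local convergence $u\to 1$ is known, at some large time $t_0$ the shifts $\xi_e$ can be chosen uniformly in $e$ so that $u(t_0,x)\ge\varphi^*(x\cdot e-\xi_e)$ for every $e$, and the parabolic comparison principle then propagates this bound for all $t>t_0$, giving $\min_{\overline{B_{ct}}}u\to 1$.

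\emph{Upper bound and main obstacle.} For $u_0$ compactly supported, at a small time $\tau>0$ one has $u(\tau,\cdot)<1$ everywhere by the strong maximum principle and $u(\tau,x)\to 0$ as $|x|\to\infty$ by parabolic decay; hence there exists $K$ such that $u(\tau,x)\le\varphi^*(x\cdot e+K)$ for every $e\in\Sph$ and $x\in\R^N$. Comparison with the planar fronts then yields $u(t,x)\le\varphi^*(x\cdot e-c^*(t-\tau)+K)$, and taking the infimum over $e\in\Sph$ together with $\varphi^*(+\infty)=0$ gives $\max_{\R^N\setminus B_{ct}}u(t,\cdot)\to 0$ for $c>c^*$. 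The main obstacle I anticipate is the construction of the compactly supported subsolution in the spreading step, especially in bistable or multi-stable settings where $f$ may be negative on part of $(0,1)$: it is precisely there that the qualitative input of Hypothesis~\ref{hyp} (the existence of an invading front) must be converted, through the energy bound $F(1)>0$, into the quantitative input (a non-trivial Dirichlet limit on a large ball) needed to ignite spreading.
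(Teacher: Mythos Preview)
There is a genuine gap in your spreading step. You propose to obtain~\eqref{c<c*} by placing, at some time $t_0$, a family of planar fronts $\varphi^*(x\cdot e-\xi_e)$ below $u(t_0,\cdot)$ uniformly in $e\in\Sph$. This is impossible when $u_0$ satisfies~\eqref{hyptheta}: for any finite $t_0$ the solution $u(t_0,\cdot)$ decays to $0$ as $|x|\to\infty$, whereas $\varphi^*(x\cdot e-\xi_e)\to1$ as $x\cdot e\to-\infty$. No shift $\xi_e$ can make the inequality $u(t_0,x)\ge\varphi^*(x\cdot e-\xi_e)$ hold on the whole half-space $\{x\cdot e\ll\xi_e\}$. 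This is exactly the obstruction that forces the paper (following Aronson--Weinberger) to construct, in Proposition~\ref{pro:fronts}(i), an \emph{incomplete} front profile $\ul\varphi$ on a finite interval $[0,a]$ for speeds $c\in[c^*-\eta,c^*)$, with $\ul\varphi'(0)=0$ and $\ul\varphi(a)=0$: such a profile can be radially symmetrized and extended by zero to yield a compactly supported subsolution whose support expands at speed $c$, and this is what drives~\eqref{c<c*}.

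Two smaller points. First, in the minimal-speed step, ``verify the limits at $\pm\infty$ using monotonicity and the same bound'' skips the real difficulty: the limiting profile $\varphi^*$ is monotone and its limit at $+\infty$ is a zero of $f$, but nothing in your argument rules out convergence to an intermediate zero (as in the tristable example~\eqref{tristable}). The paper handles this in Lemma~\ref{lemminimal} by a phase-plane comparison between the trajectories $p^*$ and $p_0$. Second, for the invasion property you invoke only $F(1)>0$, but this is not sufficient to produce a nontrivial Dirichlet limit (consider a bistable $f$ with an additional stable zero in $(0,1)$); the paper first extracts from Hypothesis~\ref{hyp} the stronger conditions $\int_t^1 f>0$ for all $t\in[0,1)$ and $f>0$ on $[\theta,1)$ (Lemma~\ref{lemhyp3}), and only then invokes the invasion criterion of Proposition~\ref{pro:hyp1}. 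Your upper-bound argument for~\eqref{c>c*}, by contrast, is essentially correct and close in spirit to the paper's use of Proposition~\ref{pro:fronts}(ii).
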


Several comments are in order. Firstly, under Hypothesis~\ref{hyp}, Proposition~\ref{pro1} answers Question~\ref{questionA} for compactly supported initial data $u_0$ satisfying~\eqref{hyptheta}: namely, the solutions~$u$ then have a spreading speed $w(e)$ in any direction $e\in\Sph$, and $w(e)=c^*$. Proposition~\ref{pro1} can then be viewed as a natural extension, for instance to multistable equations such as \eqref{tristable} below, of some results of the seminal paper~\cite{AW}, which were obtained under more specific conditions on~$f$. 

Secondly, if $f$ is such that
\Fi{HTconditions}
f>0 \text{ \ in $(0,1)$\quad and \quad }\liminf_{s\to0^+}\frac{f(s)}{s^{1+2/N}}>0,
\Ff 
then Proposition~\ref{pro1} can be improved using~\cite{AW} by allowing $\theta\in(0,1)$ and $\rho>0$ in~\eqref{hyptheta} to be arbitrary. The conclusion in such a case yields the so-called {\em hair trigger effect}. If $f>0$ in~$(0,1)$ (without any further assumption on the behavior of $f(s)$ as $s\to0^+$), then, for any $\theta\in(0,1)$, there is $\rho>0$ large enough such that the conclusions of Proposition~\ref{pro1} hold, see~\cite{AW} (this fact can also be viewed as a particular case of Proposition~\ref{pro:hyp1} below). If~$f$ is of the ignition type~\eqref{ignition}, or the bistable type~\eqref{bistable} with $\int_0^1f(s)ds>0$, then $\theta$ can be any real number in the interval $(\alpha,1)$, provided $\rho>0$ is large enough, see~\cite{AW,FM}. On the other hand, without condition~\eqref{hyptheta}, the conclusion~\eqref{c<c*} of Proposition~\ref{pro1} does not hold in general, even with Hypothesis~\ref{hyp} (for instance in the bistable case~\eqref{bistable} with $\int_0^1f(s)ds>0$).

Thirdly, if there is~$\delta>0$ such that $f$ is nonincreasing in~$[0,\delta]$ and in~$[1-\delta,1]$, and if~\eqref{homo} in $\R$ has a traveling front with positive minimal speed $c^*$, then $\theta$ and $\rho$ exist as in Proposition~\ref{pro1} and~\eqref{c<c*}-\eqref{c>c*} hold, see~\cite[Theorem~1.5]{DR}. Proposition~\ref{pro1} means that this implication holds without any further assumption on $f$, and that the existence of a traveling front with positive speed is actually sufficient to get the conclusion.

Lastly, we point out that, even if there exist $\theta\in(0,1)$ and $\rho>0$ such that the initial conditions $u_0$ satisfying~\eqref{hyptheta} give rise to solutions $u$ converging to $1$ as $t\to+\infty$ locally uniformly in $\R^N$, that nevertheless does not mean in general that Hypothesis~\ref{hyp} holds. For instance, consider equation~\eqref{homo} in dimension $N=1$ with a tristable function $f$ satisfying
\be\label{tristable}\baa{l}
\exists\,0<\alpha<\beta<\gamma<1,\vspace{3pt}\\
\qquad\qquad\left\{\baa{ll}
f<0\hbox{ in }(0,\alpha),\ f>0\hbox{ in }(\alpha,\beta), & f<0\hbox{ in }(\beta,\gamma),\ f>0\hbox{ in }(\gamma,1),\vspace{3pt}\\
\displaystyle\int_0^\beta f(s)ds>0, & \displaystyle\int_\beta^1 f(s)ds>0,\eaa\right.\eaa
\ee
and let $c_1$ and $c_2$ be the unique (positive) speeds of the traveling fronts $\vp_1(x-c_1t)$ and $\vp_2(x-c_2t)$ connecting $\beta$~to~$0$, and~$1$ to~$\beta$, respectively. On the one hand, if~$c_1\ge c_2$, then Hypothesis~\ref{hyp} is not satisfied (in such a case instead of a single front there exists a terrace, see~\cite{FM}) but if $u_0$ fulfills~\eqref{hyptheta} with given $\theta\in(\gamma,1)$ and a sufficiently large $\rho>0$, then $u$ satisfies $u(t,\cdot)\to1$ as $t\to+\infty$ locally uniformly in $\R^N$, and even~\eqref{c<c*} for some $c^*>0$, see~\cite{FM} (or Proposition~\ref{pro:hyp1} and Remark~\ref{rem:spreadingspeed} below). On the other hand, Hypothesis~\ref{hyp} is satisfied if (hence, only if) $c_1<c_2$, see again~\cite{FM}.

	
\section{Main results}\label{sec:FG}

\subsubsection*{The main results: Freidlin-G\"artner type formula, spreading speeds, spreading sets}

In this section, under Hypothesis~\ref{hyp}, we investigate the asymptotic set of spreading for the solutions $u$ of~\eqref{homo}-\eqref{defu0} with general unbounded initial supports $U$ containing large enough balls. Such solutions $u$ then converge to $1$ as $t\to+\infty$ locally uniformly in $\R^N$, and even satisfy~\eqref{c<c*}, with $c^*>0$ given by Proposition~\ref{pro1}. But we now want to provide a more precise description of the invasion of the state~$0$ by the state $1$. We point out that the invasion cannot be uniform in all directions in general, since it shall strongly depend on the initial support~$U$. For~$e\in\Sph$, we then look for a quantity $w(e)\in(0,+\infty]$ satisfying~\eqref{ass0}. This quantity would then be referred to as the {\em spreading speed} and it would represent the asymptotic speed at which the level sets, with levels between $0$ and $1$, move in the direction $e$. If it exists, it necessarily satisfies $w(e)\ge c^*$ by Proposition~\ref{pro1}. However, in contradistinction with the case of compactly supported initial conditions, the spreading speeds of solutions of~\eqref{homo}-\eqref{defu0} may not exist when the initial supports~$U$ are unbounded (see Proposition~\ref{pro:ce1} below).

In order to give the existence of and a formula for the spreading speeds in all directions, we introduce the notions of sets of directions ``around which~$U$ is bounded'' and ``around which~$U$ is unbounded'', for short the set of {\it bounded directions} and the set of {\it unbounded directions}, defined by:
$$\mc{B}(U):=\Big\{\xi\in\Sph:\liminf_{\tau\to+\infty}\frac{\dist(\tau\xi,U)}{\tau}>0\Big\}$$
and
$$\mc{U}(U):=\Big\{\xi\in\Sph:\lim_{\tau\to+\infty}\frac{\dist(\tau\xi,U)}{\tau}=0\Big\}.$$
The sets~$\mc{B}(U)$ and $\mc{U}(U)$ are respectively open and closed relatively to $\Sph$. The condition $\xi\in\mc{B}(U)$ is equivalent to the existence of an open cone $\mc{C}$ containing the ray~$\R^+\{\xi\}=\{\tau\,\xi:\tau>0\}$ such that $U\cap\mc{C}$ is bounded, that is, $\R^+\{\xi\}\,\subset\,\mc{C}\,\subset\,(\R^N\!\setminus\!U)\cup B_R$ for some $R>0$. Conversely, for any $\xi\in\mc{U}(U)$ and any open cone $\mc{C}$ containing the ray~$\R^+\{\xi\}$, the set $U\cap\mc{C}$ is then unbounded. We also define the notion of positive-distance-interior~$U_\rho$ (with $\rho>0$) of the set $U$ as
$$U_\rho:=\big\{x\in U:\dist(x,\partial U)\ge\rho\big\}.$$

Our first main result shows the existence of spreading speeds, under a 
geometric condition on $U$ in~\eqref{defu0}, and it provides an explicit formula for these speeds.

\begin{theorem}\label{th1}
Assume that Hypothesis $\ref{hyp}$ holds, let~$c^*>0$ and $\rho>0$ be given by Proposition~$\ref{pro1}$, and let~$u$ be the solution of~\eqref{homo}-\eqref{defu0}, with $U\subset\R^N$ satisfying $U_\rho\neq\emptyset$~and
\Fi{hyp:U}
\mc{B}(U)\cup\,\mc{U}(U_\rho)=\Sph.
\Ff
Then, for every $e\in\Sph$, there exists $w(e)\in[c^*,+\infty]$ such that~\eqref{ass0} holds, as well as the stronger property
\Fi{ass}\left\{\baa{lll}
\displaystyle\lim_{t\to+\infty}\,\Big(\min_{0\le s\le c}u(t, ste)\Big)\!&\! =1 & \text{for every }0\le c<w(e),\vspace{3pt}\\
\displaystyle\lim_{t\to+\infty}\,\Big(\sup_{s\ge c}u(t,ste)\Big)\!&\!=0 & \text{for every }c>w(e).\eaa\right.
\Ff
Furthermore, $w(e)$ is given explicitly by the variational formula
\Fi{FGgeneral}
w(e)=\sup_{\xi\in\mc{U}(U),\ \xi\.e\ge0}\frac{c^*}{\sqrt{1-(\xi\.e)^2}}
\Ff
with the conventions
\be\label{conv}\left\{\baa{ll}
w(e)=c^* & \hbox{if there is no $\xi\in\mc{U}(U)$ such that $\xi\.e\ge0$},\vspace{3pt}\\
w(e)=+\infty & \hbox{if $e\in\mc{U}(U)$.}\eaa\right.
\ee
\end{theorem}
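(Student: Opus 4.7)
The strategy is to define $w(e)$ directly by~\eqref{FGgeneral}--\eqref{conv} and then verify the stronger asymptotics~\eqref{ass} (which contains~\eqref{ass0}) by proving the lower and upper bounds separately. Before doing so, I would record two simplifications. First, \eqref{hyp:U} forces $\mc{U}(U_\rho)=\mc{U}(U)$, because $\mc{U}(U_\rho)\subseteq\mc{U}(U)$ (from $U_\rho\subseteq U$) and $\mc{B}(U)\cap\mc{U}(U)=\emptyset$ by definition, so the covering property yields $\mc{U}(U_\rho)=\Sph\setminus\mc{B}(U)\supseteq\mc{U}(U)$. Second, the conventions in~\eqref{conv} are absorbed into the generic analysis: if $e\in\mc{U}(U)$, taking $\xi=e$ in~\eqref{FGgeneral} yields $w(e)=+\infty$ and only the lower bound needs attention; if no $\xi\in\mc{U}(U)$ has $\xi\cdot e\ge0$, then $w(e)=c^*$ and the lower bound is immediate from Proposition~\ref{pro1} applied to a ball $B_\rho(x_0)\subseteq U_\rho$. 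In the intermediate case, closedness of $\mc{U}(U)$ forces $\eta:=\sup\{\xi\cdot e:\xi\in\mc{U}(U),\ \xi\cdot e\ge0\}$ to be attained in $[0,1)$, so that $w(e)=c^*/\sqrt{1-\eta^2}$ and there is a strict angular gap between $\mc{U}(U)\cap\{\xi\cdot e\ge0\}$ and $\{e\}$.

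For the lower bound, I would fix $\xi\in\mc{U}(U)=\mc{U}(U_\rho)$ with $\xi\cdot e>0$ and $c<c^*/\sqrt{1-(\xi\cdot e)^2}$. The geometric idea is to \emph{lift} the spreading origin from $0$ to a point deep inside the tail of $U$ along $\xi$: for each $s\in[0,c]$, the optimal source position on the ray $\R^+\xi$ is $\tau_s:=st(\xi\cdot e)\ge0$, at which
\[
\big|ste-\tau_s\xi\big|=st\sqrt{1-(\xi\cdot e)^2}.
\]
Since $\xi\in\mc{U}(U_\rho)$, I can pick $x_{s,t}\in U_\rho$ with $|x_{s,t}-\tau_s\xi|=o(\tau_s)$ as $\tau_s\to+\infty$, so that for a suitable $\epsilon>0$ depending on $c^*-c\sqrt{1-(\xi\cdot e)^2}$, one has $|ste-x_{s,t}|\le(c^*-\epsilon)t$ for all $s$ in a range $[s_0,c]$ and $t$ large; the regime $s\in[0,s_0]$ is handled separately using a fixed ball $B_\rho(x_0)\subseteq U_\rho$ together with the uniform spreading~\eqref{c<c*}. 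Comparison with the Cauchy problem starting from $\1_{B_\rho(x_{s,t})}\le u_0$, combined with translation invariance and Proposition~\ref{pro1}, then forces $u(t,ste)\to1$ uniformly in $s\in[0,c]$. Taking the supremum over admissible $\xi$ yields $w(e)\ge$ the right-hand side of~\eqref{FGgeneral} and the first line of~\eqref{ass}.

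The upper bound is the main difficulty. Given $c>w(e)$, I need a supersolution $V(t,x)$ with $V(0,\cdot)\ge\1_U$ and $V(t,ste)\to0$ uniformly in $s\ge c$, whence the comparison principle forces $\sup_{s\ge c}u(t,ste)\to0$. The natural candidates are envelopes of translated minimal-speed traveling-front profiles $\vp\bigl(\nu\cdot(x-x_\nu(t))\bigr)$, each advancing at speed~$c^*$ (which exists by Proposition~\ref{pro1}), indexed by a set of directions~$\nu$ that must collectively dominate $\1_U$ at $t=0$. The hypothesis~\eqref{hyp:U} is precisely what permits such a dominating family: $\Sph\setminus\mc{U}(U)=\mc{B}(U)$, so outside an arbitrarily small neighborhood of $\mc{U}(U)$ in $\Sph$, $U$ can be cut off by suitable half-spaces (up to compact errors handled by the compactly-supported decay bound~\eqref{c>c*}); and the angular separation $\eta<1$ leaves room to orient fronts so as to push $cte$ outside their invaded regions while still lying above $\1_U$ on the tails of $U$. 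The novelty signalled by the authors is that the base points $x_\nu(t)$ must \emph{retract} rather than merely translate at the front speed, so that the effective envelope shrinks along directions where $U$ extends to infinity; ensuring simultaneously that~$V$ satisfies the supersolution inequality $\partial_tV-\Delta V-f(V)\ge0$ on $(0,+\infty)\times\R^N$ and dominates~$\1_U$ at $t=0$ is where I expect the bulk of the technical work. Once such a~$V$ is in hand, comparison closes the upper bound, pins down $w(e)$ as the unique speed satisfying~\eqref{ass0}, and yields the full uniform conclusion~\eqref{ass}.
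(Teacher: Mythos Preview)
Your lower-bound argument is essentially the paper's: placing a ball $B_\rho(x_{s,t})\subset U_\rho$ near $\tau_s\xi$ and invoking Proposition~\ref{pro1} is precisely the content of Lemma~\ref{lem:U+B} applied along the segment $\{ste:0\le s\le c\}$, and the reduction $\mc{U}(U)=\mc{U}(U_\rho)$ under~\eqref{hyp:U} is also recorded in the paper.

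The upper bound, however, has a genuine gap in the framework you propose. A planar profile $\ol\vp(\nu\cdot x-ct+M)$ can dominate $\1_U$ at $t=0$ only if $U$ lies (up to a bounded set) in the half-space $\{\nu\cdot x\le -M\}$, and the hypotheses do not guarantee this: take $U\subset\R^2$ to be a thickening of two half-lines at angles $\pm\theta_0\in(0,\pi/2)$ from $e$; then any $\nu$ whose half-space contains both tails satisfies $\nu\cdot e\le0$, so no such front decays along $\R^+e$. Taking a minimum over several $\nu$'s does not help, since each member of the family must already dominate $\1_U$. Your idea of peeling off a bounded piece of $U$ and controlling it via~\eqref{c>c*} implicitly relies on a superposition estimate $u\le v_{\mathrm{bdd}}+v_{\mathrm{cone}}$, which is unavailable for non-concave $f$ (bistable, ignition, multistable) under Hypothesis~\ref{hyp} alone. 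And a planar profile with \emph{retracting} base point is not a supersolution: for $V(t,x)=\ol\vp(\nu\cdot x-g(t))$ with $\ol\vp$ a front of speed $c$, one gets $\partial_tV-\Delta V-f(V)=(c-g'(t))\ol\vp'$, which is nonnegative only when $g'\ge c$, i.e.\ when the front advances. The paper's device is of a different nature: Proposition~\ref{proradial} constructs, for each $c>c^*$ and $\lambda>0$, a \emph{radially symmetric} generalized supersolution $v^T$ on $[0,T]\times\R^N$ with $v^T(0,\cdot)\ge1$ outside $B_{R+cT}$ and $v^T(t,0)<\lambda$ for $t\in[0,T]$ --- a retracting sphere rather than a retracting plane. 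Centering it at any point $y$ with $\dist(y,U)\ge R+cT$ yields $u(T,y)<\lambda$ regardless of the global shape of $U$; the geometric Lemma~\ref{lem:eB} then converts~\eqref{hyp:U} into $\liminf_{\tau\to+\infty}\dist(\tau e,U)/\tau=c^*/w(e)$, which is exactly the distance estimate needed to place these spheres uniformly along $\{ste:s\ge c\}$ (Lemma~\ref{lem:BU}).
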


Several comments are in order, on formula~\eqref{FGgeneral} and on the hypotheses and conclusions of Theorem~\ref{th1}. Firstly, it follows from~\eqref{FGgeneral}-\eqref{conv} that $w(e)\geq c^*$ for all $e\in\Sph$, and that the inequality is strict if and only if there is $\xi\in\mc{U}(U)$ such that $\xi\cdot e>0$. We also observe that, for an arbitrary set $U$ satisfying $\mc{U}(U)\neq\emptyset$, formula~\eqref{FGgeneral} with the convention~\eqref{conv} can be rephrased in a more geometric way:
\Fi{FGgeometric}
w(e)\,=\,\frac{c^*}{\dist(e,\R^+\, \mc{U}(U))}\,=\,\frac{c^*}{\sin\vartheta}\,,
\Ff
where $\vartheta\in[0,\pi/2]$ is the minimum between~$\pi/2$ and the smallest angle between the direction~$e$ and the directions in~$\mc{U}(U)$ (with the convention $c^*/0=+\infty$). This immediately implies the continuity of the map $e\mapsto w(e)\in[c^*,+\infty]$ in $\Sph$. If $\mc{U}(U)=\emptyset$, then the map $e\mapsto w(e)$ is constantly equal to $c^*$, whence continuous in $\Sph$. 

Secondly, we call~\eqref{FGgeneral} a Freidlin-G\"artner type formula, since Freidlin and G\"artner~\cite{FG} were the first to characterize the spreading speeds of solutions of reaction-diffusion equations in~$\R^N$ by a variational formula. They were actually concerned with spreading speeds for solutions of $x$-dependent reaction-diffusion equations of the Fisher-KPP type \cite{F,KPP} (for which $0<f(x,u)/u\le\frac{\partial f}{\partial u}(x,0)$ for all $(x,u)\in\R^N\times(0,1)$) with $f(x,u)$ periodic with respect to~$x$: more precisely, it follows from~\cite{FG}, together with~\cite{BHN,BHN1,W}, that~\eqref{ass0} holds for solutions emerging from compactly supported initial data, with
\be\label{formuleFG}
w(e)=\inf_{\xi\in\Sph,\,\xi\cdot e>0}\ \frac{c^*(\xi)}{\xi\cdot e}
\ee
for any $e\in\Sph$, where $c^*(\xi)$ denotes the minimal speed of pulsating fronts connecting $1$ to $0$ in the direction $\xi$.\footnote{A pulsating front connecting $1$ to $0$ with speed $c$ in the direction $\xi$ is a solution $u:\R\times\R^N\to(0,1)$ which can be written as $u(t,x)=\phi(x\cdot\xi-ct,x)$, where $\phi(-\infty,x)=1$, $\phi(+\infty,x)=0$ uniformly in $x\in\R^N$, and $\phi$ has the same periodicity with respect to its second argument as the function $f$ or other coefficients of the equation, see e.g.~\cite{BH,SKT,W,X1,X2}.} Such formulas for the spreading speeds of solutions with compactly supported initial conditions have been recently extended to more general reaction terms in~\cite{R1}. Formula~\eqref{formuleFG} reveals that, for reaction-diffusion equations with spatially periodic coefficients, the spreading speed $w(e)$ may depend on the direction~$e$, even for solutions with compactly supported initial conditions~$u_0$. However, the continuity of the map~$e\mapsto w(e)$ still holds for monostable, ignition or bistable reactions~$f$, as follows from~\cite{FG,R1} and from the (semi-)continuity of the minimal or unique speeds of pulsating traveling fronts with respect to the direction, see~\cite{AG,G,R1} (but the continuity of the spreading speeds and even their existence do not hold in general when pulsating fronts connecting $1$ to $0$ do not exist anymore, see~\cite{GR}).

Thirdly, regarding the assumptions of Theorem~\ref{th1}, we first remind that Hypothesis~\ref{hyp} holds in the positive case~\eqref{positive}, in the ignition case~\eqref{ignition}, and in the bistable case~\eqref{bistable} with~$\int_0^1f(s)ds>0$. Hence Theorem~\ref{th1} yields the existence of the spreading speeds satisfying~\eqref{ass0} and~\eqref{ass}, given by~\eqref{FGgeneral}-\eqref{conv}, as soon as the initial datum~$u_0=\1_U$ is associated with a set~$U\subset\R^N$ satisfying $U_\rho\neq\emptyset$ and~\eqref{hyp:U}. Moreover, in the case of a positive nonlinearity satisfying~\eqref{HTconditions}, for which the hair trigger effect holds, it suffices that such geometric conditions on~$U$ hold with~$\rho>0$ arbitrarily small. On the other hand, we point out that the conclusions of Theorem~\ref{th1} do not hold in general without Hypothesis~\ref{hyp}: for instance, for a tristable function $f$ of the type~\eqref{tristable} with $c_1>c_2$ (where~$c_1$ and~$c_2$ are the positive speeds of the traveling fronts $\vp_1(x-c_1t)$ and $\vp_2(x-c_2t)$ connecting $\beta$~to~$0$, and~$1$ to~$\beta$, respectively), then the solutions $u$ of~\eqref{homo}-\eqref{defu0} with $U$ bounded (hence,~\eqref{hyp:U} is satisfied) and $U_\rho\neq\emptyset$, develop into a terrace of expanding fronts with speeds $c_1$ and $c_2$: $\inf_{B_{ct}}u(t,\cdot)\to1$ as $t\to+\infty$ if $0<c<c_2$ (resp. $\sup_{B_{c''t}\setminus B_{c't}}|u(t,\cdot)-\beta|\to0$ as $t\to+\infty$ if $c_2<c'<c''<c_1$, resp. $\sup_{\R^N\setminus B_{ct}}u(t,\cdot)\to0$ as $t\to+\infty$ if $c>c_1$), see~\cite{DM2,DGM,FM,P2}. Hence, the existence of $w(e)$ satisfying~\eqref{ass0} fails.

Finally, let us comment on the geometric assumption~\eqref{hyp:U}, which is readily seen to be invariant under rigid transformations of~$U$. Some sufficient conditions for the validity of~\eqref{hyp:U} are given in Proposition~\ref{pro:hypU} below. We also point out that $\mc{U}(U_\rho)$ cannot be replaced by $\mc{U}(U)$ in~\eqref{hyp:U}, see Remark~\ref{remhypU} below. Here are some sufficient conditions and examples to have that a direction $\xi$ belongs to~$\mc{U}(U_\rho)$:
\begin{itemize}
\item $U\cup B_R\supset\mc{C}$, for some $R>0$ and some open cone $\mc{C}$ containing the ray $\R^+\{\xi\}$, or, more generally, for a half-cylinder $\mc{C}$ with axis $\xi$ and whose section orthogonal to $\xi$ contains an $(N-1)$-dimensional ball of radius~$\rho$;
\item $U$ satisfies the uniform interior sphere condition of radius $\rho$ (that is, for every $p\in\partial U$, there is $a\in U$ such that $|a-p|=\rho$ and $B_{\rho}(a)\subset U$) and it is {\em strongly unbounded} in the direction $\xi$, in the sense that~$U+B_R\supset\R^+\{\xi\}$ for some $R>0$;
\item $U\cup B_R\,\supset\,\bigcup_{n\in\N} B_{\rho}(n^\alpha \xi)$ for some $R>0$ and $\alpha>0$ (observe that, when $\alpha>1$,~the distance between two consecutive centers is $|(n + 1)^\alpha\xi - n^\alpha\xi|\sim \alpha n^{\alpha-1}\!\to\!+\infty$ as~$n\!\to\!+\infty$).
\end{itemize}

Our second main result asserts the uniformity of the limits~\eqref{ass0} with respect to the directions $e\in\Sph$.

\begin{theorem}\label{th2}
Under the same assumptions and notations as in Theorem~$\ref{th1}$, it holds that, for any compact set $C\subset\R^N$,
\Fi{ass-cpt}\left\{\baa{lll}
\displaystyle\lim_{t\to+\infty}\,\Big(\min_{x\in C}u(t, tx)\Big)\!&\! =1 & \text{if }\;C\subset\mc{W},\vspace{3pt}\\
\displaystyle\lim_{t\to+\infty}\,\Big(\max_{x\in C}u(t,tx)\Big)\!&\!=0 & \text{if }\;C\subset\R^N\setminus\ol{\mc{W}},\eaa\right.
\Ff
where $\mc{W}$ is the envelop set of the function $w:\Sph\to[c^*,+\infty]$ defined by~\eqref{FGgeneral}-\eqref{conv}, i.e.
\be\label{asspre}
\W:=\big\{re\,:\,e\in \Sph,\ \ 0\leq r<w(e)\big\},
\ee
which has the following expression:
\be\label{asspre-formula}
\W=\R^+\, \mc{U}(U)\,+\,B_{c^*}
\ee
$($with the convention that \,$\R^+\emptyset\,+\,B_{c^*}=B_{c^*}$$)$.
\end{theorem}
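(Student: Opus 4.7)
\textbf{Step 1: Geometric formula \eqref{asspre-formula}.} I first verify the identity $\mc{W}=\R^+\mc{U}(U)+B_{c^*}$. Since $\mc{U}(U)$ is closed in $\Sph$, the cone $\R^+\mc{U}(U)$ is closed in $\R^N$ (a sequence $r_n\xi_n$ with $r_n\ge 0$, $\xi_n\in\mc{U}(U)$ converging in $\R^N$ must have bounded $r_n$, then extract a convergent subsequence). Hence for every $e\in\Sph$ and $r\ge0$, elementary geometry yields $\dist(re,\R^+\mc{U}(U))=r\sin\vartheta$, with $\vartheta\in[0,\pi/2]$ as in \eqref{FGgeometric}. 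Thus $re\in\mc{W}\sse r<w(e)\sse r\sin\vartheta<c^*\sse re\in\R^+\mc{U}(U)+B_{c^*}$, the degenerate cases ($\mc{U}(U)=\emptyset$, $r=0$) being handled by the conventions in \eqref{conv}. In particular $\mc{W}$ is open and $\overline{\mc{W}}=\R^+\mc{U}(U)+\overline{B_{c^*}}=\{y\in\R^N:\dist(y,\R^+\mc{U}(U))\le c^*\}$.

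\textbf{Step 2: First limit in \eqref{ass-cpt}.} A crucial preliminary is that, under \eqref{hyp:U}, one actually has $\mc{U}(U)=\mc{U}(U_\rho)$: the inclusion $\mc{U}(U_\rho)\subseteq\mc{U}(U)$ is trivial, and since $\mc{U}(U)\cap\mc{B}(U)=\emptyset$ (from the comparison of $\liminf$ with $\lim$), assumption \eqref{hyp:U} forces $\mc{U}(U)\subseteq\Sph\setminus\mc{B}(U)\subseteq\mc{U}(U_\rho)$. Given a compact $C\subset\mc{W}$, by continuity and Step~1 there is $\e\in(0,c^*)$ with $\dist(x,\R^+\mc{U}(U))\le c^*-3\e$ for every $x\in C$. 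Arguing by contradiction, suppose $t_n\to+\infty$ and $x_n\in C$ with $u(t_n,t_nx_n)\le 1-\delta$; extract $x_n\to x^*\in C$. Write $x^*=\tau^*\xi^*+y^*$ where $\tau^*\xi^*$ is a nearest point to $x^*$ in $\R^+\mc{U}(U)$ (so $\tau^*\ge0$, $\xi^*\in\mc{U}(U)=\mc{U}(U_\rho)$ when $\tau^*>0$) and $|y^*|\le c^*-3\e$. By definition of $\mc{U}(U_\rho)$, for $n$ large there exists $p_n\in U_\rho$ with $|p_n-t_n\tau^*\xi^*|\le\e t_n$ (if $\tau^*=0$ simply take any fixed $p_0\in U_\rho$, which is nonempty by hypothesis). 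Then
\[
|t_nx_n-p_n|\le\e t_n+t_n|y^*|+t_n|x_n-x^*|\le(c^*-\e)t_n
\]
for all $n$ large. Since $\theta\1_{B_\rho(p_n)}\le\1_U=u_0$, the comparison principle combined with the translation-invariance of \eqref{homo} and \eqref{c<c*} in Proposition~\ref{pro1} yields $u(t_n,t_nx_n)\to 1$, contradicting the choice of $x_n$.

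\textbf{Step 3: Second limit in \eqref{ass-cpt}.} Here $C\subset\R^N\setminus\overline{\mc{W}}$ is compact, and by Step~1, $\dist(x,\R^+\mc{U}(U))\ge c^*+2\e$ uniformly on $C$ for some $\e>0$. The task is to upgrade the pointwise-in-direction statement~\eqref{ass} of Theorem~\ref{th1} to uniformity in $x\in C$. My plan is to revisit the retracting-supersolution construction underlying Theorem~\ref{th1}: for each $x\in C$, letting $\pi_x\in\R^+\mc{U}(U)$ be a projection of $x$ and $\nu_x=(x-\pi_x)/|x-\pi_x|\in\Sph$, one builds a supersolution of \eqref{homo} of the form $\bar u_{\nu_x}(t,y)=\vp(y\cdot\nu_x-c^*t-K)$, suitably modified by a cone-retracting correction so that (i) it majorizes $\1_U$ at $t=0$ despite $U$ possibly extending unboundedly in directions $\xi\in\mc{U}(U)$ with $\nu_x\cdot\xi$ small but nonpositive, and (ii) it propagates at speed $c^*$. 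Since $(tx)\cdot\nu_x-c^*t\ge(\dist(x,\R^+\mc{U}(U))-c^*)t\ge 2\e t$, the front argument has crossed the point $tx$ by a margin $2\e t$, so $\bar u_{\nu_x}(t,tx)\to 0$; by continuity in $\nu$ and compactness of $C$, finitely many such $\nu$'s suffice, giving the uniform bound.

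\textbf{Main obstacle.} The hard step is Step~3: constructing supersolutions that majorize $\1_U$ for \emph{all} $y\in\R^N$, including where $U$ extends unboundedly in directions $\xi$ with $\nu\cdot\xi<0$ small, while still retracting at speed $c^*$ in the direction $\nu$. A pure planar front $\vp(y\cdot\nu-c^*t-K)$ does not dominate $\1_U$ at $t=0$, so one must add a cone-retracting correction; this is precisely the ``new retracting super-solutions'' announced in the introduction, and the present step relies on the machinery developed for the proof of Theorem~\ref{th1}.
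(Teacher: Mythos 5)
Your Steps~1 and~2 are correct and essentially reproduce the paper's argument: Step~1 matches the paper's derivation of \eqref{asspre-formula} from \eqref{FGgeometric}, and Step~2 (including the observation $\mc{U}(U)=\mc{U}(U_\rho)$ forced by \eqref{hyp:U}) is equivalent to the paper's use of Lemma~\ref{lem:U+B} plus a finite covering of $C$ by balls $B_{c'_x}(\tau_x\xi_x)$; you phrase it as a sequential contradiction, but the content is the same and correct.

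Step~3, however, is a genuine gap, and the one the whole theorem hinges on. First, your proposed supersolution $\vp(y\cdot\nu_x - c^*t - K)$ is a planar front, which at $t=0$ only dominates $\1_U$ if $U$ lies entirely in a half-space $\{y\cdot\nu_x\le K\}$. For a general unbounded $U$ this is false, and no ``cone-retracting correction'' added to a planar front will fix it, because $U$ can be unbounded in directions $\xi$ with $\nu_x\cdot\xi>0$ as well (e.g.\ $U$ a full cone around a direction close to $x/|x|$). The paper's actual device is Proposition~\ref{proradial}: a family $(v^T)_{T>0}$ of radially symmetric generalized supersolutions built from the super-critical front profile $\ol\vp$ of Proposition~\ref{pro:fronts}~(ii), whose level sets are \emph{spheres} of radius $\approx R+ct$ at time~$0$ shrinking to radius~$0$ at time~$T$, with speed $c>c^*$ strictly. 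One then shows via Lemma~\ref{lem:eB} (which relies on \eqref{hyp:U}) that $\dist(\tau e,U)\ge k\tau$ for $e\in\mc{B}(U)$ and a suitable $k>0$, so that the large sphere fits inside $\R^N\setminus U$ at time~$0$ centered near $ty$; this is Lemma~\ref{lem:BU}. Second, your plan ``relies on the machinery developed for the proof of Theorem~\ref{th1}'' — but in the paper Theorem~\ref{th1} is derived \emph{from} Theorem~\ref{th2}, so invoking Theorem~\ref{th1}'s proof here would be circular. The correct standalone inputs are Proposition~\ref{proradial}, Lemma~\ref{lem:eB}, and Lemma~\ref{lem:BU}, none of which your sketch supplies.
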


Formula~\eqref{asspre-formula} means that the envelop set $\W$  of the speeds $w(e)$'s coincides with the $c^*$-neighborhood of the positive cone generated by the directions $\mc{U}(U)$. One immediately deduces that $\mc{W}$ is an open set which is either unbounded (when $\mc{U}(U)\neq\emptyset$), or it coincides with $B_{c^*}$. For periodic Fisher-KPP equations, formula~\eqref{formuleFG} given in~\cite{BHN,BHN1,FG,W} for the spreading speeds of solutions with compactly supported initial conditions means that~$\overline{\W}$, if~$\W$ were still defined by~\eqref{asspre}, would be the Wulff shape (whence the letter~$\W$) of the envelop set of the minimal speeds $c^*(\xi)$ of pulsating fronts and, since the map $\xi\mapsto c^*(\xi)\in(0,+\infty)$ is continuous in $\Sph$ by~\cite{AG,FG,R1}, the set $\overline{\W}$ would therefore be a convex compact set. Instead, for our pro\-blem~\eqref{homo}-\eqref{defu0} under the assumptions of Theorem~$\ref{th1}$, formula \eqref{asspre-formula} shows that the set $\W$ defined in~\eqref{asspre} is not bounded as soon as $\mc{U}(U)\neq\emptyset$, and moreover that it is not convex in general. For instance, if~$U\neq\emptyset$ is a non-convex closed cone, say with vertex $0$, then~$\R^+\mc{U}(U)\cup\{0\}=U$ and thus, from~\eqref{asspre-formula},~$\W$ is not convex either (see~\eqref{Ugamma}-\eqref{hypgamma1} below for further similar examples). Never\-theless, if $U$ is a general convex set, then $\R^+\mc{U}(U)\cup\{0\}$ is convex, and $\W$ is convex too, from~\eqref{asspre-formula} again. More generally speaking, if there is a convex set $U'$ which lies at a finite Hausdorff distance from~$U$ (see~\eqref{defdH} below for a more precise definition), then $\mc{U}(U)=\mc{U}(U')$ and therefore~$\W$ is convex, even if~$U$ itself is not.

On the basis of~\eqref{ass-cpt}, we say that $\W$ is a {\em spreading set} for~\eqref{homo}-\eqref{defu0}. We point out that property~\eqref{ass-cpt} contains~\eqref{ass0}, owing to the continuity of the map $e\mapsto w(e)$ in~$\Sph$. It also yields the first line of~\eqref{ass} by taking $C$ as the segment between~$0$ and~$ce$ with $0\le c<w(e)$. Compared to the first lines of~\eqref{ass0} and~\eqref{ass}, the first line of~\eqref{ass-cpt} provides an additional uniformity with respect to the directions~$e$. It also follows from~\eqref{ass-cpt} and the continuity of the map $e\mapsto w(e)\in[c^*,+\infty]$ that, for any $\sigma\in(0,1)$ and $A>0$,
\be\label{spreadingset}
\min_{x\,\in\,\sigma\overline{\W}\cap\overline{B_A}}u(t,tx)\to1\ \hbox{ as }t\to+\infty.
\ee
Formulas similar to~\eqref{ass-cpt} have been established for the solutions of more general heterogeneous equations or systems with compactly supported initial conditions and Fisher-KPP reactions~\cite{BES,BN,ES,LM,W} or bistable reactions~\cite{X2}. The main difference is that, in these references, the spreading speeds and sets are bounded, unlike the spreading set $\W$ defined in~\eqref{asspre}-\eqref{asspre-formula}, which is unbounded as soon as $\mc{U}(U)\neq\emptyset$ (whence the use of $A$ in~\eqref{spreadingset}, for the set $\sigma\overline{\W}\cap\overline{B_A}$ to be compact). A different method based on the scaling $(t,x)=(t'/\epsilon,x'/\epsilon)$ and the limit $\epsilon\to0$ was used in~\cite{BES,ES,X2} and also in~\cite{BBS,G2} (where initial conditions which are not compactly supported can be considered). This method can however not be used for~\eqref{homo}-\eqref{defu0} since the set $U$ is not invariant by rescaling, even up to translation, unless it is a positive cone. 

Theorems~\ref{th1}-\ref{th2} were concerned with the convergence to $1$ and $0$ as $t\to+\infty$ along some rays or some dilated sets. Our next two results Theorems~\ref{th3}-\ref{th4}, the first one following actually from Theorem~\ref{th2} (as seen from its proof in Section~\ref{sec5} below), provide a description of the asymptotic shape of the upper level sets of a solution $u$, defined for $\lambda\in(0,1)$ and $t>0$ by
\be\label{defElambda}
E_\lambda(t):=\big\{x\in\R^N:u(t,x)>\lambda\big\}.
\ee
That description involves the Hausdorff distance between some sets depending on~$E_\lambda(t)$ and~$t\W$. The Hausdorff distance is defined, for any pair of subsets $A,B\subset\R^N$, by  
\be\label{defdH}
d_{\mc{H}}(A,B):=\max\Big(\sup_{x\in A}\dist(x,B),\,\sup_{y\in B}\dist(y,A)\Big),
\ee
with the conventions that $d_{\mc{H}}(A,\emptyset)=d_{\mc{H}}(\emptyset,A)=+\infty$ if $A\neq\emptyset$ and $d_{\mc{H}}(\emptyset,\emptyset)=0$. Notice that a first relation between $E_\lambda(t)$ and $t\mc{W}$ immediately follows from the continuity of the map $e\mapsto w(e)$ in $\Sph$ and from~\eqref{ass0}, provided this formula holds: for any $\lambda\in(0,1)$,
$$\begin{cases}
x\in\mc{W}\implies tx\in E_\lambda(t) \\
x\notin\ol{\mc{W}}\implies tx\notin E_\lambda(t) 
\end{cases}
\text{for large } t.$$

\begin{theorem}\label{th3}
Under the same assumptions and notations as in Theorems~$\ref{th1}$-$\ref{th2}$, it holds that, for any $R>0$ and any $\lambda\in(0,1)$,
\Fi{Hloc}
d_{\mc{H}}\Big(\ol{B_R}\cap\frac1t\, E_\lambda(t)\,,\,\ol{B_R}\cap \mc{W}\Big)\to0\ \text{ as }t\to+\infty.
\Ff
\end{theorem}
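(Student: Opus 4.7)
The plan is to establish both Hausdorff inclusions by contradiction, each time reducing to Theorem~\ref{th2}. The structural fact I would first isolate is that, thanks to~\eqref{asspre-formula}, one can rewrite $\mc{W}=\{y\in\R^N:\dist(y,\R^+\mc{U}(U))<c^*\}$ (with the convention $\mc{W}=B_{c^*}$ when $\mc{U}(U)=\emptyset$); since $\R^+\mc{U}(U)$ is a closed cone, $\mc{W}$ then enjoys the enhanced star-shape property that $y\in\ol{\mc{W}}$ and $\sigma\in[0,1)$ imply $\sigma y\in\mc{W}$. Indeed, choosing $z\in\R^+\mc{U}(U)$ realising $|y-z|\le c^*$ gives $\sigma z\in\R^+\mc{U}(U)$ and $|\sigma y-\sigma z|\le\sigma c^*<c^*$. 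This small observation is precisely what will let us handle boundary points of $\mc{W}$ that threaten to escape from $\ol{B_R}$.

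For the \emph{first inclusion} --- every $x\in\ol{B_R}\cap t^{-1}E_\lambda(t)$ lies close to $\ol{B_R}\cap\mc{W}$ --- I would argue by contradiction, extracting $\e>0$, $t_n\to+\infty$ and $x_n\to x_\infty\in\ol{B_R}$ with $u(t_n,t_nx_n)>\lambda$ and $\dist(x_n,\ol{B_R}\cap\mc{W})\ge\e$. Continuity of the distance gives $\dist(x_\infty,\ol{B_R}\cap\mc{W})\ge\e$; in particular the enhanced star-shape forces $x_\infty\notin\ol{\mc{W}}$, since otherwise $\sigma x_\infty\in\mc{W}\cap B_R\subset\ol{B_R}\cap\mc{W}$ for $\sigma\in[0,1)$ and $\sigma x_\infty\to x_\infty$ would make this distance vanish. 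Hence a small closed ball $C\ni x_\infty$ sits in $\R^N\setminus\ol{\mc{W}}$ and eventually contains $x_n$; applying the second line of~\eqref{ass-cpt} to $C$ gives $\max_C u(t,t\,\cdot\,)\to0$, contradicting $u(t_n,t_nx_n)>\lambda$.

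For the \emph{second inclusion} --- every $y\in\ol{B_R}\cap\mc{W}$ lies close to $\ol{B_R}\cap t^{-1}E_\lambda(t)$ --- I would again argue by contradiction, extracting $\e>0$, $t_n\to+\infty$, $y_n\to y_\infty\in\ol{B_R}\cap\ol{\mc{W}}$ with $\dist(y_n,\ol{B_R}\cap t_n^{-1}E_\lambda(t_n))\ge\e$, and choose $\sigma\in(0,1)$ such that $(1-\sigma)R<\e/2$. The enhanced star-shape yields $\sigma y_\infty\in\mc{W}$ with $|\sigma y_\infty|<R$, so a small closed ball $C$ around $\sigma y_\infty$ is contained in $B_R\cap\mc{W}$; the first line of~\eqref{ass-cpt} then gives $\min_C u(t,t\,\cdot\,)\to1$. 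Since $\sigma y_n\to\sigma y_\infty$ and $|\sigma y_n|\le\sigma R<R$, for large $n$ one has $\sigma y_n\in C\subset\ol{B_R}$ and $u(t_n,t_n\sigma y_n)>\lambda$; therefore $\sigma y_n\in\ol{B_R}\cap t_n^{-1}E_\lambda(t_n)$ and $\dist(y_n,\ol{B_R}\cap t_n^{-1}E_\lambda(t_n))\le(1-\sigma)|y_n|\le(1-\sigma)R<\e/2$, the sought contradiction.

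The only delicate point is the boundary case $y_\infty\in\partial\mc{W}\cap\partial B_R$ (or the analogous one in the first inclusion), where $\mc{W}$ can escape from $\ol{B_R}$ and block a direct use of Theorem~\ref{th2} near $y_\infty$. This is precisely what the enhanced star-shape is designed to circumvent, since it permits a retreat along the ray from the origin to a genuinely interior point of $\mc{W}\cap B_R$; the rest is standard compactness and extraction of subsequences.
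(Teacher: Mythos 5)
Your proof is correct. The core idea is the same as the paper's --- reduce to Theorem~\ref{th2} by producing compact sets that sit strictly inside $\mc{W}$ or strictly inside $\R^N\setminus\ol{\mc{W}}$, near the problematic boundary points --- but the execution differs in a way worth noting. The paper works directly with the continuity of the map $e\mapsto w(e)$: it defines the radially $\eta$-compressed set $K_\eta:=K\cap\{re:0\le r\le w(e)-\eta\}$ and the $\sigma$-expanded set $K'_\sigma:=K\cap\{re:r\ge w(e)+\sigma\}$, shows via covering arguments that these approximate $K\cap\mc{W}$ and $K\setminus\ol{\mc{W}}$ to within $\e$, and applies Theorem~\ref{th2} to them; this is constructive and works for any compact $K$ with $\overline{K\cap\mc{W}}=K\cap\ol{\mc{W}}$. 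You instead isolate the ``enhanced star-shape'' property ($y\in\ol{\mc{W}}$, $\sigma\in[0,1)$ $\Rightarrow$ $\sigma y\in\mc{W}$) extracted from~\eqref{asspre-formula}, argue by contradiction with a compactness extraction, and apply Theorem~\ref{th2} to small balls around the limit point; your argument is specific to $\ol{B_R}$ (you crucially use $\sigma y\in B_R$ for $|y|\le R$), but it is cleaner and makes explicit why $\ol{B_R}$ satisfies the abstract condition the paper needs. One tiny imprecision: when you write ``choosing $z\in\R^+\mc{U}(U)$ realising $|y-z|\le c^*$,'' the minimiser is a priori only in $\overline{\R^+\mc{U}(U)}=\R^+\mc{U}(U)\cup\{0\}$; the case $z=0$ is harmless (then $|y|\le c^*$, hence $\sigma y\in B_{c^*}\subset\mc{W}$), but it should be mentioned.
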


As a matter of fact, property~\eqref{Hloc} can be extended by replacing $\ol{B_R}$ with any compact set 
$K\subset\R^N$ satisfying $\overline{K\cap\mc{W}}=K\cap\overline{\mc{W}}$, as will be seen in the proof of Theorem~\ref{th3} in Section~\ref{sec5}.

Theorem~\ref{th3} gives the approximation of $t^{-1}E_\lambda(t)$ by $\mc{W}$ {\it locally} with respect to the Hausdorff distance as $t\to+\infty$. But we point out that this convergence is not {\it global} in general, that is, the truncation by the compact set $K$ is truly needed for~\eqref{Hloc} to hold and $d_{\mc{H}}(t^{-1}E_\lambda(t),\mc{W})\not\to0$ as $t\to+\infty$ in general, even under the assumptions of Theorem~\ref{th3} (see Proposition~\ref{pro:ce3} below and also the discussion at the end of this section about the possible lack of convergence of~$t^{-1}E_\lambda(t)$). 

However, the following and last main result provides an asymptotic approximation of~$t^{-1}E_\lambda(t)$ by a {\em family of sets}, namely $t^{-1}U+B_{c^*}$, {\it globally} with respect to the Hausdorff distance. For this, we do not need the geometric assumption~\eqref{hyp:U}, but rather that the Hausdorff distance between $U$ and its positive-distance-interior $U_\rho$ is finite.

\begin{theorem}\label{th4}
Assume that Hypothesis~$\ref{hyp}$ holds, let~$c^*>0$ and $\rho>0$ be given by Proposition~$\ref{pro1}$, and let~$u$ be the solution of~\eqref{homo}-\eqref{defu0}, with $U\subset\R^N$ satisfying $U_\rho\neq\emptyset$~and
\Fi{dUrho}
d_{\mc{H}}(U,U_\rho)<+\infty.
\Ff
Then, for any $\lambda\in(0,1)$, there holds that
\Fi{dH}
d_{\mc{H}}\big( E_\lambda(t) \,,\, U+B_{c^*t}\big)=o(t)\quad\text{as }\;t\to+\infty.
\Ff
\end{theorem}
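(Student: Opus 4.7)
The Hausdorff distance decomposes symmetrically, so the claim $d_{\mc{H}}(E_\lambda(t),U+B_{c^*t})=o(t)$ splits into
\begin{itemize}
\item[(a)] $\sup_{y\in E_\lambda(t)}\dist(y,U+B_{c^*t})=o(t)$ (spreading no faster than $c^*$),
\item[(b)] $\sup_{y\in U+B_{c^*t}}\dist(y,E_\lambda(t))=o(t)$ (invasion at speed $c^*$),
\end{itemize}
which I would prove separately. Hypothesis~\eqref{dUrho} is used in~(b), while~(a) is the technical core and does not need it.

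For~(b), set $\delta:=d_{\mc{H}}(U,U_\rho)<+\infty$. For every $z\in U_\rho$, $u_0\ge\1_{B_\rho(z)}\ge\theta\,\1_{B_\rho(z)}$ and translation invariance of the sub-solution produced by Proposition~\ref{pro1} give
$$\min_{\ol{B_{(c^*-\epsilon)t}(z)}}u(t,\cdot)\ \xrightarrow[t\to+\infty]{}\ 1\quad\text{uniformly in }z\in U_\rho.$$
Given $y\in U+B_{c^*t}$, write $y=y_0+y_1$ with $y_0\in U$, $|y_1|\le c^*t$, and pick $z\in U_\rho$ with $|y_0-z|\le\delta$, so $|y-z|\le c^*t+\delta$. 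Then the point $y'$ on the segment $[z,y]$ at distance $\min(|y-z|,(c^*-\epsilon)t)$ from $z$ lies in $E_\lambda(t)$ for $t$ large and satisfies $|y-y'|\le\epsilon t+\delta$, proving~(b) since $\epsilon$ is arbitrary.

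For~(a), fix $y$ and set $d:=\dist(y,U)$; from $\1_U\le\1_{\R^N\setminus B_d(y)}$ and parabolic comparison, $u(t,y)\le W_d(t,0)$, where $W_d(t,r)$ is the radial profile of the solution of~\eqref{homo} starting from the holed indicator. The crux is an \emph{inward-propagation lemma}: for each $\lambda\in(0,1)$ and $\epsilon>0$ there exist $T,M>0$ such that $W_d(t,0)\le\lambda$ whenever $t\ge T$ and $d\ge(c^*+\epsilon)t+M$. Granted this, the assumption $u(t,y)>\lambda$ forces $d<(c^*+\epsilon)t+M$, hence $\dist(y,U+B_{c^*t})<\epsilon t+M=o(t)$, which is~(a). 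To establish the lemma, I build a radial super-solution $\bar W(t,r)=\varphi^*\!\big(d-r-(c^*+\epsilon/2)t+A\big)$ with $\varphi^*$ the minimal-speed front from Proposition~\ref{pro1}. Substituting $\bar W$ into $\partial_tw=\partial_{rr}w+(N-1)r^{-1}\partial_rw+f(w)$ and using the profile equation $(\varphi^*)''+c^*(\varphi^*)'+f(\varphi^*)=0$ together with $(\varphi^*)'<0$, the super-solution inequality collapses to $(N-1)/r\le\epsilon/2$, i.e.\ $r\ge R_0:=2(N-1)/\epsilon$; on the inner region the steady state~$1$ (and the boundary data $\bar W(t,R_0)$) control $W_d$ via the maximum principle. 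Evaluating at $r=0$ gives $W_d(t,0)\le\varphi^*((\epsilon/2)t+M+A-R_0)\le\lambda$ for $M$ and $T$ chosen appropriately. The initial ordering $\bar W(t_0,\cdot)\ge W_d(t_0,\cdot)$ on $\{r\ge R_0\}$ is arranged by starting the comparison at a short positive time $t_0$, after parabolic smoothing has rounded the step $\1_{r\ge d}$ into a profile that a suitable translate of $\varphi^*$ dominates.

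The main obstacle is the inward-propagation lemma, whose two compounding subtleties are (i) the minimal-speed profile $\varphi^*$ never attains~$1$, so no single translate of it can directly dominate a step characteristic function at $t=0$ (hence the short-time regularisation trick, or, alternatively, a patched two-layer super-solution involving the constant~$1$ on an outer collar), and (ii) the radial drift $(N-1)/r$ mildly accelerates the inward motion of the front in dimension $N\ge2$. The second point contributes an $O(\log t)$ slack to the asymptotic front position and is precisely why the theorem records an $o(t)$ rather than a uniform $O(1)$ error.
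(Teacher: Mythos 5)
Your split into the inward and outward Hausdorff estimates is exactly the paper's, and your part~(b) --- translation invariance of the spreading estimate from Proposition~\ref{pro1}, then passage from $U_\rho$ to $U$ via~\eqref{dUrho} --- mirrors Lemma~\ref{lem:U+B} and the first half of the paper's proof of Theorem~\ref{th4}. The problem lies in part~(a).

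The short-time comparison underlying your inward-propagation lemma cannot hold as stated. The minimal-speed profile $\varphi^*$ is strictly below $1$ and approaches $1$ only at an \emph{exponential} rate as its argument tends to $-\infty$ (linearize the traveling-wave ODE around $1$). By contrast, the solution $W_d(t_0,\cdot)$ emerging from the step $\1_{\{r\ge d\}}$ approaches $1$ at a \emph{Gaussian} rate as $r\to+\infty$ at any fixed $t_0>0$, since the deviation from $1$ satisfies a heat equation with bounded zero-order term and compactly supported data. Thus $1-W_d(t_0,r)<1-\bar W(t_0,r)$, i.e.\ $W_d(t_0,r)>\bar W(t_0,r)$, for all $r$ large, regardless of the translation constant $A$. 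Parabolic smoothing is not a fix here; it is the cause. This is exactly why the paper's Proposition~\ref{proradial} does \emph{not} use the minimal front: it uses the profile $\ol\varphi$ from Proposition~\ref{pro:fronts}(ii), associated with a super-minimal speed $c''>c^*$ and normalized so that $\ol\varphi(0)=1$ with $\ol\varphi(z)=Ae^{-cz}+1-A>1$ for $z<0$ (where $f\equiv0$). This yields $v^T(0,x)\geq 1$ for $|x|\ge R+cT$ (first line of~\eqref{vT10}), so the initial ordering against any $u_0\le1$ is immediate and no smoothing step is needed.

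Your treatment of the ball $\{r<R_0\}$ is also circular as written: the supersolution inequality for $\bar W$ is only verified on $\{r\ge R_0\}$, yet you invoke the boundary value $\bar W(\cdot,R_0)$ as an upper barrier for $W_d$ at $r=R_0$, which already presupposes $\bar W\ge W_d$ on $\{r\ge R_0\}$ --- precisely what is to be proved. The paper handles the center by pairing $\ol\varphi$ with the steeper auxiliary profile $\psi=\ol\varphi\,e^{-\beta(\cdot-Z)}$ and taking the minimum, arranged so that the resulting function has vanishing radial derivative and positive second derivative at the innermost point (equations~\eqref{v't0}--\eqref{v''t0}); only then is the flat extension into $B_{R'}$ a genuine supersolution. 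Any correct version of your lemma must incorporate both a profile exceeding $1$ in the far field and a device of this kind at the center.
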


Property~\eqref{dH} means that $E_\lambda(t)$ behaves at large time $t$ as the set $U$ thickened by~$c^*t$. A~sufficient condition for~\eqref{dUrho} to hold is that the set $U$ fulfills the uniform interior sphere condition of radius~$\rho$: in such a case $d_{\mc{H}}(U,U_\rho)\leq2\rho$. In particular, if~$f$ satisfies condition~\eqref{HTconditions} ensuring the hair trigger effect, then Theorem~\ref{th4} applies to any  non-empty set $U$ which is uniformly $C^{1,1}$.  

We point out that a single formula like~\eqref{dH} valid for all $\lambda\in(0,1)$ does not hold in general without Hypo\-thesis~\ref{hyp}. For instance, consider the same counter-example as the one mentioned after Theorem~\ref{th1}, namely, take a tristable function $f$ of the type~\eqref{tristable} with $c_1>c_2$ (where~$c_1$ and~$c_2$ are the positive speeds of the traveling fronts $\vp_1(x-c_1t)$ and $\vp_2(x-c_2t)$ connecting~$\beta$~to~$0$, and~$1$ to~$\beta$, respectively). Then, as follows from~\cite{DM2,DGM,FM,P2}, the solutions~$u$ of~\eqref{homo}-\eqref{defu0} with $U$ bounded and $U_\rho\neq\emptyset$ (hence,~\eqref{dUrho} is satisfied) are such that $d_{\mc{H}}\big(E_\lambda(t),U+B_{c_2t}\big)=o(t)$ as $t\to+\infty$ if $\beta<\lambda<1$, respectively $d_{\mc{H}}\big(E_\lambda(t),U+B_{c_1t}\big)=o(t)$ as $t\to+\infty$ if $0<\lambda<\beta$ (the behavior of $E_{\lambda}(t)$ when $\lambda=\beta$ is actually unclear).

\subsubsection*{Common comments and counter-examples to Theorems~\ref{th1}-\ref{th4}}

When $U$ is bounded in~\eqref{defu0}, then $\mc{U}(U)=\emptyset$, $\mc{B}(U)=\Sph$, hence~\eqref{hyp:U} is automatically fulfilled, as is~\eqref{dUrho} provided $U_\rho\neq\emptyset$. Theorems~\ref{th1}-\ref{th4}, which imply that \eqref{ass0}, \eqref{ass}, \eqref{ass-cpt}, \eqref{Hloc} and~\eqref{dH} hold with~$w(e)\equiv c^*$ in $\Sph$ and $\mc{W}=B_{c^*}$, can then be viewed in that case as a consequence of Proposition~\ref{pro1}. 

Now, in the class of unbounded sets $U$, a particular class is that of periodic sets, or sets $U$ containing periodic sets. Namely, consider $r,R>0$ and assume that
$$U\supset\bigcup_{k\in\Z^N}B_r(Rk).$$
Under Hypothesis~\ref{hyp}, and with $\rho>0$ given by Proposition~\ref{pro1}, if $r\ge\rho$ then Proposition~\ref{pro1} and the periodicity of $u(t,\cdot)$ imply that $u(t,\cdot)\to1$ as $t\to+\infty$ uniformly in~$\R^N$ (notice also that, here, $\mc{U}(U_\rho)=\Sph$, $w(e)=+\infty$ for all $e\in\Sph$, and $\W=\R^N$). On the other hand, the uniform convergence to $1$ of the solutions $u$ of~\eqref{homo}-\eqref{defu0} does not hold in general for periodic sets $U$. Consider for instance a bistable function~$f$ of the type~\eqref{bistable} and assume that
$$U\subset\bigcup_{k\in\Z^N}B_r(Rk).$$
Then $0\le u(1,x)\le e^L\int_{B_r}G_1(x,y)dy$ for all $x\in\R^N$, with $L=\max_{[0,1]}|f'|$ and $G_1(x,y)=(4\pi)^{-N/2}\sum_{k\in\Z^N}e^{-|x-Rk-y|^2/4}$. Since the function $G_1$ is bounded in $\R^N\times\R^N$ (being continuous and periodic), one gets that $0\le u(1,\cdot)\le e^L\|G_1\|_{L^\infty(\R^N\times\R^N)}\omega_Nr^N$ in~$\R^N$ (where $\omega_N$ is the Lebesgue measure of the unit ball $B_1$). As a consequence, for $r$ sufficiently small one has  $0\le u(1,\cdot)\le\alpha/2$ in $\R^N$ (with $\alpha>0$ as in~\eqref{bistable}) which readily implies that $\lim_{t\to+\infty}\|u(t,\cdot)\|_{L^\infty(\R^N)}=0$.

Subgraphs are another important class of unbounded sets $U$. While further flattening properties of level sets of solutions of~\eqref{homo}-\eqref{defu0}, when $U$ is a subgraph, and more precise estimates of their location are established in~\cite{HR3}, we here list direct applications of Theo\-rems~\ref{th1}-\ref{th4} for some specific subgraphs. Let us use $x=(x',x_N)\in\R^{N-1}\times\R$ for the generic notation of a point $x\in\R^N$, and let 
\be\label{Ugamma}
U=\big\{x\in\R^N:x_N\le\gamma(x')\big\},
\ee
with $\gamma:\R^{N-1}\to\R$ in $L^\infty_{loc}(\R^{N-1})$. Assume first that $\gamma$ is of the form
\be\label{hypgamma1}
\gamma(x')=\alpha\,|x'|+o(|x'|) \quad\text{as }\;|x'|\to+\infty,
\ee
for some $\alpha\in\R$, which is for instance the case if $\gamma\in C^1(\R^{N-1})$ and $\nabla\gamma(x')\.x'/|x'|\to\alpha$ as $|x'|\to+\infty$. We see that $U_\rho\neq\emptyset$ for any $\rho>0$ and that
$$\mc{B}(U)=\big\{e\in\Sph:e_N>\alpha|e'|\big\},\quad\mc{U}(U)=\mc{U}(U_\rho)=\big\{e\in\Sph:e_N\leq\alpha|e'|\big\}.$$
Thus~\eqref{hyp:U} is fulfilled and Theorems~\ref{th1}-\ref{th3} entail the validity of~\eqref{ass0},~\eqref{ass},~\eqref{ass-cpt} and~\eqref{Hloc} under Hypothesis~\ref{hyp} on~$f$. However, the shape of the spreading set $\mc{W}$ given by~\eqref{asspre-formula} is completely different according to the sign of $\alpha$. Namely, if $\alpha>0$ then $\mc{W}=\{x\in\R^N:x_N< \alpha\,|x'|+c^*\sqrt{1+\alpha^2}\}$ --~which is simply a translation of the interior of the cone $\R^+ \mc{U}(U)$~-- hence  $\mc{W}$ is non-convex and not $C^1$. If~$\alpha<0$ then the spreading set~$\mc{W}$ is still given by the $c^*$-neighborhood of the same cone $\R^+\mc{U}(U)$, which now becomes ``rounded'' in its upper part, namely~$w(e)=c^*$ if~$e_N\ge|e'|/|\alpha|$, and moreover $\mc{W}$ is convex and of class~$C^1$ (but not $C^2$). Finally, if~$\alpha=0$ (which includes the case $\gamma$ bounded) then $\mc{W}$ is given by the half-space $\{x\in\R^N:x_N<c^*\}$, i.e.~$w(e)=+\infty$ if~$e_N\le0$, and~$w(e)=c^*/e_N$ if~$e_N>0$. In the case where $\gamma$ satisfies
\be\label{hypgamma2}
\frac{\gamma(x')}{|x'|}\to-\infty\ \hbox{ as $|x'|\to+\infty$},
\ee
then $U_\rho\neq\emptyset$ for any $\rho>0$, $\mc{B}(U)\!=\!\Sph\setminus\{(0,\cdots,0,-1)\}$, $\mc{U}(U)\!=\!\mc{U}(U_\rho)\!=\!\{(0,\cdots,0,-1)\}$. Here again~\eqref{hyp:U} is fulfilled and therefore, under Hypothesis~\ref{hyp}, the conclusions of Theorem~\ref{th1}-\ref{th3} hold with $\Sph\ni e\mapsto w(e)$ having the envelop
$$\mc{W}=\R^+(0,\cdots,0,-1)+B_{c^*}=\big\{x\in\R^N:|x'|<c^*,\ x_N\leq0\big\}\cup B_{c^*}.$$
This is a cylinder with a ``rounded'' top, which is convex and $C^1$, but not $C^2$. Lastly, if $\gamma$ in~\eqref{Ugamma} is assumed to have uniformly bounded local oscillations, that is, if
\be\label{hypgamma3}
\sup_{x',y'\in\R^{N-1},\,|x'-y'|\le1}|\gamma(x')-\gamma(y')|<+\infty,\footnote{Condition~\eqref{hypgamma3} is fulfilled for instance if $\gamma$ is globally Lipschitz-continous; it may or may not be fulfilled if $\gamma$ satisfies~\eqref{hypgamma1}; it is not fulfilled if $\gamma$ satisfies~\eqref{hypgamma3}.}
\ee
then condition~\eqref{dUrho} is fulfilled, and Theorem~\ref{th4} then yields~\eqref{dH}.

To complete this section, we list some situations where one or both hypotheses~\eqref{hyp:U} and~\eqref{dUrho} of Theorems~\ref{th1}-\ref{th4} do not hold and the conclusions~\eqref{ass0},~\eqref{ass},~\eqref{ass-cpt}, \eqref{Hloc} and~\eqref{dH} fail (the examples will then also show that the conditions~\eqref{hyp:U} and~\eqref{dUrho} on~$U$ cannot be compared in general). We also further discuss the validity of the following convergences:
\Fi{Hausdorff}
\lim_{t\to+\infty}\frac1t\, E_\lambda(t)=\mc{W}=\lim_{t\to+\infty}\frac1t\,U+B_{c^*},
\Ff
that one may expect to hold but that actually fail in general.  The above convergences~\eqref{Hausdorff} would be understood with respect to the Hausdorff distance (which, we point out, does not guarantee the uniqueness of the limit). The precise counter-examples, which are given in Section~\ref{sec6}, will enlighten the sharpness of our results. We first observe that, whenever~\eqref{hyp:U} is fulfilled, together with $U_\rho\neq\emptyset$ and Hypothesis~\ref{hyp}, then~\eqref{Hloc} holds and therefore the limit of $t^{-1}E_\lambda(t)$, if any, must coincide with the set $\mc{W}$ (in the sense that the Hausdorff distance between the limit set and $\mc{W}$ must be~$0$). All of the following instances refer to the equation~\eqref{homo} with logistic term $f(u)=u(1-u)$, for which Hypothesis~\ref{hyp} holds, as well as the hair trigger effect, i.e., $\theta\in(0,1)$ and $\rho>0$ can be arbitrary in Proposition~\ref{pro1}. Then~\eqref{hyp:U} and~\eqref{dUrho} are understood here with~$\rho>0$ arbitrarily~small.

\begin{itemize}
\item There exists $U$, with non-empty interior, violating~\eqref{hyp:U}, but fulfilling~\eqref{dUrho} (hence~\eqref{dH} holds), for which \eqref{ass0}, \eqref{ass},~\eqref{ass-cpt} and~\eqref{Hloc} all fail, for any function $w:\Sph\to[0,+\infty]$ and any star-shaped, open set~$\mc{W}\subset\R^N$, and moreover both limits in~\eqref{Hausdorff} do not exist (see Proposition~\ref{pro:ce1}).		
\item There exists $U$, with non-empty interior, violating~\eqref{dUrho}, but fulfilling~\eqref{hyp:U} (hence~\eqref{ass0},~\eqref{ass},~\eqref{ass-cpt} and~\eqref{Hloc} hold), for which~\eqref{dH} fails and the first limit in~\eqref{Hausdorff} exists whereas the second one does not (see Proposition~\ref{pro:ce1bis}).	
\item There exists $U$, with non-empty interior, violating both~\eqref{hyp:U} and~\eqref{dUrho}, for which~\eqref{ass0},~\eqref{ass},~\eqref{ass-cpt},~\eqref{Hloc} and~\eqref{dH} all fail, with $w(e)$ and $\W$ given by~\eqref{FGgeneral}-\eqref{conv} and~\eqref{asspre}, and the two limits in~\eqref{Hausdorff} exist but do not coincide (see Proposition~\ref{pro:ce2}).
\item There exists $U$, with non-empty interior, fulfilling~\eqref{hyp:U} and~\eqref{dUrho} (hence \eqref{ass0}, \eqref{ass}, \eqref{ass-cpt}, \eqref{Hloc} and~\eqref{dH} all hold), for which~both limits in~\eqref{Hausdorff} do not exist and $d_{\mc{H}}(t^{-1}E_\lambda(t),\mc{W})=+\infty$ for all $\lambda\in(0,1)$ and $t>0$ (see Proposition~\ref{pro:ce3}).	
\end{itemize}

\subsubsection*{Outline of the paper}

Proposition~\ref{pro1}, together with some other auxiliary results on planar fronts and on the notion of invasion property, are shown in Section~\ref{sec3}. In Section~\ref{sec4} we show the convergences towards $1$ and towards $0$ on some sets of points. The former is directly deduced from Proposition~\ref{pro1}, while, for the latter, we make use of a family of supersolutions, constructed on the basis of the results of Section~\ref{sec3}, whose level sets are retracting spheres. The results of Section~\ref{sec4} are applied in Section~\ref{sec5} to prove Theorems~\ref{th1}-\ref{th4} on the spreading speeds and sets for general initial supports $U$. We finally exhibit in Section~\ref{sec6} some counter-examples when the geometric assumptions~\eqref{hyp:U} and~\eqref{dUrho} of Theorems~\ref{th1}-\ref{th4} are not satisfied.
	
			
\section{Preliminary considerations on planar fronts, and proof of Proposition~\ref{pro1}}\label{sec3}

This section is devoted to the proof of Proposition~\ref{pro1}, together with other auxi\-liary results on planar traveling fronts. We start with the definition of the ``invasion property", which means that the steady state~$1$ attracts the solutions of~\eqref{homo} --~not necessarily satisfying~\eqref{defu0}~-- that are initially ``large enough" in large balls: 

\begin{definition}\label{hyp:invasion}
We say that the {\rm invasion property} holds for~\eqref{homo} if there exist $\theta\in(0,1)$ and $\rho>0$ such that, if $u_0$ fulfills~\eqref{hyptheta} for some $x_0\in\R^N$, then the solution $u$ of~\eqref{homo} with initial condition $u_0$ satisfies $u(t,\cdot)\to1$ as $t\to+\infty$, locally uniformly in $\R^N$.
\end{definition}

From~\cite{AW,FM} (see also the comments after Proposition~\ref{pro1}), the invasion property is known to hold if $f$ is of the ignition type~\eqref{ignition}, or if $f>0$ in $(0,1)$ (as a consequence of the previous case and the maximum principle, by putting below $f$ a function of the ignition type), or if $f$ is of the bistable type~\eqref{bistable} with $\int_0^1f(s)ds>0$, or if $f$ is of the tristable case~\eqref{tristable} with $\int_0^\beta f(s)ds>0$ and $\int_\beta^1f(s)ds>0$. From~\cite{AW} and the maximum principle, the invasion property holds for every $\theta\in(0,1)$ and $\rho>0$ (namely, the hair trigger effect) if and only $f$ is positive in $(0,1)$ and satisfies~\eqref{HTconditions}. In that case, Proposition~\ref{pro1} implies that properties \eqref{c<c*}-\eqref{c>c*} hold for any compactly supported initial datum $0\leq u_0\leq 1$ such that $u_0>0$ on a set of positive measure (one knows that, in that case, $u(1,\cdot)$ is continuous and positive in $\R^N$, hence it satisfies~\eqref{hyptheta} with $x_0=0$, for some $\theta\in(0,1)$ and~$\rho>0$).

More generally speaking, it is known that the invasion property is equivalent to some simple conditions on the function~$f$ involving the integrals $\int_t^1f$ and the positivity of $f$ in a left neighborhood of~$1$, as stated in the following proposition.

\begin{proposition}[\cite{DP,P2}]\label{pro:hyp1}
For a $C^1([0,1])$ function $f$ such that $f(0)=f(1)=0$, the invasion property is equivalent to the following two conditions simultaneously:
\be\label{hyp:theta}
\exists\,\theta\in(0,1),\quad f>0\hbox{ in }[\theta,1),
\ee
and
\be\label{intf}
\forall\,t\in[0,1),\quad\int_t^1f(s)\,ds>0.
\ee
Furthermore, $\theta$ can be chosen as the same real number in Definition~$\ref{hyp:invasion}$ and in~\eqref{hyp:theta}.
\end{proposition}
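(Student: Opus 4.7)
The proof is an equivalence, and I would split it into necessity and sufficiency, each direction being classical with a trail back to \cite{DP, P2}.

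\medskip

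\textbf{Necessity.} Assume the invasion property holds with parameters $\theta$ and $\rho$. For \eqref{hyp:theta} with the same $\theta$, take as initial datum the constant function $u_0\equiv s$ with $s\in[\theta,1)$. One then has $\theta\,\1_{B_\rho(x_0)}\le u_0\le 1$, so the invasion property forces $u(t,\cdot)\to1$. But the solution with constant initial data is $u(t,x)=y(t)$, where $\dot y=f(y)$ and $y(0)=s$, so $y(t)\to1$ as $t\to+\infty$. Since this must hold for every $s\in[\theta,1)$, one concludes $f>0$ throughout $[\theta,1)$. For \eqref{intf}, argue by contradiction: if $\int_{t_0}^1 f(s)\,ds\le0$ for some $t_0\in[0,1)$, then, setting $F(u):=\int_0^u f$, one has $F(t_0)\ge F(1)$. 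Working in one space dimension, a phase-plane analysis of the stationary equation $\phi''+f(\phi)=0$, with its conserved energy $\tfrac{1}{2}(\phi')^2+F(\phi)$, produces a bounded non-trivial stationary solution $\phi_\star$ with $0<\phi_\star<1$ and $\phi_\star\le t_0$ somewhere (a heteroclinic between $1$ and $t_0$ when $F(t_0)=F(1)$, or a bounded non-monotone orbit when $F(t_0)>F(1)$). Reading $\phi_\star$ as a planar supersolution of \eqref{homo} in $\R^N$ yields a time-independent barrier that keeps the solution strictly below $1$ on a half-space, contradicting the invasion property once one arranges an admissible initial datum lying below this barrier.

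\medskip

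\textbf{Sufficiency.} Conversely, assume \eqref{hyp:theta} and \eqref{intf}. Condition \eqref{intf} is equivalent to $F(1)>F(t)$ for all $t\in[0,1)$, which is exactly the potential-positivity needed to build, by shooting on $\phi''+\tfrac{N-1}{r}\phi'+f(\phi)=0$ with $\phi'(0)=0$ and $\phi(R)=0$, a family of radially symmetric, non-negative, compactly supported stationary subsolutions $V_R$ on $B_R$ with $\max V_R\to1$ as $R\to+\infty$. Picking $R$ large and, if necessary, tuning the shooting parameter so that $V_R\le\theta$ pointwise, the parabolic comparison principle gives $u(t,\cdot)\ge V_R(\cdot-x_0)$ for every solution $u$ of \eqref{homo} with $u_0\ge\theta\,\1_{B_\rho(x_0)}$ and $\rho\ge R$. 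Since $V_R$ is a strict subsolution, the solution starting from $V_R$ is time-increasing, and its $\omega$-limit is a bounded stationary solution $U_\infty\ge V_R$; \eqref{hyp:theta}, together with the absence of any zero of $f$ in $[\theta,1)$, forces $U_\infty\equiv1$ (via a standard sliding/comparison argument in $\R^N$), yielding the required local uniform convergence $u(t,\cdot)\to1$ and hence the invasion property with the prescribed $\theta$ and $\rho$ large enough.

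\medskip

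The main obstacle, to my mind, is the necessity of \eqref{intf}: in contrast with \eqref{hyp:theta}, which is obtained purely from constant-data testing, \eqref{intf} is a genuinely non-local condition on $f$ whose failure must be converted into a stationary barrier via 1D phase-plane analysis. Handling the two sub-cases $\int_{t_0}^1 f=0$ and $\int_{t_0}^1 f<0$ on the same footing, and arranging an admissible initial datum lying below the resulting barrier while still dominating $\theta\,\1_{B_\rho(x_0)}$ for some $x_0$, is the delicate technical step. A secondary point is the simultaneous matching of the constant $\theta$ in both directions, which is automatic here because the test family $u_0\equiv s$ sweeps the whole interval $[\theta,1)$ and the comparison with $V_R$ preserves the level $\theta$.
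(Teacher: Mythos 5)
The paper does not actually prove Proposition~\ref{pro:hyp1}: it refers to \cite[Proposition~2.12]{P2} for the forward implication and to \cite[Lemma~2.4]{DP} for the converse, so your self-contained sketch is a genuinely different route. However, the sufficiency direction as written contains a gap. You ask the compactly supported radial subsolutions $V_R$ to satisfy $\max V_R\to1$ as $R\to+\infty$ (this is what eventually forces the $\omega$-limit $U_\infty$ into the range where (hyp:theta) applies), and simultaneously you ``tune the shooting parameter so that $V_R\le\theta$ pointwise'' to guarantee $\theta\1_{B_\rho(x_0)}\ge V_R(\cdot-x_0)$ at $t=0$. These two requirements are incompatible once $\theta<1$, and the second defeats the first: if $\max V_R\le\theta$, the bound $U_\infty\ge V_R$ does not place $U_\infty$ in $[\theta,1]$, so (hyp:theta) cannot rule out stationary states of bistable type sitting below $\theta$. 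The repair, which is essentially the content of \cite[Lemma~2.4]{DP}, is to run time forward before comparing with $V_R$: from (hyp:theta), the ODE $\dot y=f(y)$, $y(0)=\theta$, converges to $1$, and by parabolic comparison with ever larger balls, for any $T>0$ the solution $u$ with $u_0\ge\theta\1_{B_\rho(x_0)}$ and $\rho=\rho(T)$ large satisfies $u(T,\cdot)>\max V_R$ on $B_R(x_0)$; one then compares with $V_R(\cdot-x_0)$ from time $T$ onwards, not from $0$. This is also precisely how the paper's ``same $\theta$'' refinement is achieved.

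Two smaller remarks on the necessity direction. Your argument for (hyp:theta) via constant initial data is correct and elementary. For (intf), your two sub-cases can be merged: once (hyp:theta) is known, $F:=\int_0^\cdot f$ is strictly increasing on $[\theta,1]$, so if $F(t_0)\ge F(1)$ for some $t_0\in[0,1)$ then necessarily $t_0<\theta$, and taking the largest $s^*\in[t_0,1)$ with $F(s^*)=F(1)$ gives $F<F(1)$ on $(s^*,1)$ and $s^*<\theta$. This always yields a heteroclinic $\phi_\star$ of $\phi''+f(\phi)=0$ from $1$ (at $-\infty$) to $s^*$ (at $+\infty$), with $\phi_\star>\theta$ on a half-line. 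Placing $\theta\1_{B_\rho(x_0)}$ far enough in that half-line puts an admissible initial datum under the stationary barrier $\phi_\star(x_1)$, and $\phi_\star<1$ pointwise gives the contradiction; the ``bounded non-monotone orbit'' alternative you mention is never needed.
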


The fact that the invasion property implies \eqref{hyp:theta}-\eqref{intf} is a consequence of~\cite[Proposition~2.12]{P2} and the converse implication follows from \cite[Lemma~2.4]{DP}. In particular, the invasion property is satisfied if $f\ge0$ in $[0,1]$ and if condition~\eqref{hyp:theta} holds. We also point out that Proposition~\ref{pro:hyp1} implies that the invasion property only depends on~$f$ and not on the dimension~$N$, whereas, for a function $f$ which is positive in $(0,1)$, the hair trigger effect (that is, the arbitrariness of $\theta\in(0,1)$ and $\rho>0$ in the invasion property) depends on~$N$ (for instance, for the function $f$ defined by $f(s)=s^p(1-s)$, with $p\ge1$, the invasion property holds in any dimension $N\ge1$, but the hair trigger effect holds if and only if $p\le1+2/N$, see~\cite{AW}).

The next three lemmas are part of the proof of Proposition~\ref{pro1}. The first one contains some standard properties of traveling fronts. We give its proof for the sake of completeness.

We recall that a traveling front connecting $1$ to~$0$ is a solution to~\eqref{homo} of the form $\varphi(x\cdot e-ct)$, for some $e\in\Sph$, $c\in\R$ and $\varphi:\R\to(0,1)$ satisfying $\varphi(-\infty)=1$ and $\varphi(+\infty)=0$. Namely, the profile $\vp\in C^2(\R)$ satisfies
\Fi{TF}
\begin{cases}
\varphi''(z)+c\varphi'(z)+f(\varphi(z))=0, &z\in\R\\
\varphi(-\infty)=1>\vp(z)>\varphi(+\infty)=0, &z\in\R.
\end{cases}
\Ff 

\begin{lemma}\label{lem:phi'<0}
Let $\vp\in C^2(\R)$ and $c\in\R$ satisfy~\eqref{TF}. Then $\vp'<0$ in~$\R$. Moreover $c>0$ if and only if~$\int_0^1f(s)\,ds>0$.
\end{lemma}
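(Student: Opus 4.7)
The plan is to derive both conclusions from the energy identity obtained by multiplying the ODE \eqref{TF} by $\varphi'$. Before doing so I would first establish that $\varphi'(\pm\infty)=0$, a standard ODE-compactness fact: for any sequence $z_n\to-\infty$, the shifts $\varphi(\cdot+z_n)$ are bounded in $C^2_{\mathrm{loc}}$ (from \eqref{TF} and the boundedness of $\varphi$ and $f$), so by Arzel\`a-Ascoli a subsequence converges in $C^1_{\mathrm{loc}}$ to some $\varphi_\infty$ solving the same ODE with $\varphi_\infty(0)=1$. Since $0\le\varphi_\infty\le 1$, uniqueness for the first-order phase-plane system $(X,Y)'=(Y,\,-cY-f(X))$ at the equilibrium $(1,0)$ forces $\varphi_\infty\equiv 1$, whence $\varphi'(z_n)=\varphi_\infty'(0)=0$ along the subsequence, and in fact along the whole sequence; the case $z_n\to+\infty$ is analogous through the equilibrium $(0,0)$.

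Setting $F(s):=\int_0^s f(t)\,dt$, the ODE yields the pointwise identity
\[
\Bigl(\tfrac12\varphi'(z)^2+F(\varphi(z))\Bigr)'\;=\;-c\,\varphi'(z)^2 ,
\]
and integrating over $\R$ using the limits just obtained gives
\[
\int_0^1 f(s)\,ds \;=\; F(1)\;=\;c\!\int_{\R}\varphi'(z)^2\,dz ,
\]
where the right-hand integral is finite (the primitive has finite limits at $\pm\infty$ and is monotone when $c\neq 0$; both sides vanish when $c=0$) and strictly positive since $\varphi$ is non-constant. This yields the equivalence $c>0\iff\int_0^1 f(s)\,ds>0$.

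For the strict monotonicity $\varphi'<0$, I argue by contradiction, assuming $\varphi'(z_0)=0$ for some $z_0\in\R$. If additionally $f(\varphi(z_0))=0$, then $(\varphi(z_0),0)$ is a stationary point of the Lipschitz phase-plane system and uniqueness forces $\varphi\equiv\varphi(z_0)$, contradicting $\varphi(-\infty)\neq\varphi(+\infty)$. Hence $f(\varphi(z_0))\neq 0$ and $\varphi''(z_0)=-f(\varphi(z_0))\neq 0$, so every critical point of $\varphi$ is a strict local extremum and the critical set is discrete. Were $\varphi$ not strictly decreasing there would exist two consecutive critical points $z_1<z_2$, one a strict local minimum (with $f(\varphi(z_1))<0$) and the other a strict local maximum (with $f(\varphi(z_2))>0$), between which $\varphi$ is strictly monotone. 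Integrating the identity above between $z_1$ and $z_2$ gives
\[
F(\varphi(z_2))-F(\varphi(z_1))\;=\;-c\!\int_{z_1}^{z_2}\varphi'(z)^2\,dz ,
\]
and combining this with the global monotonicity of $H(z):=\tfrac12\varphi'(z)^2+F(\varphi(z))$ (non-increasing if $c\ge 0$, non-decreasing if $c\le 0$), the boundary values $H(-\infty)=F(1)$, $H(+\infty)=0$, and the sign conditions at each type of extremum, one derives a contradiction.

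The main obstacle is this final step: excluding oscillatory profiles purely from the energy identity and the phase-plane structure, without any additional assumption on $f$ such as monostability or bistability. I expect it to require a careful case analysis depending on the sign of $c$, and possibly an iteration of the local energy identity along a chain of consecutive extrema (which the preceding discussion has shown to be discrete), together with the observation that at $-\infty$ the trajectory leaves $(1,0)$ into the lower half-plane, so that the first critical point from the left must be a strict local minimum and the alternation between extrema then propagates.
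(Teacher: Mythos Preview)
Your treatment of the equivalence $c>0\iff\int_0^1 f>0$ is essentially the paper's: both integrate the ODE against $\varphi'$ over $\R$ after noting $\varphi'(\pm\infty)=0$ (the paper invokes elliptic estimates, you use a shifting/compactness argument --- either is fine).

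The gap is exactly where you locate it: the monotonicity step is not finished. Integrating the energy identity between two \emph{consecutive critical points} $z_1<z_2$ gives
\[
F(\varphi(z_2))-F(\varphi(z_1))=-c\int_{z_1}^{z_2}(\varphi')^2,
\]
and since $\varphi(z_1)\neq\varphi(z_2)$ the $F$-terms do not cancel. You then appeal to the global monotonicity of $H=\tfrac12(\varphi')^2+F(\varphi)$ and to ``sign conditions at each type of extremum'', but without structural hypotheses on $f$ (monostable, bistable, \dots) there is no a priori sign for $F(\varphi(z_2))-F(\varphi(z_1))$, and the iteration you sketch along a chain of extrema does not close up into a contradiction. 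This is a real obstruction, not a presentational one.

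The paper avoids this entirely by a different choice of integration interval. Assuming $c\ge 0$ and $\varphi$ not non-increasing, one can find $x_m<x_M<y$ with $x_m$ a local minimum, $x_M$ a local maximum, and --- crucially --- $\varphi(x_m)=\varphi(y)$ (such a $y$ exists because $\varphi$ rises above $\varphi(x_m)$ at $x_M$ and eventually tends to $0<\varphi(x_m)$). Integrating against $\varphi'$ over $[x_m,y]$ now kills both boundary contributions: $\varphi'(x_m)=0$ and the $F$-terms cancel since $\varphi(x_m)=\varphi(y)$. One is left with
\[
c\int_{x_m}^{y}(\varphi')^2=-\frac{(\varphi'(y))^2}{2}\le 0,
\]
which for $c>0$ is already a contradiction, and for $c=0$ forces $\varphi'(y)=0$, whence $(\varphi(y),\varphi'(y))=(\varphi(x_m),\varphi'(x_m))$ and Cauchy--Lipschitz makes $\varphi$ periodic, contradicting $\varphi(-\infty)\neq\varphi(+\infty)$. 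The case $c<0$ is symmetric (choose $y<x_m<x_M$ with $\varphi(y)=\varphi(x_M)$). The trick you were missing is thus to integrate not between critical points but between a critical point and a \emph{return point} to the same level, so that the potential term drops out cleanly.
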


\begin{proof}
Let $\vp\in C^2(\R)$ and $c\in\R$ satisfy~\eqref{TF}. Let us first show that the positivity of $c$ is equivalent to that of~$\int_0^1f(s)\,ds$. Standard elliptic estimates imply that $\varphi'(\pm\infty)=\varphi''(\pm\infty)=0$. Hence, integrating the equation in~\eqref{TF} against $\varphi'$ over~$\R$~yields
$$\int_{\R}c\,(\varphi'(z))^2\,dz=\int_{0}^1f(s)\,ds,$$
which shows the desired property.

Let us show now the decreasing monotonicity of $\vp$. Suppose first that $c\ge0$. Assume by contradiction that $\varphi$ is not non-increasing. Then there are $x_m<x_M<y\in\R$ such that $\varphi(x_m)=\varphi(y)<\varphi(x_M)$, and~$x_m$ and $x_M$ are respectively a point of local minimum and of local maximum for $\varphi$. Integrating the equation in~\eqref{TF} against $\varphi'$ over $[x_m,y]$ leads to:
$$c\int_{x_m}^y(\varphi')^2=-\frac{(\varphi'(y))^2}2,$$
we deduce that $\varphi'(y)=0$ and $c=0$, and thus $\varphi$ is periodic by the Cauchy-Lipschitz theorem (since $\varphi(x_m)=\varphi(y)$ and $\varphi'(x_m)=\varphi'(y)=0$). This is impossible. Therefore, $\varphi$ is non-increasing. Differentiating the equation in~\eqref{TF} and applying the strong maximum principle to $\varphi'$, one eventually gets that $\varphi'<0$ in $\R$.

In the case $c<0$, one finds $y<x_m<x_M$ such that $\varphi(y)=\varphi(x_M)>\varphi(x_m)$, and~$x_m$ and $x_M$ are respectively a point of local minimum and of local maximum for $\varphi$. Integrating the equation in~\eqref{TF} against $\varphi'$ over $[y,x_M]$ leads to the same contradiction as~before.
\end{proof}

The next lemma shows that Hypothesis~\ref{hyp} implies the invasion property.

\begin{lemma}\label{lemhyp3}
Hypothesis~$\ref{hyp}$ implies~\eqref{hyp:theta}-\eqref{intf} $($and then the invasion property$)$.
\end{lemma}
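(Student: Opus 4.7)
The plan is to prove the two conditions \eqref{hyp:theta} and \eqref{intf} separately; once both hold, the invasion property follows at once from Proposition~\ref{pro:hyp1}.

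For \eqref{intf}, I would multiply the profile ODE $\varphi''+c_0\varphi'+f(\varphi)=0$ by $\varphi'$ and integrate from $-\infty$ to an arbitrary $z_0\in\R$. Using $\varphi(-\infty)=1$ and $\varphi'(\pm\infty)=0$ (standard elliptic estimates, as in the proof of Lemma~\ref{lem:phi'<0}) together with the change of variables $s=\varphi(\zeta)$ in the reaction term, this yields the energy identity
\[
\int_{\varphi(z_0)}^{1} f(s)\,ds \;=\; \frac{(\varphi'(z_0))^2}{2} + c_0\int_{-\infty}^{z_0}(\varphi'(\zeta))^2\,d\zeta.
\]
The right-hand side is strictly positive since $c_0>0$ and $\varphi'<0$ throughout $\R$ by Lemma~\ref{lem:phi'<0}. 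Because $\varphi$ is a strictly decreasing bijection of $\R$ onto $(0,1)$, this proves $\int_t^1 f>0$ for every $t\in(0,1)$; letting $z_0\to+\infty$ extends the inequality to $t=0$ and also yields the bonus formula $\int_0^1 f = c_0\int_{\R}(\varphi')^2$.

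For \eqref{hyp:theta}, I would argue by contradiction, assuming the existence of a sequence $s_n\to 1^-$ with $f(s_n)\le 0$. Writing \eqref{TF} as the planar system $\varphi'=\psi,\ \psi'=-c_0\psi-f(\varphi)$, the linearisation at the rest point $(1,0)$ has characteristic polynomial $\lambda^2+c_0\lambda+f'(1)=0$. If $f'(1)>0$, both roots have negative real part, making $(1,0)$ asymptotically stable in forward time and hence repelling in backward time; this contradicts $(\varphi(z),\varphi'(z))\to(1,0)$ as $z\to-\infty$, so $f'(1)\le 0$. Passing to the limit in $f(s_n)/(s_n-1)\ge 0$ gives $f'(1)\ge 0$, forcing $f'(1)=0$. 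To exploit this I would reformulate via the profile-velocity variable $p(s):=-\varphi'(\varphi^{-1}(s))$, which is positive on $(0,1)$ with $p(1^-)=0$ and satisfies the first-order ODE
\[
p(s)p'(s)=c_0\,p(s)-f(s),\qquad s\in(0,1).
\]
The key observation is that if one can establish $p'\le 0$ on a left neighbourhood $(1-\delta,1)$ of $1$, then $f(s)=c_0 p(s)-p(s)p'(s)\ge c_0 p(s)>0$ there, contradicting $f(s_n)\le 0$ and proving \eqref{hyp:theta} with $\theta=1-\delta$.

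The main obstacle is therefore establishing this (weak) monotonicity of $p$ near $1$ in the degenerate case $f'(1)=0$. My approach is a one-dimensional centre-manifold reduction at $(1,0)$: the forward stable direction is $(1,-c_0)$, so any nonzero component along it would blow up exponentially in backward time and prevent the backward convergence of the trajectory. Consequently, the front's trajectory must lie, for sufficiently negative $z$, on the centre manifold $\{\psi=h(\phi)\}$ (tangent to the $\phi$-axis, with $h(0)=h'(0)=0$), and the invariance identity $h(\phi)(h'(\phi)+c_0)=-f(1+\phi)$, together with $h<0$ on the trajectory (since $\psi=\varphi'<0$) and the continuity of $h'$ at $0$ (whence $h'(\phi)+c_0\to c_0>0$ as $\phi\to 0^-$), forces the left-hand side to be strictly negative for $\phi<0$ close to $0$, giving $f(1+\phi)>0$ there, which is \eqref{hyp:theta}. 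The delicate point is to justify the centre-manifold step under the mere $C^1$ regularity of $f$; if a direct appeal to a $C^1$ centre-manifold theorem seems too heavy, a more self-contained alternative is to work with the integrated identity $e^{c_0z}(-\varphi'(z))=\int_{-\infty}^{z}e^{c_0\zeta}f(\varphi(\zeta))\,d\zeta$ (obtained from $g:=-\varphi'$ solving $g'+c_0g=f(\varphi)$) to extract the sharp asymptotic behaviour of $\varphi'$ as $z\to-\infty$ and conclude the sign of $f(\varphi)$ there by hand.
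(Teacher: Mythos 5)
Your treatment of \eqref{intf} is exactly the paper's argument (energy identity from multiplying the profile ODE by $\vp'$ and integrating), and your derivation of $f'(1)=0$ under the contradiction hypothesis is a valid variant of what the paper does (the paper instead first extracts a sequence $\sigma_n\to1^-$ with $f(\sigma_n)=0$, from which $f'(1)=0$ is immediate). The divergence is in how you then contradict the failure of \eqref{hyp:theta}. Your route is phase-plane/dynamical: you invoke a $C^1$ centre-manifold theorem at the degenerate rest point $(1,0)$, argue from the backward-in-time reduction principle that the heteroclinic orbit must lie on the centre manifold $\psi=h(\phi)$, and read off $f>0$ near $1$ from the invariance identity $h(\phi)(h'(\phi)+c_0)=-f(1+\phi)$ together with $h<0$, $h'(0^-)+c_0>0$. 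This is correct in outline, but it is heavy: it needs the existence of a $C^1$ centre manifold for a merely $C^1$ field, and it needs the nontrivial fact that a solution bounded for all backward time lies on the centre-unstable manifold; both are standard but are real theorems that would need to be cited or reproved. The paper's proof is considerably more elementary and entirely self-contained: once $f'(1)=0$ is known, it takes the explicit function $\psi(z)=\sigma_n+e^{-c_0 z/2}$, checks via the bound $f(\psi)\le(\max_{[\sigma_n,1]}f')e^{-c_0z/2}$ that $\psi$ is a \emph{strict} supersolution of $\vp''+c_0\vp'+f(\vp)=0$ once $n$ is large (precisely because $\max_{[\sigma_n,1]}f'\to f'(1)=0$), and then slides $\psi$ until it touches $\vp_0$ from above to contradict the strong maximum principle — no phase-plane machinery at all. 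Finally, your proposed ``more self-contained alternative'' via the integral identity $e^{c_0z}(-\vp'(z))=\int_{-\infty}^z e^{c_0\zeta}f(\vp(\zeta))\,d\zeta$ is not carried far enough to be checkable: that identity only asserts that a weighted primitive of $f\circ\vp$ is positive, which by itself does not force $f(\vp(z))>0$ pointwise near $z=-\infty$, so as written it does not replace the centre-manifold step.
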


\begin{proof}
By Hypothesis~\ref{hyp}, problem~\eqref{TF} admits a solution $\vp=\vp_0$ for $c=c_0>0$. We know from Lemma~\ref{lem:phi'<0} that property~\eqref{intf} holds for $t=0$. Next, since $\varphi_0'(-\infty)=\varphi_0''(-\infty)=0$ owing to elliptic estimates, integrating the first equation in~\eqref{TF} against $\varphi_0'$ over the interval $(-\infty,x)$, for any $x\in\R$, yields
$$\frac{(\varphi_0'(x))^2}{2}+\int_{-\infty}^xc_0\,(\varphi_0'(z))^2\,dz=\int_{\vp_0(x)}^{1}f(s)\,ds.$$
By the arbitrariness of $x$ and the positivity of $c_0$, we deduce that $\int_t^1f(s)\,ds>0$ for all~$t\in(0,1)$. Property~\eqref{intf} is thereby shown.

Let us turn to~\eqref{hyp:theta}. Assume by contradiction that this pro\-perty does not hold, that is, that there exists a sequence $\seq{t}$ in $(0,1)$ converging to $1$ such that $f(t_n)\leq0$ for all $n\in\N$. Together with~\eqref{intf}, it follows that there exists another sequence $\seq{\sigma}$ in $(0,1)$ converging to~$1$ such that $f(\sigma_n)=0$ for all $n\in\N$. We deduce in particular that $f'(1)=0$. For $n\in\N$, consider the function
$$\psi:z\mapsto\psi(z):=\sigma_n+e^{-c_0z/2},$$
where $c_0>0$ is, as in the previous paragraph, given by Hypothesis~\ref{hyp}. With $f$ being extended for convenience by $0$ in $(1,+\infty)$, we have that
\begin{align*}
\psi''(z)+c_0\psi'(z)+f(\psi(z)) &=-\frac{c_0^2}4 \,e^{-c_0z/2}+f(\sigma_n)+\int_{\sigma_n}^{\psi(z)}f'(s)\,ds\vspace{3pt}\\
&\leq-\frac{c_0^2}4 \,e^{-c_0z/2}+\bigg(\max_{s\in[\sigma_n,1]}f'(s)\bigg)e^{-c_0z/2}
\end{align*}
for all $z\in\R$. Taking $n$ large enough, we find that the right-hand side is negative, that is, $\psi$ is a strict supersolution of the equation satisfied by the front profile $\vp_0$. By suitable translation in $z$, one can reduce to the case where $\psi(z)$ ``touches from above'' $\vp_0(z)$, i.e., $\min_\R(\psi-\vp_0)=0$. This contradicts the elliptic strong maximum principle. Therefore,~\eqref{hyp:theta} is satisfied too.

As a conclusion, one has shown that Hypothesis~\ref{hyp} implies both~\eqref{intf} and~\eqref{hyp:theta}, hence it implies the invasion property from Proposition~\ref{pro:hyp1}.
\end{proof}

The last of the three lemmas gives the equivalence between the existence of a traveling front connecting $1$ to $0$ with a positive speed and the existence of a positive minimal speed for such fronts.

\begin{lemma}\label{lemminimal}
If problem~\eqref{TF} admits a solution for $c=c_0>0$ then there exists $c^*>0$ such that~\eqref{TF} admits a solution for $c=c^*$ and none for $c<c^*$.
\end{lemma}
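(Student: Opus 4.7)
I set $S := \{c \in \R : \text{(TF) is solvable for speed } c\}$. By Hypothesis~\ref{hyp} we have $c_0 \in S$, and Lemma~\ref{lem:phi'<0} applied to $(c_0,\vp_0)$ gives $\int_0^1 f(s)\,ds > 0$, which via Lemma~\ref{lem:phi'<0} again forces every $c \in S$ to be positive. Define $c^* := \inf S \in [0, c_0]$; the strategy is to show that $c^*$ is attained, because the positivity $c^* > 0$ will then follow by applying Lemma~\ref{lem:phi'<0} to the corresponding profile (the sign of the speed and of $\int_0^1 f$ agree), and the fact that no $c < c^*$ belongs to $S$ is immediate from the definition.

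\textbf{Compactness.} Take $c_n \downarrow c^*$ in $S$ with profiles $\vp_n$. By Lemma~\ref{lem:phi'<0} each $\vp_n$ is strictly decreasing from $1$ to $0$, so after translation I may normalize $\vp_n(0) = \theta$, where $\theta \in (0,1)$ is the invasion-property threshold furnished by Lemma~\ref{lemhyp3} and Proposition~\ref{pro:hyp1}; in particular $f > 0$ on $[\theta, 1)$. Since $0 < \vp_n < 1$, $f$ is Lipschitz on $[0,1]$ and $(c_n)$ is bounded, the ODE $\vp_n'' = -c_n\vp_n' - f(\vp_n)$ yields uniform $C^{2,\alpha}(\R)$ bounds on $(\vp_n)$ by standard interior elliptic estimates. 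Arzelà–Ascoli provides a subsequence converging in $C^2_{\loc}(\R)$ to some $\vp^* \in C^2(\R)$, non-increasing, $0 \leq \vp^* \leq 1$, with $\vp^*(0)=\theta$, solving $(\vp^*)'' + c^*(\vp^*)' + f(\vp^*) = 0$.

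\textbf{Identifying the limits and concluding.} Monotonicity gives $a := \vp^*(-\infty) \in [\theta,1]$ and $b := \vp^*(+\infty) \in [0,\theta]$; standard arguments ($\vp^{*\prime}(\pm\infty) = 0$) force $f(a) = f(b) = 0$. Since $f > 0$ on $[\theta,1)$, the only zero of $f$ in $[\theta,1]$ is $1$, hence $a = 1$. For $b=0$ I argue by contradiction: if $b > 0$, I apply the same compactness procedure to the translated sequence $\vp_n(\cdot + \zeta_n)$, where $\zeta_n$ is the (finite) point at which $\vp_n$ crosses some level in $(0,b)$, obtaining a second $C^2_{\loc}$-limit $\widetilde\vp$ solving the ODE at speed $c^*$, strictly decreasing, with upper limit $\geq b$ and lower limit $< b$. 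Concatenating $\vp^*$ (connecting $1$ to $b$) with $\widetilde\vp$ (connecting $b$ to some value $<b$), the structure condition~\eqref{intf} from Proposition~\ref{pro:hyp1}, together with a parabolic comparison argument based on the invasion property and on placing suitably shifted copies of the two profiles above/below a front of slightly smaller speed, contradicts the minimality of $c^*$ — a rigidity akin to the propagating-terrace arguments of~\cite{FM,DGM}. Hence $b = 0$, so $\vp^*$ is a genuine solution of~\eqref{TF} at speed $c^*$, and Lemma~\ref{lem:phi'<0} applied to $(c^*,\vp^*)$ finally gives $c^* > 0$.

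The main obstacle is precisely the identification $b = 0$: without it, the compactness limit could in principle degenerate into a partial front between $1$ and some intermediate zero of $f$, and only a careful combination of the invasion property with a comparison/terrace argument rules this out.
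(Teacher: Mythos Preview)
Your setup, the compactness step, and the identification $\vp^*(-\infty)=1$ are essentially the paper's argument. The gap is the step $b=\vp^*(+\infty)=0$, which you yourself flag as ``the main obstacle'' but do not actually resolve. ``Concatenating'' $\vp^*$ with a second limit $\widetilde\vp$ does not produce a solution of the ODE, and the ``parabolic comparison argument based on the invasion property and on placing suitably shifted copies of the two profiles above/below a front of slightly smaller speed'' is never carried out; as written it is a gesture toward terrace-type rigidity, not a proof. In particular it is unclear what object at speed strictly below $c^*$ you would compare with, or how the invasion property enters to force a contradiction with minimality.

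The paper bypasses this entirely with a short phase-plane comparison against the \emph{given} front $\vp_0$ at speed $c_0$. Write $p(q)=\vp'(\vp^{-1}(q))$, so that $p_0$ and $p^*$ solve $p'p=-cp-f(q)$ with $c=c_0$ and $c=c^*$ respectively. If $\underline q:=\vp^*(+\infty)>0$ then $p^*(\underline q^+)=p^*(1^-)=0$ while $p_0(\underline q)<0$, so on a maximal interval $(\underline q,\overline q)$ one has $p_0<p^*<0$ with $p_0(\overline q^-)=p^*(\overline q^-)$. Integrating both equations over $(\underline q,\overline q)$ and subtracting yields
\[
\tfrac12\,p_0^2(\underline q)=\int_{\underline q}^{\overline q}\big(c_0 p_0(s)-c^*p^*(s)\big)\,ds,
\]
and the right-hand side is nonpositive because $c_0\ge c^*\ge0$ and $p_0<p^*<0$ there, contradicting $p_0(\underline q)<0$. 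This three-line computation, using only the already-available front $\vp_0$, replaces your unspecified comparison/terrace argument.
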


\begin{proof}
Let us assume that~\eqref{TF} admits a solution $\vp=\vp_0$ for $c=c_0>0$, that is, that Hypothesis~\ref{hyp} holds. The first statement of Lemma~\ref{lem:phi'<0} implies that~$\int_0^1f(s)\,ds>0$, and thus that if~\eqref{TF} admits a solution for some $c\in\R$ then necessarily $c>0$. Let $c^*\geq0$ be the infimum of $c$ for which~\eqref{TF} admits a solution, and let $\seq{c}$ in $[c^*,+\infty)$ be such that~\eqref{TF} admits a solution $\vp=\vp_n$ for $c=c_n$, for any $n\in\N$, with $c_n\to c^*$ as $n\to+\infty$. The functions $\seq{\vp}$ are decreasing thanks to Lemma~\ref{lem:phi'<0}. We normalize them through horizontal translations by the condition $\vp_n(0)=\theta$, where $\theta\in(0,1)$ is the value provided by property~\eqref{hyp:theta}, which holds thanks to Lemma~\ref{lemhyp3}. By elliptic estimates, the sequence $\seq{\vp}$ converges (up to subsequence) locally uniformly towards a solution~$\vp^*$ of the first equation in~\eqref{TF} with $c=c^*$. The function $\vp^*$ is nonincreasing and satisfies $0\leq\vp^*\leq1$, $\vp^*(0)=\theta$ and moreover, by elliptic estimates, $(\vp^*)'(\pm\infty)=(\vp^*)''(\pm\infty)=0$. It follows that $f(\vp^*(\pm\infty))=0$, hence in particular $\vp^*(-\infty)=1$ due to~\eqref{hyp:theta}. Applying the strong maximum principle to the function $(\vp^*)'$, which solves a linear elliptic equation, we infer that $(\vp^*)'<0$ in $\R$.

We now show that $\vp^*(+\infty)=0$. To do this, we observe that, if $\vp'<0$ in~$\R$, then the equation in~\eqref{TF} can be reformulated in terms of the negative function $p(s):=\varphi'(\varphi^{-1}(s))$~as
$$p'(s)p(s)=-cp(s)-f(s)\for s\in(\vp(+\infty),\vp(-\infty)).$$
Moreover, by elliptic estimates, the function $p$ satisfies $p(\vp(+\infty)^+)=p(\vp(-\infty)^-)=0$. We consider such reformulation for the functions $p_0$ and $p^*$ associated with $\vp_0$ and $\varphi^*$ respectively, recalling that $(\vp^*)'<0$ in $\R$, and also $\vp_0'<0$ in $\R$ thanks to Lemma~\ref{lem:phi'<0}. We have that $\vp_0(-\infty)=1$ and $\vp_0(+\infty)=0$. Assume by way of contradiction that~$\ul q:=\vp^*(+\infty)>0$. Hence $p^*(\ul q^+)=p^*(1^-)=0$. We set
$$\ol q:=\sup\big\{q\in (\ul q,1)\ :\ p_0(s)<p^*(s)\text{ for all }s\in(\ul q,q)\big\}\in(\ul q,1].$$
It may happen that $\ol q=1$, but in any case $p_0(\ol q^-)=p^*(\ol q^-)$, and $p_0<p^*$ in $(\ul q,\ol q)$. We then integrate the equations satisfied by $p^*$ and $p_0$ over $(\ul q,\ol q)$, take their difference, and obtain
$$\frac12\,p_0^2(\ul q)=\int_{\ul q}^{\ol q}\big(c_0p_0(s)-c^*p^*(s)\big)\,ds.$$
Since $c_0>0$ and $c_0\geq c^*\ge0$ by the definition of $c^*$, while $p_0<p^*<0$ in $(\ul q,\ol q)$, we have reached a contradiction.

We have shown that $\vp^*(+\infty)=0$. Therefore, $\vp^*$ solves~\eqref{TF} with $c=c^*$ and thus $c^*>0$ by Lemma~\ref{lem:phi'<0}. By the definition of~$c^*$ there cannot be any solution to~\eqref{TF} with $c<c^*$. This concludes the proof.
\end{proof}

The last step before the proof of Proposition~\ref{pro1} is a key-result on the existence of front profiles in finite or semi-infinite intervals, for some speeds $c$ smaller or larger than the minimal speed $c^*$, under Hypothesis~\ref{hyp}. Before stating the result (which is also used in the proof of Theorems~\ref{th1}-\ref{th4}), we first recall that $f$ is extended by~$0$ outside~$[0,1]$, and that a planar front connecting $1$ to $0$ with speed $c\in\R$ is a solution $\varphi(x-ct)$ of~\eqref{homo} in~$\R$. The profile $\vp$ satisfies the equation 
\Fi{tw}
\varphi''+c\varphi'+ f(\varphi)=0,
\Ff
which is equivalent to the system of ODEs
\Fi{ODEs}
\begin{cases}
q'=p\\
p'=-cp-f(q).
\end{cases}
\Ff
In the phase plane, this system generates orbits $(q(x),p(x))$ which, as long as $p\neq0$, can be parameterized as the graph of a function $p=p(q)$ which solves
\Fi{phase}
\frac{dp}{dq}=-c-\frac{f(q)}p.
\Ff
Hence by Lemma~\ref{lem:phi'<0}, a solution to~\eqref{TF} corresponds to a heteroclinic connection between the stationary points $(0,0)$ and $(1,0)$ for~\eqref{ODEs}, along which~$p<0$. We always restrict to orbits of~\eqref{ODEs} contained in $[0,1]\!\times\!(-\infty,0]$. Hypothesis~\ref{hyp} and Lemma~\ref{lemminimal} translate to the existence of a heteroclinic connection between $(0,0)$ and $(1,0)$ for~\eqref{ODEs} when $c=c^*>0$ and the nonexistence of such connection when $c<c^*$. We also remember that, by Lemma~\ref{lem:phi'<0}, Hypothesis~\ref{hyp} implies that $\int_0^1f(s)\,ds>0$.

The following result asserts that, when~$c$ is slightly below the threshold $c^*$ for the existence of a heteroclinic connection for~\eqref{ODEs}, one can find a trajectory joining $(0,1)\times\{0\}$ to $\{0\}\times(-\infty,0)$, whereas, for $c$ above that threshold, there exists a trajectory joining $\{1\}\times(-\infty,0)$ to $\{(0,0)\}$.

\begin{proposition}\label{pro:fronts}
Assume that Hypothesis~$\ref{hyp}$ holds, and let $c^*>0$ be given by Lemma~$\ref{lemminimal}$. Then the following properties hold:
\begin{itemize}
\item[{\rm{(i)}}] there is $\eta\in(0,c^*)$ such that, for any $c\in[c^*-\eta,c^*)$, there exists a $C^2$ decreasing function $\ul\varphi$ defined in some interval $[0,a]$, with $a\!>\!0$, satisfying~\eqref{tw} in~$[0,a]$ together~with 
$$\theta<\ul\varphi(0)<1, \qquad \ul\varphi'(0)=0,\qquad \ul\varphi(a)=0,$$
where $\theta\in(0,1)$ is the value in condition~\eqref{hyp:theta}, which holds thanks to Lemma~$\ref{lemhyp3}$;
\item[{\rm{(ii)}}] for any $c>c^*$, there exists a decreasing function $\ol\varphi\in C^2(\R)$ satisfying~\eqref{tw} in~$\R$ together with 
\Fi{olphi}
\ol\varphi(0)=1,\ \ \ol\varphi(+\infty)=0,\ \ 0<m^{-1}\ol\varphi\leq-\ol\varphi'\leq m\,\ol\varphi\hbox{ in }\R,\ \ \ol\varphi''\geq0\hbox{ in }[b,+\infty),
\Ff 
for some $m>1$ and $b>0$.	
\end{itemize}
\end{proposition}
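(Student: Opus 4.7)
The plan is a phase-plane analysis of the system~\eqref{ODEs} with parameter $c$, systematically comparing orbits to the heteroclinic $\gamma^*$ at $c^*$ provided by Hypothesis~\ref{hyp} and Lemma~\ref{lemminimal}. The key tool is the strict monotonicity in $c$ of the slope $dp/dq = -c - f(q)/p$ at a common point: two orbits at $c_1 < c_2$ meeting at $(q_0, p_0)$ with $p_0 \neq 0$ satisfy $p(q; c_2) < p(q; c_1)$ for $q > q_0$. Together with the non-crossing of orbits at fixed $c$, this affords the necessary comparison.

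For part (ii), for each $c > c^*$ a shooting argument in the starting slope $p_*$ yields $p_*(c) < 0$ such that the orbit starting at $(1, p_*(c))$ converges to $(0,0)$ as $x \to +\infty$ while staying in $\{p < 0,\ q > 0\}$. Indeed, for $|p_*|$ small the orbit trails the canonical trajectory approaching $(0,0)$ from $(1,0)$ (the stable manifold in the saddle case $f'(0) < 0$, or the standard traveling front in the monostable case $f'(0) > 0$), while for $|p_*|$ large the orbit crosses $\{q=0\}$ transversally; continuity in $p_*$, combined with monotonicity in $c$ forcing the desired regime at some $p_*(c) < 0$, supplies the parameter. Parametrizing this orbit by $x$ with the starting point at $x = 0$ produces $\ol\varphi$ on $[0,+\infty)$ solving~\eqref{tw}, strictly decreasing, with $\ol\varphi(0) = 1$, $\ol\varphi'(0) = p_*(c)$, $\ol\varphi(+\infty) = 0$. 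I extend to $x \leq 0$ by solving $\ol\varphi'' + c\ol\varphi' = 0$, valid since $f \equiv 0$ outside $[0,1]$ and $\ol\varphi > 1$ there, with $C^2$-matching at $x = 0$ automatic since $f(1) = 0$; the explicit $\ol\varphi(x) = 1 - (p_*(c)/c)(1 - e^{-cx})$ on $x \leq 0$ is strictly decreasing with $\ol\varphi(-\infty) = +\infty$. The bounds and convexity in~\eqref{olphi} follow from asymptotic analysis: at $+\infty$ linearization of~\eqref{tw} at $\varphi = 0$ gives $\ol\varphi \sim C e^{-\lambda x}$ with $\lambda > 0$ solving $\lambda^2 - c\lambda + f'(0) = 0$, whence $-\ol\varphi'/\ol\varphi \to \lambda$ and $\ol\varphi'' = -c\ol\varphi' - f(\ol\varphi) \sim \lambda^2 \ol\varphi > 0$; at $-\infty$ one gets $-\ol\varphi'/\ol\varphi \to c$; continuity and positivity on $\R$ then yield a uniform $m > 1$.

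For part (i), for $c$ slightly less than $c^*$, the analogous trajectory approaching $(0,0)$ in $\{p < 0,\ q > 0\}$ now, by monotonicity in $c$, hits the axis $\{p = 0\}$ transversally at some $(q_0(c), 0)$ with $q_0(c) \to 1$ as $c \to c^{*-}$ (transversality using $f > 0$ near $1$ via Lemma~\ref{lemhyp3}). I choose $q_0 \in (q_0(c), 1) \subset (\theta, 1)$, which is non-empty for $c$ close enough to $c^*$, and consider the orbit starting at $(q_0, 0)$: it enters $\{p < 0\}$ since $p'(0) = -f(q_0) < 0$, lies strictly below the aforementioned trajectory for $q \in (0, q_0)$ by non-crossing, and hence cannot approach $(0,0)$ asymptotically, instead crossing $\{q=0\}$ transversally at some $p_* < 0$ in finite $x$-time $a$. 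The resulting $\ul\varphi$ on $[0, a]$ is $C^2$ and strictly decreasing on $(0, a]$, with $\ul\varphi(0) = q_0 \in (\theta, 1)$, $\ul\varphi'(0) = 0$, and $\ul\varphi(a) = 0$ as required. I expect the main obstacle to be the unified treatment of the monotonicity and shooting arguments across the different local structures of the equilibrium $(0,0)$ — saddle for bistable, stable node or focus for monostable, and non-hyperbolic for ignition or tristable cases — ensuring in each case that the relevant trajectory makes the required crossing on the correct side.
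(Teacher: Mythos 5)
Your overall strategy — phase-plane analysis in the $(q,p)$-plane of~\eqref{ODEs}, comparison of orbits via the strict monotonicity in $c$ of $dp/dq$, and shooting — is the same as the paper's, and the shape of the constructions (in particular the affine extension of $\ol\varphi$ to $x<0$ where $\ol\varphi>1$ and $f\equiv 0$) matches what the paper does. However, two concrete steps are missing.

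In part (ii), you derive $0<m^{-1}\ol\varphi\le-\ol\varphi'$ and $\ol\varphi''\ge0$ near $+\infty$ from ``linearization of~\eqref{tw} at $\varphi=0$'' with a rate $\lambda>0$ solving $\lambda^2-c\lambda+f'(0)=0$. This does not identify the decay rate of the trajectory you produced. When $f'(0)>0$ and $c>c^*\ge 2\sqrt{f'(0)}$, the equilibrium $(0,0)$ is a node with two distinct positive rates $\lambda_\pm$, and a trajectory converging to $(0,0)$ may decay at either rate; when $f'(0)=0$ (ignition, or degenerate monostable), the roots are $0$ and $c$ and the linearization is inconclusive — a generic trajectory reaching $(0,0)$ has algebraic decay, so $-\ol\varphi'/\ol\varphi\to0$ and the lower bound in~\eqref{olphi} fails. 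The paper sidesteps this by working with the specific trajectory $p_c=\lim_{\e\searrow0}p_{c,\e}$, where $p_{c,\e}$ emanates from the regular point $(0,-\e)$; this $p_c$ satisfies $p_c(q)/q\le -c/2$ near $0$ by the nonlinear comparison argument of~\cite[Proposition~4.1]{AW}, and that inequality is precisely what drives both the lower bound on $-\ol\varphi'/\ol\varphi$ and the convexity near $+\infty$. Your shooting from $(1,p_*)$, if pushed to the extremal $|p_*|$ for which convergence to $(0,0)$ still holds, would in fact land on $p_c$ — but you never establish that the boundary orbit converges to $(0,0)$, nor that it decays at the fast rate, and these are exactly the delicate points, especially in the non-hyperbolic case $f'(0)=0$.

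In part (i), you shoot from $(q_0,0)$ with $q_0$ just below $1$ and assert that the resulting orbit, ``lying strictly below the aforementioned trajectory,'' cannot approach $(0,0)$ and must cross $\{q=0\}$ at a negative $p$ in finite time. This does not follow: as $q$ decreases, $f(q)$ may change sign (bistable, multistable), so the orbit could re-enter $\{p=0\}$ at some $q\in(0,q_0)$ where $f(q)\le0$, or stall. The paper proves the crossing by integrating the difference of the equations for $(p_{c,\e_c})^2$ and $(p^*)^2$ (the same device already used in Lemma~\ref{lemminimal}), and it shoots instead from $(0,-\e_c)$ with $\e_c$ tuned so that the orbit touches the heteroclinic $p^*$ exactly at $q'_c=\max(q_c,\theta)$. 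That construction also removes your unproved assumption that $q_0(c)>\theta$, which you support only by the heuristic ``$q_0(c)\to1$ as $c\to(c^*)^-$.'' In short, the obstacle you flag at the end — the unified treatment across the possible local structures of $(0,0)$ — is exactly where your argument is incomplete; the paper resolves it by replacing linearization at $(0,0)$ and shooting from $\{p=0\}$ with the limit-trajectory $p_c$, the identity $\hat c=c^*$, and integration identities for $p^2$.
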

	
\begin{proof}
To start with, one observes that $c^*\geq 2\sqrt{f'(0)}$ if $f'(0)>0$. Indeed, otherwise $(0,0)$ is a focus for~\eqref{ODEs} when $c=c^*$ and then no trajectory can converge toward it, whereas one such trajectory is provided by the solution to~\eqref{TF} for $c=c^*$, which exists by Lemma~\ref{lemminimal}. In order to prove the statements~(i) and~(ii), we first show that $c^*$ coincides with the critical speed $\hat c$ constructed in~\cite{AW}. 
		
Let us recall the construction of $\hat c$ in~\cite{AW}. First, for any $c\in\R$ and any $\e>0$, let $p=p_{c,\e}(q)$ be the trajectory of~\eqref{ODEs} emerging from the regular point $(0,-\e)$. Namely, $p_{c,\e}(q)$ is a negative solution to~\eqref{phase} for non\-negative~$q$ in a maximal interval $[0,q_{c,\e})$, with $q_{c,\e}\in(0,+\infty]$, together with $p_{c,\e}(0)=-\e$. Hence~$p_{c,\e}^2$ is a positive solution to
\Fi{pc2}
(p_{c,\e}^2)'(q)=-2cp_{c,\e}(q)-2f(q)
\Ff
in $[0,q_{c,\e})$. If $q_{c,\e}>1$ then  $p_{c,\e}<0$ in $[0,1]$. In the case $q_{c,\e}\leq 1$, one has that $p_{c,\e}$ is bounded in $[0,q_{c,\e})$ (because $(p_{c,\e}^2)'\le|c|p_{c,\e}^2+|c|+2\max_{[0,1]}|f|$ in $[0,q_{c,\e})$) and then, by~\eqref{pc2}, $p_{c,\e}^2$ is Lipschitz-continuous in $[0,q_{c,\e})$ and satisfies $\lim_{q\nearrow q_{c,\e}}p_{c,\e}^2(q)=0$; in such a case we then set $p_{c,\e}=0$ in $[q_{c,\e},1]$. The functions $p_{c,\e}$ are nonincreasing with respect to $\e$ in $[0,1]$. Moreover the family $(p_{c,\e}^2)_{\e\in(0,1]}$ is bounded in $C^{0,1}([0,1])$. One then lets $p_c$ denote the limit in $[0,1]$ as $\e\searrow0$ of $p_{c,\e}$. It follows that $p_c(0)=0$, that $p_c\leq 0$ in $[0,1]$ and that~$p_c^2$ is Lipschitz-continuous in $[0,1]$. Moreover, as long as $p_c$ is negative, it parameterizes a trajectory of~\eqref{ODEs}, that is, it solves~\eqref{phase} hence~$p_c^2$ solves~\eqref{pc2}. For each $\e>0$ and $c<c'$, equation~\eqref{phase} shows that $p_{c,\e}>p_{c',\e}$ in a right neighborhood of $0$ and also that there is no $q\in(0,1]$ such that $p_{c,\e}>p_{c',\e}$ in $(0,q)$ and  $p_{c,\e}(q)=p_{c',\e}(q)<0$. It follows that $p_{c,\e}\geq p_{c',\e}$ in $[0,1]$ and therefore 
\Fi{pc>pc'}
p_{c}\geq p_{c'} \ \ \text{in $[0,1],\ $ for any $c<c'$}.
\Ff 
On the other hand, if $c>1+\max_{[0,1]}|f'|$ one has, for each $\e>0$, $p_{c,\e}(q)\leq -q$ for all $q\in[0,1]$, because otherwise there would be $q_0\in(0,1]$ such that $p_{c,\e}(q)<-q$ in $[0,q_0)$ and $p_{c,\e}(q_0)= -q_0$. This would contradict~\eqref{phase}. Therefore $p_{c}(q)\leq -q$ for all $q\in[0,1]$. Then the critical speed~$\hat c$ is defined as the infimum of $c$ such that~$p_c(1)<0$.
		
Let us show that $\hat c\leq c^*$. Let $p^*(q)$ be the parameterization of a front with (minimal) speed $c^*$ given by Lemma~\ref{lemminimal}. Then $p^*(q)$ is a negative solution of~\eqref{phase} in $(0,1)$ and (its continuous extension to $[0,1]$) satisfies $p^*(0)=p^*(1)=0$. By construction, $p_{c^*}=\lim_{\e\searrow0}p_{c,\e}\leq p^*$ in $[0,1]$ (trajectories with the same $c$ cannot cross each other), hence $p_{c^*}<0$ in~$(0,1)$. We shall now prove that
\be\label{pc<0}
\forall\,c>c^*,\quad p_c<0\ \hbox{ in }(0,1].
\ee
This would imply in particular that $p_c(1)<0$ for all $c>c^*$, hence, by definition, $\hat c\le c^*$. To show~\eqref{pc<0}, we first infer from~\eqref{pc>pc'} that, for $c>c^*$, one has $p_{c}\leq p_{c^*}\le p^*<0$ in $(0,1)$ and thus both $p_c$ satisfies~\eqref{TF} in $(0,1)$. Next, if by contradiction $p_c(1)=0$, then integrating the equations satisfied by $p_c^2$ and $(p^*)^2$ over $(0,1)$ and taking the difference~we~get
$$\int_{0}^{1}\big(cp_c(s)-c^*p^*(s)\big)\,ds=0.$$ 
This is impossible because $c>c^*>0$ and $p_{c}\leq p^*<0$ in $(0,1)$.

Let us now show that $\hat{c}\ge c^*$. Assume by contradiction that $\hat{c}<c^*$. Then there exists $c\in(\hat c,c^*)$ for which $p_c(1)<0=p^*(1)$. If by contradiction $p_c(q_1)\geq p^*(q_1)$ for some $q_1\in(0,1)$ then there would exist $q_2\in[q_1,1)$ such that 
$$p_c(q_2)=p^*(q_2),\qquad\frac{dp_c}{dq}(q_2)\leq \frac{dp^*}{dq}(q_2),$$
which is impossible due to \eqref{phase}. This means that $p_c<p^*<0$ in~$(0,1)$. Now, for $\e>0$, let $\t p_{c,\e}(q)$ be the parametrization of the trajectory emerging from the regular point $(1,-\e)$. Since $\t p_{c,\e}(1)<0=p^*(1)$, the same argument as before shows that $\t p_{c,\e}(q)<p^*(q)$ for all $q\in(0,1]$. In particular, $\t p_{c,\e}$ satisfies the equation~\eqref{phase} in $(0,1]$. On the other hand, for $\e<-p_c(1)$ we have that $\t p_{c,\e}(1)=-\e>p_c(1)$ and thus by uniqueness for~\eqref{phase} one gets $\t p_{c,\e}>p_c$ in the whole $(0,1]$, and for the same reason the functions $\t p_{c,\e}$ are decreasing with respect to $\e$ at any point in $(0,1]$. Furthermore, the arguments used for the functions $p_{c,\e}$ imply that the family $(\t p_{c,\e}^2)_{\e\in(0,1]}$ is bounded in $C^{0,1}([0,1])$. As a consequence, $\t p_{c,\e}$ converges  as $\e\searrow0$ to a function~$\tilde p_c$  uniformly in $[0,1]$. The function $\t p_c$ is continuous in~$[0,1]$, satisfies $p_c\leq\tilde p_c\leq p^*$ in $[0,1]$ and vanishes at $1$ (and also at $0$ since $p_c$ and $p^*$~do). Moreover it solves~\eqref{phase} in $(0,1)$. Therefore~$\t p_c$ parameterizes a trajectory of~\eqref{ODEs} connecting~$(1,0)$ to~$(0,0)$, which then corresponds to a solution of~\eqref{TF} with speed~$c<c^*$ contradicting Lemma~\ref{lemminimal}. We have thereby shown that~$\hat c=c^*$.
		
We can now prove statement (i). Let us fix $\bar\e>0$. By Cauchy-Lipschitz theorem one has $p_{c^*,\bar\e}<p^*\leq0$ in $[0,1)$. Integrating the equation satisfied by $(p^*)^2$ and $p_{c^*,\bar\e}^2$ over $(0,1)$ and taking the difference we get
$$\frac12p_{c^*,\bar\e}^2(1)=\frac{1}2\bar\e^2+c^*\int_{0}^{1}\big(p^*(s)-p_{c^*,\bar\e}(s)\big)\,ds>0,$$ 
hence $p_{c^*,\bar\e}(1)<0$. Thus it holds that $p_{c^*,\bar\e}<p^*\leq0$ in $[0,1]$. It follows that $p_{c^*,\bar\e}$ satisfies~\eqref{phase} in $[0,1]$ and thus there exists $\eta\in(0,c^*)$ such that $p_{c,\bar\e}<p^*$ in $[0,1]$ for all $c\in[c^*-\eta,c^*)$. Take one of such $c$'s. We know that $p_c(0)=0$ and also that $p_c(1)=0$ (because $c<c^*$ and the latter coincides with $\hat c$) therefore there exists $q_c\in(0,1)$ such that $p_c(q_c)=0>p^*(q_c)$ (because otherwise $p_c$ would give rise to a solution of~\eqref{TF} with speed $c<c^*$, which is impossible due to Lemma~\ref{lemminimal}). We then define
$$\e_c:=\inf\{\e\in(0,\bar\e]\ :\ p_{c,\e}<p^* \text{ in }[0,\max(q_c,\theta)]\},$$
where $\theta\in(0,1)$ is given by~\eqref{hyp:theta}, which holds thanks to Lemma~$\ref{lemhyp3}$. Since $\lim_{\e\searrow0}p_{c,\e}(q_c)=p_c(q_c)>p^*(q_c)$ one deduces that $\e_c>0$. Moreover, since $p_{c,\e}$ solves~\eqref{phase} in the region where it is negative, the continuous dependence with respect to $\e$ implies that $p_{c,\e_c}\leq p^*$ in $[0,\max(q_c,\theta)]$ with equality at some point $q_c'\in[0,\max(q_c,\theta)]$. Clearly $q'_c>0$, furthermore $q'_c\notin (0,\max(q_c,\theta))$ because otherwise we would have both $p_{c,\e_c}(q'_c)=p^*(q'_c)<0$ and $p_{c,\e_c}'(q'_c)=(p^*)'(q'_c)$, which is ruled out by the equations since~$c<c^*$. As a consequence $q'_c=\max(q_c,\theta)$. From this one gets $p_{c,\e_c}'(q'_c)>(p^*)'(q'_c)$ again by the equation. We finally claim that there exists $s_c\in(q'_c,1)$ such that $p_{c,\e_c}<0$ in $[0,s_c)$ and $p_{c,\e_c}(s_c)=0$. Otherwise there would be~$q''_c\in (q'_c,1]$ such that $p^*<p_{c,\e_c}<0$ in $(q'_c,q''_c)$ and $p_{c,\e_c}(q''_c)=p^*(q''_c)$ which, by the usual integration, leads to the contradiction
$$c\int_{q'_c}^{q''_c}p_{c,\e_c}(s)\, ds= c^*\int_{q'_c}^{q''_c}p^*(s)\, ds.$$
The parametrization $p_{c,\e_c}$ provides the desired function~$\ul\varphi$ (up to translation) with $\ul\vp(0)=s_c\in(\theta,1)$.

We finally prove (ii). Take $c>c^*$. We know that $p_c$ vanishes at $0$ and, by~\eqref{pc<0}, that $p_c$ is negative in $(0,1]$. Recalling that $f$ is extended to $0$ in $(1,+\infty)$, we extend $p_c$ in $(1,+\infty)$ by solving the equation~\eqref{phase} for $q>1$. Namely, $p_c(q)=p_c(1)-c(q-1)$ for $q>1$. We then define $\ol\varphi$ as the positive solution of~\eqref{tw} associated with this trajectory  satisfying~$\ol\varphi(0)=1$. One has that~$\ol\varphi(z)=Ae^{-cz}+1-A$ for $z<0$, for some $A>0$, and that $\ol\vp$ is defined on the whole~$\R$. The existence of $m>1$ such that~$|\ol\varphi'|\leq m\ol\varphi$ follows from elliptic estimates and Harnack's inequality. Next, by~\cite[Proposition~4.1]{AW}, $p_c(q)/q\leq-c/2$ in a right neighborhood  of~$0$, that is, $\ol\varphi'(z)\leq-(c/2)\ol\varphi(z)$ for~$z$ large enough. From this we deduce on the one hand that, up to increasing~$m$ if need be,~$\ol\varphi'\leq-m^{-1}\ol\varphi$ in $\R$. On the other hand, we infer that, for any $\nu>0$,
$$\ol\varphi''\geq \frac{c^2}2\ol\varphi-f(\ol\varphi)\geq \Big(\frac{c^2}2-f'(0)-\nu\Big)\ol\varphi,$$
for all $z$ large enough. Recalling that
$c>c^*\geq 2\sqrt{\max(f'(0),0)}$, we eventually get that $\bar\varphi''>0$ in some interval $[b,+\infty)$ with $b>0$.	
\end{proof}	

Putting together the previous results we derive Proposition~\ref{pro1}, which provides the asymptotic speed of spreading for solutions to~\eqref{homo} with compactly supported initial data.
		
\begin{proof}[Proof of Proposition~$\ref{pro1}$] 
Since we here assume Hypothesis~\ref{hyp}, Lemma~\ref{lemminimal} yields the exis\-tence of a positive minimal speed $c^*$ of traveling fronts for~\eqref{homo} connecting $1$ to $0$, and Lemma~\ref{lemhyp3} shows that~\eqref{hyp:theta}-\eqref{intf} and the invasion property are fulfilled, for some $\theta\in(0,1)$ and~$\rho>0$. Therefore, any solution $u$ emerging from an initial datum $u_0$ satisfying~\eqref{hyptheta} spreads, in the sense that~$u(t,\cdot)\to1$ as $t\to+\infty$ locally uniformly in $\R^N$, and moreover using the function $\ul\vp$ provided by Proposition~\ref{pro:fronts}~(i), exactly as in the proof of~\cite[Theorem~5.3]{AW}, one shows that such a spreading solution $u$ satisfies~\eqref{c<c*}. Assume now that the initial condition~$u_0$ of~\eqref{homo} is compactly supported. As~in the proof of~\cite[Theorem~5.1]{AW}, it follows from Proposition~\ref{pro:fronts}~(ii) that~\eqref{c>c*} holds. 
\end{proof}

\begin{remark}\label{rem:spreadingspeed}{\rm 
Under the sole invasion property, the property~\eqref{c<c*} of Proposition~\ref{pro1} is still fulfilled, for a certain positive speed $c^*$. Indeed, if $u_0$ is as in Definition~$\ref{hyp:invasion}$ and if $v$ denotes the solution to~\eqref{homo} with initial condition $v_0=\theta\,\1_{B_\rho(x_0)}$, then $v(t,\cdot)\to1$ as $t\to+\infty$ locally uniformly in $\R^N$ and there exists $T>0$ such that $1\ge u(T,\cdot+y)\ge v(T,\cdot+y)\ge v_0$ in $\R^N$ for every~$|y|\leq1$. Hence, iterating and using the comparison principle, one finds $1\ge u(kT+t,\cdot+ky)\ge v(kT+t,\cdot+ky)\ge v(t,\cdot)$ in $\R^N$ for all $k\in\N$, $t\geq0$, and $|y|\le1$. Since $v(t,\cdot)\to1$ locally uniformly as $t\to+\infty$, one readily infers that $\min_{|x|\le c t}u(t,x)\to1$ as $t\to+\infty$, for every $c\in[0,1/T)$.}
\end{remark}

	
\section{Sets of convergence towards $1$ and $0$}\label{sec4}

Loosely speaking, the quantity $w(e)$ defined by~\eqref{ass0} separates the region where $u$ converges to $1$ to the one where $u$ converges to $0$ as $t\to+\infty$. In this section, we obtain some sets of points which belong to one or the other regions. 

The set where $u\to1$ is immediately obtained from property~\eqref{c<c*} of Proposition~\ref{pro1}.

\begin{lemma}\label{lem:U+B}
Assume that Hypothesis $\ref{hyp}$ holds, let $c^*>0$ and $\rho>0$ be given by Proposition~$\ref{pro1}$, and let~$u$ be the solution of~\eqref{homo}-\eqref{defu0}, with $U\subset\R^N$ satisfying $U_\rho\neq\emptyset$. Then, it holds that
$$\forall\,c\in(0,c^*),\quad\inf_{x\in U_\rho+B_{ct}}\!u(t,x)\to1\as t\to+\infty.$$
\end{lemma}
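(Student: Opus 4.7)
The strategy is to reduce everything to a single ``reference'' invading solution via comparison and translation invariance. Fix $c\in(0,c^*)$, and let $\theta\in(0,1)$ and $\rho>0$ be the values provided by Proposition~\ref{pro1}. The key observation is that every $x_0\in U_\rho$ carries a ``copy'' of the initial datum $\theta\,\1_{B_\rho(x_0)}$ below $u_0=\1_U$: indeed, $x_0\in U_\rho$ means $B_\rho(x_0)\subset U$, so $u_0\ge\theta\,\1_{B_\rho(x_0)}$ in $\R^N$.

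Let $v$ denote the solution of~\eqref{homo} with initial condition $v_0=\theta\,\1_{B_\rho}$. Since~\eqref{homo} is translation-invariant in~$x$, the solution emanating from $\theta\,\1_{B_\rho(x_0)}$ is just $v(t,\cdot-x_0)$. The parabolic comparison principle then yields
\[
u(t,x)\;\ge\; v(t,x-x_0)\qquad\text{for all }t>0,\ x\in\R^N,\ x_0\in U_\rho.
\]
Now every $x\in U_\rho+B_{ct}$ can be written as $x=x_0+y$ with $x_0\in U_\rho$ and $y\in B_{ct}$, so taking the infimum over such decompositions gives
\[
\inf_{x\in U_\rho+B_{ct}}u(t,x)\;\ge\;\inf_{y\in B_{ct}}v(t,y)\;=\;\min_{\overline{B_{ct}}}v(t,\cdot).
\]

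Since $v_0$ satisfies the threshold condition~\eqref{hyptheta} at $x_0=0$, Proposition~\ref{pro1} applies to $v$: for every $c\in(0,c^*)$, $\min_{\overline{B_{ct}}}v(t,\cdot)\to 1$ as $t\to+\infty$. Combined with the bound $u\le 1$, this yields $\inf_{x\in U_\rho+B_{ct}}u(t,x)\to 1$ as $t\to+\infty$, completing the proof. There is no real obstacle here; the lemma is essentially a uniform-in-$x_0$ restatement of the spreading conclusion~\eqref{c<c*} of Proposition~\ref{pro1}, obtained for free because~\eqref{homo} is homogeneous in $x$ and $U_\rho$ encodes a uniform lower bound by translated copies of the same radially symmetric sub-initial datum.
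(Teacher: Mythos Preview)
Your proof is correct and follows essentially the same approach as the paper's: both introduce a reference solution $v$ emanating from a ball of radius~$\rho$ at the origin, compare $u$ with its translates $v(t,\cdot-x_0)$ for $x_0\in U_\rho$, and invoke~\eqref{c<c*} from Proposition~\ref{pro1}. The only cosmetic differences are that the paper takes $v_0=\1_{B_\rho}$ rather than $\theta\,\1_{B_\rho}$ (either works, since $u_0=\1_U\ge\1_{B_\rho(x_0)}\ge\theta\,\1_{B_\rho(x_0)}$) and phrases the conclusion via an $\e$-argument with an arbitrary level $\lambda<1$, whereas you write the infimum bound directly.
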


\begin{proof}
Let $v$ be the solution to~\eqref{homo} emerging from the initial datum $v_0=\1_{B_\rho}$. Take $c\in(0,c^*)$ and $\lambda<1$. By~\eqref{c<c*} in Proposition~\ref{pro1}, there exists $T>0$ such that
$$\forall\,t\geq T,\ \ \forall\,x\in  B_{ct},\quad v(t,x)>\lambda.$$
Now, for any $x_0\in U_\rho$, it holds that $u_0\geq v_0(\.-x_0)$ in $\R^N$ and therefore, by the parabolic comparison principle,
$$\forall\,t\geq T,\ \ \forall\,x\in  B_{ct}(x_0),\quad u(t,x)\geq v(t,x-x_0)>\lambda.$$
This is true for any $x_0\in U_\rho$, hence the result follows from the arbitrariness of $\lambda<1$.
\end{proof}

Unlike the previous case, the set where $u\to0$ cannot be obtained from Proposition~\ref{pro1}. Instead, we make use of a new type of supersolutions whose level sets are retracting spheres. In the strong Fisher-KPP case (i.e.~when $s\mapsto f(s)/s$ is positive and nonincreasing in~$(0,1)$), such supersolutions could be obtained as the sums of a finite number of solutions, see \cite[Lemma~4.2]{HR2}. In order to handle the general case, we exploit the planar solutions provided by Proposition~\ref{pro:fronts}~(ii). Actually, the functions we will construct are supersolutions to~\eqref{homo} in a {\em generalized} sense: namely, they are functions $v\in C^0([0,T]\times\R^N)$, with $T>0$, such that, if a solution~$u$ to~\eqref{homo} satisfies $0\le u(0,\.)\le\psi(0,\cdot)$ in~$\R^N$, then $u(t,\cdot)\le\psi(t,\cdot)$ in $\R^N$ for all~$t\in[0,T]$. This is the key technical result of the paper.

\begin{proposition}\label{proradial}
Assume that Hypothesis $\ref{hyp}$ holds, and let $c^*>0$ be given by Proposition~$\ref{pro1}$. Then, for any $c>c^*$ and~$\lambda>0$, there exist $R>0$ $($depending on $f$, $N$, $c$ and $\lambda$$)$ and a family of functions~$(v^T)_{T>0}$ such that, for each~$T>0$,~$v^T$~is a positive generalized supersolution to~\eqref{homo} in $[0,T]\times\R^N$ and satisfies
\Fi{vT10}\left\{\baa{ll}
v^T(0,x)\geq 1, & \forall\,|x|\geq R+cT,\vspace{3pt}\\
v^T(t,0)<\lambda, & \forall\,t\in[0,T].\eaa\right.
\Ff
\end{proposition}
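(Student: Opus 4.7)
I would construct $v^T$ as a single explicit radial function built from a slightly subcritical front, with a smoothing of $|x|$ near the origin that absorbs the singularity of the radial Laplacian. Pick $c'\in(c^*,c)$ and let $\overline\varphi$ be the front profile of speed $c'$ given by Proposition~\ref{pro:fronts}~(ii), so that $\overline\varphi''+c'\overline\varphi'+f(\overline\varphi)=0$, $\overline\varphi(0)=1$, $\overline\varphi(+\infty)=0$, $\overline\varphi$ is decreasing with $m^{-1}\overline\varphi\le-\overline\varphi'\le m\overline\varphi$, and $\overline\varphi''\ge 0$ on $[b,+\infty)$. Set $\eta:=N/(c-c')$ and, for each $T>0$,
\[
v^T(t,x):=\overline\varphi\bigl(r(t)-\rho(x)\bigr),\qquad r(t):=R+c(T-t),\ \ \rho(x):=\sqrt{|x|^2+\eta^2},
\]
where $R>0$ is a large constant (depending only on $f$, $N$, $c$ and $\lambda$) to be fixed at the end.

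The core of the argument is a pointwise computation of the parabolic residual. Using $|\nabla\rho|^2=1-\eta^2/\rho^2$, $\Delta\rho=(N-1)/\rho+\eta^2/\rho^3$, and the profile identity $f(\overline\varphi)=-\overline\varphi''-c'\overline\varphi'$ (which extends to $z\le 0$ thanks to the extension of $f$ by $0$ and the explicit form of $\overline\varphi$ there), a direct calculation gives, with $z:=r(t)-\rho$,
\[
\partial_t v^T-\Delta v^T-f(v^T)=\overline\varphi''(z)\,\frac{\eta^2}{\rho^2}\;+\;\overline\varphi'(z)\Bigl[c'-c+\frac{N-1}{\rho}+\frac{\eta^2}{\rho^3}\Bigr].
\]
The tuning $\eta=N/(c-c')$ is chosen precisely so that the bracket is $\le 0$: the function $\rho\mapsto(N-1)/\rho+\eta^2/\rho^3$ is decreasing on $(0,+\infty)$ and equals $N/\eta=c-c'$ at $\rho=\eta$, and $\rho(x)\ge\eta$ holds everywhere; coupled with $\overline\varphi'<0$, the second summand is nonnegative throughout $\R^N$. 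For the first summand, $\overline\varphi''\ge 0$ on $(-\infty,0]\cup[b,+\infty)$ (the exponential form of $\overline\varphi$ on $(-\infty,0]$ gives $\overline\varphi''\ge 0$ there, while Proposition~\ref{pro:fronts}~(ii) supplies the bound on $[b,+\infty)$), so the inequality is automatic outside the thin shell $\{z\in[0,b]\}$. Inside this shell, $\rho=r(t)-z\ge R-b$ is large, $|\overline\varphi'(z)|\ge m^{-1}\overline\varphi(b)>0$, and one checks that $(c-c')-(N-1)/\rho-\eta^2/\rho^3\ge(c-c')/2$ as soon as $\rho\ge 2\eta$. Setting $M_2:=\max_{[0,b]}|\overline\varphi''|$ and taking $R$ large enough so that $R\ge b+2\eta$ and $M_2\eta^2/(R-b)^2\le m^{-1}\overline\varphi(b)(c-c')/2$, the positive second summand dominates the possibly negative first one on the shell. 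Hence $v^T\in C^2([0,T]\times\R^N)$ is a classical, and therefore generalized, supersolution of~\eqref{homo}.

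The conditions \eqref{vT10} then follow at once: since $\sqrt{|x|^2+\eta^2}\ge|x|$ and $\overline\varphi$ is decreasing, $v^T(0,x)=\overline\varphi(R+cT-\sqrt{|x|^2+\eta^2})\ge\overline\varphi(0)=1$ whenever $|x|\ge R+cT$; and $v^T(t,0)=\overline\varphi(r(t)-\eta)\le\overline\varphi(R-\eta)$ for all $t\in[0,T]$, so a last enlargement of $R$, using $\overline\varphi(+\infty)=0$, forces this below $\lambda$. All the constraints on $R$ depend only on $f$, $N$, $c$ and $\lambda$, as required. The main obstacle is the curvature singularity $(N-1)/|x|$ in the radial Laplacian, which prevents the naive retracting ansatz $\overline\varphi(r(t)-|x|)$ from being a supersolution near the origin for any choice of $r(t)$; the smoothing $\rho(x)=\sqrt{|x|^2+\eta^2}$, together with the precise matching $N/\eta=c-c'$ between the smoothing scale and the excess speed, is the technical device that resolves this. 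The residual ``concavity shell'' $\{z\in[0,b]\}$, where the sign of $\overline\varphi''$ is not controlled by Proposition~\ref{pro:fronts}~(ii), is then absorbed by the simple coercivity estimate above, which only asks $R$ to be large independently of $T$.
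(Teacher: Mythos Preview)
Your argument is correct and genuinely different from the paper's construction. The paper builds the supersolution in two stages: first a one-dimensional generalized supersolution obtained as the \emph{minimum} of a shifted front $\overline\varphi(r-c'(t-T)+L)$ and a symmetrized, exponentially-tilted function $\psi(r-c'(t-T)+L)+\psi(-r-c'(t-T)+L)$ with $\psi(z)=\overline\varphi(z)e^{-\beta(z-Z)}$; then a radial extension that is constant on a core ball $B_{R'}$ and equals the one-dimensional profile outside. This produces only a generalized (piecewise, $W^{2,\infty}$) supersolution and requires a careful tuning of the parameters $\beta,Z,L,R'$.

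Your route bypasses all of this by a single smooth classical supersolution $\overline\varphi\bigl(R+c(T-t)-\sqrt{|x|^2+\eta^2}\bigr)$. The regularized radius $\rho=\sqrt{|x|^2+\eta^2}$ with the precise matching $\eta=N/(c-c')$ kills the curvature term globally, and the convexity of $\overline\varphi$ for $z\le 0$ and $z\ge b$ (the former from the explicit exponential form on $(-\infty,0]$) leaves only the compact shell $\{0\le z\le b\}$, where the uniformly positive drift term absorbs the bounded second-derivative contribution once $R$ is large. The payoff is a shorter, one-shot construction yielding a classical rather than merely generalized supersolution; the paper's approach, on the other hand, makes the ``retracting sphere'' picture more explicit through the minimum of two concrete moving profiles and separates the one-dimensional mechanism from the radial lift.
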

 
\begin{proof}
We start with constructing the desired family of supersolutions in dimension~$1$. We then use them to construct radially symmetric supersolutions in higher dimension. But before doing so, we introduce some auxiliary notations. For any
$$c'>c''>c^*,$$
consider the function $\ol\varphi$ provided by Proposition~\ref{pro:fronts}~(ii) associated with $c''$. Let $m>1$ be given by~\eqref{olphi}, and let $s_0\in(0,1)$ be such that
\Fi{f>f'2}
\forall\,s\in(0,s_0),\quad |f(s)-f'(0)s|\leq\frac{c'-c''}{4m}\, s.
\Ff
Call then $Z>0$ the quantity where $\ol\vp(Z)=s_0$. For $\beta>0$ we define 
$$\psi(z):=\ol\vp(z)\,e^{-\beta(z-Z)}\ \hbox{ for }z\in\R.$$
This function $\psi$ is of class $C^2(\R)$ and it satisfies in $\R$
$$-\psi''-(c'-\beta)\psi'= \big(f(\ol\vp)-(c'-3\beta-c'')\ol\vp'+\beta(c'-2\beta)\ol\vp\big)\,e^{-\beta(z-Z)}.$$
Then, because of~\eqref{olphi}, we can choose $\beta\in(0,c'-c'')$ small enough so that
\Fi{psisuper}
-\psi''-(c'-\beta)\psi'> f(\ol\vp)\,e^{-\beta(z-Z)}+\frac{c'-c''}{2m}\psi.
\Ff
With $b>0$ as in~\eqref{olphi}, we also choose arbitrarily large real numbers $L$ and $R'$ such that
\Fi{defR0}
L>\max\Big(Z+\frac{\log 2}{\beta},b\Big)\ \hbox{ and }\ R'>\frac{N-1}{\beta}.
\Ff
 
\medskip
\noindent{\it{Step 1: the $1$-dimensional case.}} Our goal is to connect $\ol\vp$ with its reflection $\ol\vp(-\cdot)$, by using an even function which is steeper than $\ol\vp$ at some point. This will be achieved through the function $\psi$ defined above. Then, the symmetrized function $\psi(x-c't)+\psi(-x-c't)$ will be a supersolution where it is smaller than $s_0$, and we take the minimum between suitable translations of the functions $\ol\vp(x-c't)$ and $\psi(x-c't)+\psi(-x-c't)$, which will be a (generalized) supersolution for $x\leq0$. Next, we want the minimum to be achieved by the latter function at $x=0$, so that we can extend the supersolution to the whole line by even reflection.

More precisely, we consider an arbitrary $T>0$ and we call
\Fi{defv12}\left\{\baa{ll}
v_1(t,r) & \!\!\!:=\ol\vp(r-c'(t-T)+L),\vspace{3pt}\\
v_2(t,r) & \!\!\!:=\psi(r-c'(t-T)+L)+\psi(-r-c'(t-T)+L),\vspace{3pt}\\
v(t,r) & \!\!\!:=\min\big(v_1(t,r),v_2(t,r)\big),\eaa\right.
\Ff
for $(t,r)\in[0,T]\times\R$, see figure \ref{fig:retracting_super}.
\begin{figure}[H]
	\begin{center}
		\includegraphics[height=4cm]{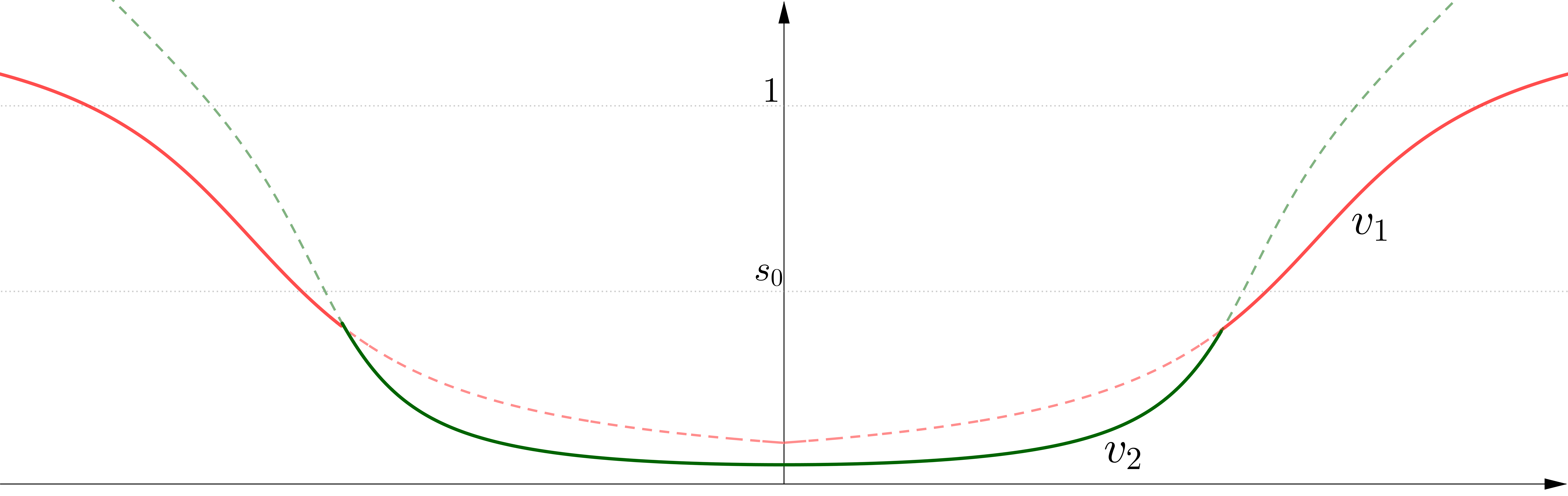}
		\caption{The definition of the generalized supersolution~$v=\min\big(v_1,v_2\big)$.}
		\label{fig:retracting_super}
	\end{center}
	\vspace{-7pt}
\end{figure}
The functions $v_1,v_2$ are positive. Moreover, we see that
$$\partial_tv_1(t,r)-\partial_{rr}v_1(t,r)-f(v_1(t,r))=(c''-c')\,\ol\vp'(r-c'(t-T)+L)>0\ \hbox{ in }[0,T]\times\R$$
since $c''<c'$ and $\ol\vp'<0$ in $\R$, hence $v_1$ is a supersolution to~\eqref{homo} in $[0,T]\times\R$.

The definition of $\psi$ and the positivity of $\ol\vp$ also imply that
$$\forall\,0\le t\le T,\ \forall\,r\leq c'(t-T)-L+Z,\quad v_2(t,r)>v_1(t,r).$$
This means that if there exists $(\bar t,\bar r)\in[0,T]\times(-\infty,0]$ where $v(\bar t,\bar r)=v_2(\bar t,\bar r)$, then necessarily $\bar r>c'(\bar t-T)-L+Z$. Together with the fact that $\ol\vp$ is decreasing, it follows that, for all $(\bar t,\bar r)\in[0,T]\times(-\infty,0]$,
\Fi{<s0}
v(\bar t,\bar r)=v_2(\bar t,\bar r) \implies  \left\{\baa{l}
0<v_2(\bar t,\bar r)\leq v_1(\bar t,\bar r)=\ol\vp(\bar r-c'(\bar t-T)+L)<s_0,\vspace{3pt}\\
0<\ol\vp(-\bar r-c'(\bar t-T)+L)\le\ol\vp(\bar r-c'(\bar t-T)+L)<s_0.\eaa\right.
\Ff
On the other hand, by~\eqref{psisuper} and the negativity of $\psi'$ we have that
$$\baa{rcl}
\partial_t v_2(\bar t,\bar r)-\partial_{rr}v_2(\bar t,\bar r)-\beta|\partial_{r}v_2(\bar t,\bar r)| & > & f(\ol\vp(\bar r-c'(\bar t-T)+L))\,e^{-\beta(\bar r-c'(\bar t-T)+L-Z)}\vspace{3pt}\\
& & +f(\ol\vp(-\bar r-c'(\bar t-T)+L))\,e^{-\beta(-\bar r-c'(\bar t-T)+L-Z)}\vspace{3pt}\\
& & \displaystyle+\frac{c'-c''}{2m}\,v_2(\bar t,\bar r).\eaa$$
Hence, estimating $f(\ol\vp(\pm\bar r-c'(\bar t-T)+L))$ by~\eqref{f>f'2}, and then using~\eqref{f>f'2} again, we eventually derive
\Fi{v2super}
\partial_t v_2-\partial_{rr}v_2-\beta|\partial_{r}v_2|>f(v_2)\quad 
\text{in }\;\big\{(t,r)\in[0,T]\times(-\infty,0]:v(t,r)=v_2(t,r)\big\}.
\Ff
At the point $r=0$ we compute, for $0\leq t\leq T$, 
$$v_2(t,0)=2\,\ol\vp(-c'(t-T)+L)\,e^{-\beta(-c'(t-T)+L-Z)}\leq 2\,v_1(t,0)\,e^{-\beta(L-Z)}.$$
Since $\beta(L-Z)>\log2$ by~\eqref{defR0}, we have that
\Fi{vt0}
\forall\,t\in[0,T],\ \ v(t,0)=v_2(t,0)<v_1(t,0)=\ol\vp(-c'(t-T)+L)\leq\ol\vp(L).
\Ff
Observe that the function $r\mapsto v_2(t,r)$ is even  and that $v(t,r)$ is equal to $v_2(t,r)$, hence symmetric with respect to~$r$, in a neighborhood of $r=0$, for each $0\leq t\leq T$. We deduce 
\Fi{v't0}
\forall\,t\in[0,T],\ \ \partial_r v(t,0)=\partial_r v_2(t,0)=0.
\Ff
Remember also that $v_1$ is a supersolution to~\eqref{homo} in~$[0,T]\times\R$. All these facts imply that, if we restrict~$v(t,r)$ to $r\leq0$ and we take its even reflection around $r=0$, we obtain a generalized supersolution to~\eqref{homo} in $[0,T]\times\R$, being the minimum of two classical supersolutions. We also see that, for every $0\leq t\leq T$,
\Fi{v''t0}\baa{rl}
\partial_{rr} v(t,0) & \!\!\!=\partial_{rr} v_2(t,0)=2\,\psi''(-c'(t-T)+L)\vspace{3pt}\\
& \!\!\!=2\,\big(\ol\vp''(-c'(t\!-\!T)\!+\!L)-2\beta\ol\vp'(-c'(t\!-\!T)\!+\!L)+\beta^2\ol\vp(-c'(t\!-\!T)\!+\!L)\big)\vspace{3pt}\\
& \qquad\qquad\times e^{-\beta(-c'(t-T)+L-Z)}\vspace{3pt}\\
& \!\!\!>0\eaa
\Ff
since $\ol\vp'<0$, $\ol\vp>0$ and $L>b$ by~\eqref{defR0}, where $b>0$ from~\eqref{olphi} is such that $\ol\vp''\geq 0$ in~$[b,+\infty)$.

\medskip
\noindent{\it{Step 2: the case of dimension $N\ge 2$.}} Consider the function $v$ defined before. With~$R'>0$ given by~\eqref{defR0}, we define, for $T>0$, a continuous function $v^T$ in $[0,T]\times\R^N$ as follows:
$$v^T(t,x):=\begin{cases}
v(t,0) & \text{if }t\in[0,T]\hbox{ and }|x|\leq R',\vspace{3pt}\\
v(t,R'-|x|) & \text{if }t\in[0,T]\hbox{ and }|x|> R'.\\
\end{cases}$$
We want to show that $v^T$ is a (generalized) supersolution to~\eqref{homo} in $[0,T]\times\R^N$.

We start with checking this in the region $|x|>R'$. Recall that $v$ is defined in~\eqref{defv12} as the minimum between~$v_1$ and~$v_2$. A direct computation reveals that the function~$v_1(t,R'-|x|)<1$ fulfills
$$\baa{l}
(\partial_t-\Delta) \big(v_1(t,R'-|x|)\big)-f(v_1(t,R'-|x|))\vspace{3pt}\\
\qquad\displaystyle=\partial_t v_1(t,R'-|x|)-\partial_{rr}v_1(t,R'-|x|)+\frac{N-1}{|x|}\partial_r v_1(t,R'-|x|)-f(v_1(t,R'-|x|))\vspace{3pt}\\
\qquad\displaystyle=\bigg(c''-c'+\frac{N-1}{|x|}\bigg)\,\ol\vp'(R'-|x|-c'(t-T)+L)\vspace{3pt}\\
\qquad>0\eaa$$
for all $t\in[0,T]$ and $|x|>R'$, since $\ol\vp'<0$ in $\R$ and $c''-c'+(N-1)/R'<0$ by~\eqref{defR0} together with $0<\beta<c'-c''$. We now turn to the function $v_2$. At any point $(\bar t,\bar x)\in[0,T]\times(\R^N\setminus\overline{B_{R'}})$ such that $v(\bar t,R'-|\bar x|)=v_2(\bar t,R'-|\bar x|)$, we deduce from~\eqref{defR0} and~\eqref{v2super} that
$$\baa{l}
(\partial_t-\Delta)\big(v_2(t,R'-|x|)\big)\big|_{\bar t,\,\bar x}-f(v_2(\bar t,R'-|\bar x|))\vspace{3pt}\\
\qquad\qquad\qquad\qquad\displaystyle>\beta\,|\partial_r v_2(\bar t,R'-|\bar x|)|+\frac{N-1}{|\bar x|}\,\partial_r v_2(\bar t,R'-|\bar x|)\geq0.\eaa$$
We have thereby shown that $v^T$ is a supersolution to~\eqref{homo} outside the ball $\overline{B_{R'}}$. Observe now that~\eqref{v't0} implies that $v^T(t,\.)$ is of class $W^{2,\infty}$ in a neighborhood of $\partial B_{R'}$, for each~$t\in[0,T]$. Next, for $t\in[0,T]$ and $|x|<R'$, using~\eqref{v2super} and~\eqref{v''t0} we get
$$\partial_t v^T(t,x)-\Delta v^T(t,x)=\partial_t v(t,0)>\partial_{rr}v(t,0)+f(v(t,0))>f(v^T(t,x)).$$
Summing up, the function $v^T$ is a positive generalized supersolution~to~\eqref{homo}~in~$[0,T]\!\times\!\R^N$. We further see from the definition of $v$ and from~\eqref{olphi} and~\eqref{vt0} that $v^T$ satisfies
\Fi{vT10'}\left\{\baa{ll}
v^T(0,x)\geq 1, & \forall\,|x|\geq R'+L+c'T,\vspace{3pt}\\
v^T(t,0)<\ol\vp(L), & \forall\,t\in[0,T].\eaa\right.
\Ff

\noindent{\it{Step 3: conclusion.}} Consider any $c>c^*$ and $\lambda>0$. Let any $c'$ and $c''$ be such that $c>c'>c''>c^*$, and let $s_0$, $Z$, $\beta$, $L$ and $R'$ be the positive parameters (depending on~$f$, $N$, $c'$ and~$c''$, whence on $f$, $N$ and $c$, since $c'$ and $c''$ depend on $c$ and $c^*$ while $c^*$ depends on~$f$ only) given as in~\eqref{f>f'2}-\eqref{defR0}. Without loss of generality, one can also assume now that~$L$ is large enough (depending also on $\lambda$) so that $\ol\vp(L)\le\lambda$. Let $(v^T)_{T>0}$ be the functions defined as in Step~2 above. Since $c>c'$, the conclusion~\eqref{vT10} with $R=R'+L>0$ follows from~\eqref{vT10'}. The proof of Proposition~\ref{proradial} is thereby complete.
\end{proof}

The next simple lemma concerns the sets $\mc{B}(U)$ and $\mc{U}(U)$ introduced at the beginning of Section~\ref{sec:FG}.

\begin{lemma}\label{lem:eB}
Let $U$ be a non-empty subset of $\R^N$ satisfying~\eqref{hyp:U} for some $\rho>0$. Then for every~$e\in\mc{B}(U)$, there holds that
\be\label{conceB}
\liminf_{\tau\to+\infty}\frac{\dist(\tau e,U)}{\tau}=\inf_{\xi\in\mc{U}(U),\,\xi\.e\ge0}\,\sqrt{1-(\xi\.e)^2}>0,
\ee
with the convention that the right-hand side of the equality is $1$ if there is no $\xi\in\mc{U}(U)$ satisfying~$\xi\.e\ge0$.
\end{lemma}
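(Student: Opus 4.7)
My plan is to establish the equality by proving the two inequalities separately, and then to deduce the positivity from $e\in\mc B(U)$. Write $M$ for the right-hand side of \eqref{conceB}, using the convention $M=1$ when no $\xi\in\mc U(U)$ satisfies $\xi\cdot e\ge0$. The bound $\liminf\le M$ will follow from the definition of $\mc U(U)$ alone via a projection onto the ray $\R^+e$; the reverse bound is the substantial step, and it is the one that forces the use of~\eqref{hyp:U}, in order to guarantee that a limiting direction extracted from an almost-minimizing sequence actually lies in $\mc U(U)$.

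For the upper bound, I would fix $\xi\in\mc U(U)$ with $\xi\cdot e>0$ and set $s_\tau:=\tau(\xi\cdot e)$. The defining property of $\mc U(U)$ supplies $x_\tau\in U$ with $|x_\tau-s_\tau\xi|=o(\tau)$, and the orthogonal identity $|\tau e-s_\tau\xi|^2=\tau^2\bigl(1-(\xi\cdot e)^2\bigr)$ combined with the triangle inequality yields $\dist(\tau e,U)/\tau\le\sqrt{1-(\xi\cdot e)^2}+o(1)$. Taking the infimum over admissible $\xi$ delivers $\liminf\le M$. The degenerate case $\xi\cdot e=0$, and the case when no admissible $\xi$ exists, both reduce to the trivial estimate $\dist(\tau e,U)/\tau\le(\tau+|p|)/\tau\to1$ for any fixed $p\in U$.

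For the lower bound, I would pick $\tau_n\to+\infty$ with $\dist(\tau_n e,U)/\tau_n\to L:=\liminf$, and $y_n\in U$ with $|y_n-\tau_n e|/\tau_n\to L$. Since $M\le1$, one may assume $L<1$. Up to a subsequence, $y_n/\tau_n\to\zeta=e+w$ with $|w|=L$ and $|\zeta|\ge 1-L>0$, so $\xi:=\zeta/|\zeta|\in\Sph$ is well defined and $\xi_n:=y_n/|y_n|\to\xi$. Because $|y_n|\to+\infty$ and $\dist(|y_n|\xi,U)/|y_n|\le|\xi-\xi_n|\to0$, the direction $\xi$ cannot belong to $\mc B(U)$; by \eqref{hyp:U} it must therefore lie in $\mc U(U_\rho)\subset\mc U(U)$. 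A direct computation using $|\zeta|^2=1+2w\cdot e+L^2$ gives $\xi\cdot e=(1+w\cdot e)/|\zeta|>0$ and
\[
1-(\xi\cdot e)^2 \;=\; \frac{L^2-(w\cdot e)^2}{|\zeta|^2} \;\le\; L^2,
\]
the inequality being equivalent to the perfect-square identity $(w\cdot e+L^2)^2\ge0$. Hence $M\le\sqrt{1-(\xi\cdot e)^2}\le L$, as required.

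Positivity of $M$ follows at once: since $\mc U(U)$ is closed in $\Sph$, the admissible set $\{\xi\in\mc U(U):\xi\cdot e\ge0\}$ is compact, and $M=0$ would force some $\xi$ in it with $\xi\cdot e=1$, namely $\xi=e\in\mc U(U)$, contradicting the disjointness of $\mc U(U)$ and $\mc B(U)$ which is built into their definitions. The hardest part I anticipate is the sharp angle estimate in the lower bound: a naive triangle-inequality argument on $\xi_n=y_n/|y_n|$ only yields $\sqrt{1-(\xi_n\cdot e)^2}\lesssim L/(1-L)$, and the optimal value $L$ emerges only by retaining the cross-term $2\tau_n(y_n-\tau_n e)\cdot e$ in the full expansion of $|y_n|^2$ and then exploiting the perfect-square identity displayed above.
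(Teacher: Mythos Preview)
Your proof is correct and follows essentially the same strategy as the paper's: the upper bound via projection onto the ray $\R^+\xi$, and the lower bound by extracting a limiting direction $\xi$ from a minimizing sequence, showing $\xi\notin\mc{B}(U)$ hence $\xi\in\mc{U}(U_\rho)\subset\mc{U}(U)$ via~\eqref{hyp:U}, and then the sharp angle estimate via a perfect-square identity. The only cosmetic differences are that the paper carries out the algebra at the sequence level (its hidden square is $((x_n\cdot e)\tau_n-|x_n|^2)^2\ge0$, which in the limit becomes exactly your $(w\cdot e+L^2)^2\ge0$), and it deduces positivity simply from $\bar\delta>0$ (by definition of $\mc{B}(U)$) together with the already-established equality, rather than by your direct compactness argument on $\mc{U}(U)$.
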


\begin{proof}
Call
$$\bar\delta:=\liminf_{\tau\to+\infty}\frac{\dist(\tau e,U)}{\tau},\qquad m:=\inf_{\xi\in\mc{U}(U),\,\xi\.e\ge0}\,\sqrt{1-(\xi\.e)^2},$$
with the convention that $m=1$ if there is no $\xi\in\mc{U}(U)$ satisfying~$\xi\.e\ge0$. The definition of $\mc{B}(U)$ yields $\bar\delta>0$. 

We start with proving $\bar\delta\leq m$. If there exists no $\xi\in\mc{U}(U)$ satisfying $\xi\.e>0$ then~$m=1\geq\bar\delta$ because $U\neq\emptyset$. Suppose now that there is $\xi\in\mc{U}(U)$ such that~$\xi\.e>0$. The definition of $\mc{U}(U)$ yields the existence of a family of points $(x_\tau)_{\tau>0}$ in~$U$ such that
$$\big|\xi-\frac{x_\tau}{\tau}\big|\to0\as\tau\to+\infty.$$
It follows that
$$\displaystyle\bar\delta\leq\lim_{\tau\to+\infty}\frac{\Big|\frac\tau{\xi\.e}e-x_\tau\Big|}{\frac\tau{\xi\.e}}=\lim_{\tau\to+\infty}\big|e-\frac{\xi\.e}\tau x_\tau\big|=|e-(\xi\.e)\xi|=\sqrt{1-(\xi\.e)^2}.$$
Since this holds for any $\xi\in\mc{U}(U)$ such that $\xi\cdot e>0$, the inequality $\bar\delta\leq m$ follows.

Let us pass now to the proof of the reverse inequality $m\le\bar\delta$. Remember first that~$0\le\bar\delta\le1$. If $\bar\delta=1$ it trivially holds because $m\leq1$. Suppose that $\bar\delta<1$. There exist a positive sequence $\seq{\tau}$ diverging to $+\infty$ and a sequence of points $\seq{x}$ in $U$ satisfying
$$\Big|e-\frac{x_n}{\tau_n}\Big|<\bar\delta+\frac1n\ \hbox{ \ for all $n\in\N$}.$$
Because $\bar\delta<1$, we see that $|x_n|\to+\infty$ as $n\to+\infty$, hence we can assume that $x_n\neq0$ for all $n\in\N$. We further deduce from the above inequality that
$$\frac{(x_n\.e)^2}{|x_n|^2}\geq2\frac{x_n\.e}{\tau_n}-\frac{|x_n|^2}{\tau_n^2}=1-\Big|e-\frac{x_n}{\tau_n}\Big|^2>1-\Big(\bar\delta+\frac1n\Big)^2.$$
hence, calling $\xi_n:=x_n/|x_n|$,
$$\xi_n\.e\geq\sqrt{1-\Big(\bar\delta+\frac1n\Big)^2}.$$
Thus, the limit $\ol\xi\in\Sph$ of a converging subsequence of $\seq{\xi}$ satisfies $\ol\xi\.e\geq\sqrt{1-\bar\delta^2}$. Furthermore,  there holds that
$$\frac{\big||x_n|\ol\xi-x_n\big|}{|x_n|}=|\ol\xi-\xi_n|\to0\as n\to+\infty,$$
which means that $\ol\xi\notin\mc{B}(U)$. Hence, by~\eqref{hyp:U}, $\ol\xi\in \mc{U}(U_\rho)\subset\mc{U}(U)$. We eventually derive
$$\sup_{\xi\in\mc{U}(U)}\,\xi\.e\geq\ol\xi\cdot e\geq\sqrt{1-\bar\delta^2}>0,$$
that is, $\bar\delta\geq m$. The proof of Lemma~\ref{lem:eB} is thereby complete.
\end{proof}

Thanks to Lemma~\ref{lem:eB}, and using the family of supersolutions provided by Proposition~\ref{proradial}, one can finally establish a set where the solution converges to $0$.

\begin{lemma}\label{lem:BU}
Assume that Hypothesis~$\ref{hyp}$ holds, let $c^*>0$ be given by Proposition~$\ref{pro1}$, and let $u$ be the solution of~\eqref{homo}-\eqref{defu0} with $U\neq\emptyset$ satisfying~\eqref{hyp:U} for some $\rho>0$. Assume that there exists $e\in\mc{B}(U)$. Then, for any $w>w(e)$, where $w(e)$ is given by~\eqref{FGgeneral}, in the cone
$$\mc{C}:=\bigcup_{\tau>1}B_{c^*(\tau-1)}(\tau w e),$$
there holds that
$$\sup_{x\in \mc{C}}\,u(t,tx)\to0\as t\to+\infty.$$
\end{lemma}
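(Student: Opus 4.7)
The plan is to dominate the solution $u$ at the final time $t$ by a translate of the retracting-sphere supersolution produced by Proposition~\ref{proradial}, centered at the evaluation point $tx$; for this to work, the initial datum $u_0=\1_U$ must vanish in a sufficiently large ball around $tx$, uniformly over $x\in\mc{C}$. The key preliminary is a quantitative version of the hypothesis $w>w(e)$. Lemma~\ref{lem:eB} combined with the variational formula~\eqref{FGgeneral} yields
$$\liminf_{\sigma\to+\infty}\frac{\dist(\sigma e,U)}{\sigma}=\frac{c^*}{w(e)},$$
and rescaling by $w$ gives $\liminf_{s\to+\infty}\dist(swe,U)/s=wc^*/w(e)>c^*$. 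Hence there exist $\e_0>0$ and $S_0>0$ such that $\dist(swe,U)\ge(c^*+\e_0)\,s$ for every $s\ge S_0$.

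Next, fix an arbitrary $\lambda\in(0,1)$ and choose $c\in(c^*,c^*+\e_0)$. Proposition~\ref{proradial} provides $R>0$ and, for each $T>0$, a positive generalized supersolution $v^T$ of~\eqref{homo} on $[0,T]\times\R^N$ satisfying $v^T(0,\cdot)\ge 1$ outside the ball $B_{R+cT}$ and $v^T(\cdot,0)<\lambda$ on $[0,T]$. For $x\in\mc{C}$, pick $\tau>1$ with $|x-\tau we|\le c^*(\tau-1)$ and set $s:=t\tau\ge t$. Once $t\ge S_0$, we also have $s\ge S_0$ uniformly in $\tau>1$, and the triangle inequality yields
$$\dist(tx,U)\ge\dist(swe,U)-|tx-swe|\ge(c^*+\e_0)s-c^*(s-t)=\e_0 s+c^*t\ge(c^*+\e_0)\,t.$$
Because $c<c^*+\e_0$, this forces $\dist(tx,U)\ge R+ct$ as soon as $t$ is large enough (depending only on $R$, $\e_0$ and $c-c^*$), and the bound is uniform in $x\in\mc{C}$.

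With this margin in hand the comparison step is immediate: the previous estimate gives $v^t(0,y-tx)\ge 1\ge u_0(y)$ for every $y\in U$, and $v^t(0,y-tx)\ge 0=u_0(y)$ elsewhere, so $u_0\le v^t(0,\cdot-tx)$ on $\R^N$. The generalized comparison principle attached to $v^t$ then yields $u(t,tx)\le v^t(t,0)<\lambda$ for every such large $t$ and every $x\in\mc{C}$, whence $\sup_{x\in\mc{C}}u(t,tx)\le\lambda$ for $t$ large. Since $\lambda\in(0,1)$ was arbitrary, the claimed convergence follows. The substantive difficulty was absorbed into Proposition~\ref{proradial}; at the level of this lemma, the only delicate point is the uniformity of the geometric estimate across the cone, which is ensured by the structural bound $s=t\tau\ge t$ valid for every $\tau>1$.
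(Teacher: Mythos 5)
Your proof is correct and follows essentially the same route as the paper: quantify $w>w(e)$ via Lemma~\ref{lem:eB} and formula~\eqref{FGgeneral}, then dominate $u_0$ by a translate of the retracting supersolution $v^t$ from Proposition~\ref{proradial} centered at $tx$, and conclude by the (generalized) comparison principle. The only difference is cosmetic: the paper introduces an intermediate parameter $k\in(c^*/w,\,c^*/w(e))$ and a change of variables $s=\tau/T$, whereas you work directly with $s=t\tau$ and the margin $\e_0>0$, which streamlines the geometric estimate while landing on the same uniform bound $\dist(tx,U)\ge R+ct$ for $t$ large.
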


\begin{proof}
Consider $U,u,\rho,e,w$ and $\mc{C}$ as in the statement. Because $w>w(e)>0$, there exists a real number~$k$ satisfying
\be\label{choicek}
0<\frac{c^*}w<k<\frac{c^*}{w(e)}=\inf_{\su{\xi\in\mc{U}(U)}{\xi\.e\ge0}}\sqrt{1-(\xi\.e)^2}.
\ee
Then, by Lemma~\ref{lem:eB}, we can find $\tau_1>0$ such that
\Fi{disteU}
\forall\,\tau\geq\tau_1,\quad{\dist(\tau e,U)}\geq k{\tau}.
\Ff
Take $c\in(c^*,kw)$ and $\lambda>0$. Consider the associated constant $R>0$ and the functions~$(v^T)_{T>0}$ provided by Proposition~\ref{proradial}. By~\eqref{disteU}, for all $T>0$, there holds that
$$\forall\,\tau>\tau_1+\frac{R+cT}{k}, \ \ \forall\,y\in B_{{k}\tau-{R-cT}}(\tau e),\quad\dist(y,U)\geq R+cT,$$
hence, for $\tau$ and $y$ as above, $u$ is less than or equal to the positive function $v^T(\.,\.-y)$ at time $0$, due to~\eqref{vT10}, and therefore the comparison principle yields~$u(T,y)\leq v^T(T,0)<\lambda$. We~rewrite this inequality using $t=T$, $s=\tau/T$, $x=y/T$, that is,
\be\label{tsx}
\forall\,t>0,\ \ \forall\,s>\frac{c}k+\frac1t\Big({\tau_1}+\frac{R}{k}\Big), \ \ \forall\,x\in B_{{k}s-c-R/t}(s e),\quad u(t,tx)<\lambda.
\ee
If we show that, for $t$ sufficiently large, any point $x\in\mc{C}$ can be written in the above form, the lemma is proved, due to the arbitrariness of $\lambda$. Consider $x\in\mc{C}$. We write it as~follows:
\be\label{sxyx}
x=s_xe+y_x,\quad\text{with }\,s_x>w\hbox{ and }|y_x|<c^*\Big(\frac{s_x}{w}-1\Big).
\ee
Then, recalling that $c<kw$, we can find $t_1>0$ (depending on $c,k,w,\tau_1,R$, but not on~$x$), such that 
$$\forall\,t>t_1,\quad s_x>w>\frac{c}k+\frac1t\Big({\tau_1}+\frac{R}{k}\Big).$$
Next, using $c^*/w<k$ and $s_x>w$ in~\eqref{sxyx}, we infer that
$$|y_x|-ks_x<\Big(\frac{c^*}{w}-k\Big)s_x-c^*<\Big(\frac{c^*}{w}-k\Big)w-c^*=-kw.$$
Then, because $c<kw$, we can find $t_2\in[t_1,+\infty)$ (depending on $t_1,c,k,w,R$, but not on~$x$), such that $-kw<-c-R/t$ for all $t>t_2$, hence $|y_x|<ks_x-c-R/t$ for $t>t_2$. We have shown that $x$, $s\!=\!s_x$ fulfill the inclusion and inequality in~\eqref{tsx}, hence the proof is~concluded.
\end{proof}

	
\section{Proofs of Theorems~\ref{th1}-\ref{th4}}\label{sec5}

This section is devoted to the proofs of Theorems~\ref{th1}-\ref{th4}. We also state and show  Proposition~\ref{pro:hypU} below on some sufficient conditions for the validity of the geometric  hypothesis~\eqref{hyp:U}. Lastly, Remark~\ref{remtheta} below discusses the case of more general initial conditions than~\eqref{defu0}.

We present the proofs of our main theorems in the following order: \ref{th4},\ref{th2},\ref{th1},\ref{th3}.

\begin{proof}[Proof of Theorem~$\ref{th4}$]
Fix $c\in(0,c^*)$, where $c^*>0$ is given by Proposition~\ref{pro1}. It follows from the assumption~\eqref{dUrho} that, for given~$c'\in(c,c^*)$, the inclusion $U+B_{ct}\subset U_\rho+B_{c't}$ holds for $t>0$ sufficiently large (depending on~$c$ and~$c'$). Thus, Lemma~\ref{lem:U+B} implies that $\inf_{x\in U+B_{ct}}u(t,x)\to1$ as $t\to+\infty$. Therefore, for any $\lambda\in(0,1)$, there holds that
$$U+B_{ct}\subset E_\lambda(t),$$
for $t$ sufficiently large. Since this holds for each $c\in(0,c^*)$, one infers that
\be\label{hausdorff1}
\sup_{x\in U+B_{c^*t}}\dist(x,E_\lambda(t))=o(t)\ \hbox{ as }t\to+\infty.
\ee

Conversely, by taking any $c'>c^*$ and $\lambda\in(0,1)$, we will show that
\be\label{Elambda}
\big\{x\in\R^N:\dist(x,U)\geq c't\big\}\subset \R^N\setminus E_\lambda(t),
\ee
for $t$ sufficiently large. To do so, consider any $c\in(c^*,c')$, and let $R>0$ and $(v^T)_{T>0}$ be given by Proposition~\ref{proradial}. Denote $t_0=R/(c'-c)>0$, and consider any $t\ge t_0$. For any $x_0\in\R^N$ such that $\dist(x_0,U)\ge c't$, one has $B_{c't}(x_0)\subset\R^N\!\setminus\!U$, hence $u_0\le\1_{\R^N\setminus B_{c't}(x_0)}$ and $u_0\le v^t(0,\cdot-x_0)$ in $\R^N$ by~\eqref{vT10} (observe that $c't\ge R+ct$). Since $v^t$ is a generalized supersolution, the maximum principle yields $u(t,\cdot)\le v^t(t,\cdot-x_0)$ in~$\R^N$, hence $u(t,x_0)\le v^t(t,0)<\lambda$ by~\eqref{vT10}, that is, $x_0\in\R^N\setminus E_\lambda(t)$. Therefore, one has shown~\eqref{Elambda}, or equivalently $E_\lambda(t)\subset U+B_{c't}$, for all $t\ge t_0$. Since this holds for each $c'>c^*$, one~deduces
$$\sup_{x\in E_\lambda(t)}\dist(x,U+B_{c^*t})=o(t)\ \hbox{ as }t\to+\infty.$$
Together with~\eqref{hausdorff1}, this gives~\eqref{dH}. 
\end{proof}

\begin{proof}[Proof of Theorem~$\ref{th2}$] 
To start with, we show that the envelop set $\mc{W}$ of the function $w$ defined by~\eqref{FGgeneral}-\eqref{conv} has the geometric expression~\eqref{asspre-formula}. On one hand, if $\mc{U}(U)\neq\emptyset$ then~$w$ is given by the formula~\eqref{FGgeometric} (with the convention $c^*/0=+\infty$). Hence, in such a case, for any $e\in\Sph$ and $r\ge0$, it holds that $\dist(re,\R^+\mc{U}(U))=r\,\dist(e,\R^+\mc{U}(U))=rc^*/w(e)$, which yields the equivalence between~\eqref{asspre} and~\eqref{asspre-formula}. On the other hand, if $\mc{U}(U)=\emptyset$ then $\mc{W}\equiv B_{c^*}$ owing to~\eqref{conv}, and therefore~\eqref{asspre-formula} holds true in this case under our convention $\R^+\emptyset\,+\,B_{c^*}=B_{c^*}$.

We now turn to~\eqref{ass-cpt}. We preliminarily observe that, by~\eqref{hyp:U}, one has
$$\mc{U}(U)\supset \mc{U}(U_\rho)=\Sph\setminus \mc{B}(U)\supset \mc{U}(U),$$
that is, $\mc{U}(U_\rho)=\mc{U}(U)$. Fix a compact set $C$ contained in $\mc{W}$. For any $\xi\in\mc{U}(U)=\mc{U}(U_\rho)$ (if it exists), and any~$\tau>0$ and $0<c'<c<c^*$, the definition of $\mc{U}(U_\rho)$ yields
$$\frac1t\dist(t \tau \xi,U_\rho)\to0\as t\to+\infty,$$
hence $B_{c't}(t \tau \xi)\subset U_\rho+B_{ct}$ for $t$ sufficiently large. It then follows from Lemma~\ref{lem:U+B} that
\Fi{Bctauxi}
\inf_{x\in B_{c'}(\tau \xi)}u(t,tx)\to1\as t\to+\infty.
\Ff
Note that the above limit holds good when $\tau=0$ (without any reference to $\xi$) due to Proposition~\ref{pro1} and the fact that $u_0$ fulfills~\eqref{hyptheta} for some $x_0\in\R^N$, because $U_\rho\neq\emptyset$. Moreover, the expression~\eqref{asspre-formula} implies that any point $x\in \mc{W}$ is contained either in $B_{c'_x}$ or in $B_{c'_x}(\tau_x\xi_x)$, for certain $c'_x\in(0,c^*)$,  $\xi_x\in\mc{U}(U)$, and $\tau_x>0$. Then, by compactness, $C$ can be covered by a finite number of such balls and therefore, since~\eqref{Bctauxi} holds in each one of them, the first limit in~\eqref{ass-cpt} follows.

Consider now a compact set $C$ included in $\R^N\setminus\ol{\mc{W}}$. Any point $y\in C$ is such that~$e:=y/|y|$ satisfies $w(e)<|y|<+\infty$, hence necessarily $e\in\mc{B}(U)$, because otherwise~\eqref{hyp:U} would yield $e\in\mc{U}(U_\rho)=\mc{U}(U)$ and then $w(e)=+\infty$ by the convention~\eqref{conv}. As a consequence, for an arbitrary $\lambda>0$, applying Lemma~\ref{lem:BU} with $w\in(w(e),|y|)$, we infer the existence of an open neighborhood $\mc{C}_y$ of $y$ and of some~$t_y>0$ such~that
$$\forall\,t>t_y,\ \forall x\in\mc{C}_y,\quad u(t,tx)<\lambda.$$
By a covering argument we can find $t_C>0$ such that
$$\forall\,t>t_C,\ \forall x\in C,\quad u(t,tx)<\lambda.$$
This gives the second limit in~\eqref{ass-cpt}, concluding the proof of Theorem~\ref{th2}.
\end{proof}

\begin{proof}[Proof of Theorem~$\ref{th1}$] The first limit in~\eqref{ass} is a particular instance of the first limit in the conclusion~\eqref{ass-cpt} of Theorem~\ref{th2}. The second one only involves the directions $e$ for which $w(e)<+\infty$, whence, by~\eqref{hyp:U}, the ones in $\mc{B}(U)$. 
Thus such a limit immediately follows by applying Lemma~\ref{lem:BU} with $w\in(w(e),c)$.
\end{proof}

\begin{proof}[Proof of Theorem~$\ref{th3}$] 
We derive the limit~\eqref{Hloc} in the more general case where $\ol{B_R}$ is replaced by any compact set $K\subset\R^N$ satisfying $\overline{K\cap\mc{W}}=K\cap\overline{\mc{W}}$ (which holds true when $K=\ol{B_R}$ by the definition of $\mc{W}$).

Take $\lambda\in(0,1)$ and $\e>0$. For $\eta>0$, we define the following subset of $K\cap\mc{W}$:
$$ K_\eta:=K\cap\big\{re\,:\,e\in \Sph,\ \ 0\leq r\leq w(e)-\eta\big\}.$$
From the continuity of $w$, it follows that $K_\eta$ is a compact set included in the open set~$\mc{W}$.  On the one hand, since $\ol{K\cap\mc{W}}$ is compact, using a covering argument one can find $\eta>0$ small enough such that
$$K\cap\mc{W}\subset \ol{K\cap\mc{W}} \subset K_\eta+B_\e.$$ 
On the other hand, by the first line of the conclusion~\eqref{ass-cpt} of Theorem~\ref{th2} applied with $C={K}_{\eta}$, we infer that, for~$t$ larger than some $T>0$ depending on $\eta$, there holds that $K_{\eta}\subset t^{-1}E_\lambda(t)$ and therefore~$K_{\eta}\subset K\cap t^{-1}E_\lambda(t)$. Combining these inclusions one then~gets 
\Fi{K<}
\forall\,t>T,\quad K\cap\mc{W}\subset \Big(K\cap\frac1tE_\lambda(t)\Big) +B_\e.
\Ff

Consider now, for $\sigma>0$, the set
$$K'_\sigma:=K\cap\big\{re\,:\,e\in \Sph,\ \ r\geq w(e)+\sigma\big\}.$$
By the continuity of $w$, this is a compact set contained in $\R^N\setminus\ol{\mc{W}}$. Let us check that
\Fi{K-K'}
K\setminus K'_\sigma\subset \big(K\cap\overline{\mc{W}}\big)+B_\e,
\Ff
for all $\sigma>0$ small enough. Assume by contradiction that this is not the case. Then we can find a sequence $(r_n e_n)_{n\in\N}$ in $K\setminus\big((K\cap\overline{\mc{W}})+B_\e\big)$ with $(e_n)_{n\in\N}$ in $\Sph$ and~$(r_n)_{n\in\N}$ bounded and satisfying $r_n<w(e_n)+1/n$ for all $n\in\N$. Thus, up to subsequences,~$(e_n)_{n\in\N}$ converges to some $e\in\Sph$ and then, by the continuity of $w$, $(r_n)_{n\in\N}$ converges to some~$r\leq w(e)$ (whenever $w(e)$ be finite or not). This means that $r e\in K\cap\overline{\mc{W}}$ and therefore $r_n e_n\in(K\cap\overline{\mc{W}})+B_\e$ for $n$ large, a contradiction. We can then choose $\sigma>0$ such that~\eqref{K-K'} holds. Applying the second line of the conclusion~\eqref{ass-cpt} of Theorem~\ref{th2} with $C={K}_{\sigma}'$, we can find $\tau>0$ such that
$$\forall\,t>\tau,\quad K'_{\sigma}\cap \frac1t E_\lambda(t)=\emptyset,$$
whence
$$\forall\,t>\tau,\quad K\cap\frac1t E_\lambda(t)\subset K\setminus K'_{\sigma}.$$
But then, since the inclusion~\eqref{K-K'} and the fact that $K\cap\overline{\mc{W}}=\ol{K\cap\mc{W}}$ yield $K\setminus K'_{\sigma}\subset \ol{K\cap\mc{W}}+B_\e$, one eventually derives
$$\forall\,t>\tau,\quad K\cap\frac1t E_\lambda(t)\subset \ol{K\cap\mc{W}}+B_\e=(K\cap\mc{W})+ B_\e.$$
This property, together with~\eqref{K<}, yields the desired result~\eqref{Hloc}, owing to the arbitrariness of~$\e>0$.
\end{proof}

The last result of this section provides a list of conditions for a set $U\subset\R^N$ to fulfill property~\eqref{hyp:U}. As we will see in the examples listed in Section~\ref{sec6}, conditions~\eqref{hyp:U} and~\eqref{dUrho} cannot be compared. However, condition~\eqref{dUrho} together with certain additional properties imply~\eqref{hyp:U}, as the following result shows.

\begin{proposition}\label{pro:hypU}
For a set $U\subset\R^N$, property~\eqref{hyp:U} holds if $U$ satisfies~\eqref{dUrho} together with one of the following conditions:
\begin{itemize}	
\item either $U$ is star-shaped with respect to some point $x_0\in\R^N$;
\item or there exists $U'\subset\R^N$ satisfying
\Fi{hyp:U'}
\mc{B}(U')\cup\,\mc{U}(U')=\Sph
\Ff
and $d_{\mc{H}}(U,U')<+\infty$;
\item or there exists $U'\subset\R^N$ satisfying~\eqref{hyp:U'} and
\Fi{dRUU'}
\frac{d_{\mc{H}}(U\cap B_R,U'\cap B_R)}{R}\longrightarrow 0\as R\to+\infty.
\Ff
\end{itemize}
\end{proposition}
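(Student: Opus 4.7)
The plan is to first use~\eqref{dUrho} to reduce the target identity~\eqref{hyp:U} to the simpler condition $\mc{B}(U)\cup\mc{U}(U)=\Sph$, and then to verify this identity under each of the three additional hypotheses by comparing the normalized quantities $\dist(\tau\xi,\cdot)/\tau$ as $\tau\to+\infty$.

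For the reduction step, I would observe that $U_\rho\subset U$ gives $\dist(\cdot,U)\leq\dist(\cdot,U_\rho)$, while~\eqref{dUrho} provides some $M>0$ such that every point of $U$ lies within distance $M$ of $U_\rho$; a short triangle-inequality argument then yields $\dist(y,U_\rho)\leq\dist(y,U)+M$ for every $y\in\R^N$. Dividing by $\tau$ and letting $\tau\to+\infty$ gives $\mc{U}(U_\rho)=\mc{U}(U)$, since the inclusion $\mc{U}(U_\rho)\subset\mc{U}(U)$ is automatic. Hence hypothesis~\eqref{hyp:U} becomes equivalent to $\mc{B}(U)\cup\mc{U}(U)=\Sph$.

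The star-shaped case is handled directly by contradiction: assuming $\xi\in\Sph$ lies in neither $\mc{B}(U)$ nor $\mc{U}(U)$, one picks a sequence $\tau_n\to+\infty$ with $\dist(\tau_n\xi,U)/\tau_n\to 0$, chooses $y_n\in U$ with $y_n/\tau_n\to\xi$, and uses the segment $[x_0,y_n]\subset U$ to produce, for any fixed $\tau>0$, the points $x_0+(\tau/\tau_n)(y_n-x_0)\in U$, which converge to $\tau\xi$ as $n\to+\infty$. Hence $\tau\xi\in\overline{U}$ for every $\tau>0$, forcing $\xi\in\mc{U}(U)$, a contradiction. The second alternative is also short: $d_{\mc{H}}(U,U')<+\infty$ yields $|\dist(\cdot,U)-\dist(\cdot,U')|\leq d_{\mc{H}}(U,U')$ pointwise, hence $\mc{B}(U)=\mc{B}(U')$ and $\mc{U}(U)=\mc{U}(U')$, and~\eqref{hyp:U'} concludes.

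The third alternative is the main obstacle, since the Hausdorff control~\eqref{dRUU'} is only available locally, with a radius growing with the scale. My plan is to choose $R=2\tau$ and argue as follows: when $\xi\in\mc{U}(U)$, for $\tau$ large one has $\dist(\tau\xi,U)<\tau$, so the nearest points of $U$ to $\tau\xi$ lie in $B_{2\tau}$, whence $\dist(\tau\xi,U')\leq\dist(\tau\xi,U)+d_{\mc{H}}(U\cap B_{2\tau},U'\cap B_{2\tau})$, and division by $\tau$ gives $\xi\in\mc{U}(U')$. For the identity $\mc{B}(U)=\mc{B}(U')$, I would proceed contrapositively: from $\dist(\tau_n\xi,U')/\tau_n\to 0$ along some sequence $\tau_n\to+\infty$, pick nearly minimizing points $y_n\in U'$, observe that $|y_n|\leq 2\tau_n$ for $n$ large, and invoke~\eqref{dRUU'} with $R=2\tau_n$ to extract $z_n\in U$ with $|y_n-z_n|=o(\tau_n)$, so that $\dist(\tau_n\xi,U)/\tau_n\to 0$. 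By symmetry this yields $\mc{B}(U)=\mc{B}(U')$ and $\mc{U}(U)=\mc{U}(U')$, and~\eqref{hyp:U'} concludes again. The key technical point throughout this last case is to justify, \emph{before} invoking~\eqref{dRUU'}, that the relevant nearly minimizing points of $U$ (respectively $U'$) actually fall inside the ball of radius $2\tau$.
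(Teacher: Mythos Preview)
Your proposal is correct and follows essentially the same route as the paper: reduce~\eqref{hyp:U} to $\mc{B}(U)\cup\mc{U}(U)=\Sph$ via~\eqref{dUrho}, then in each case compare $\dist(\tau\xi,U)$ with $\dist(\tau\xi,U')$ (or use the star-shaped structure), the key point in the third case being that nearly-minimizing points lie in a ball of radius $O(\tau)$ so that~\eqref{dRUU'} applies. One small slip: in the star-shaped case your points $x_0+(\tau/\tau_n)(y_n-x_0)$ converge to $x_0+\tau\xi$, not $\tau\xi$; this still gives $\dist(\tau\xi,U)\le|x_0|$ for all $\tau>0$ and hence $\xi\in\mc{U}(U)$, so the argument goes through (the paper simply normalizes $x_0=0$).
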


\begin{proof}
First of all, using~\eqref{dUrho} one sees that, for any $\xi\in\mc{U}(U)$,
$$\frac{\dist(\tau\xi,U_\rho)}{\tau}\leq\frac{\dist(\tau\xi,U)+d_{\mc{H}}(U,U_\rho)}{\tau}\to0\as \tau\to+\infty,$$
that is, $\xi\in \mc{U}(U_\rho)$. Thus, it is sufficient to show~\eqref{hyp:U}  with $\mc{U}(U)$ instead of~$\mc{U}(U_\rho)$. 

Consider the case where $U$ is star-shaped. Since properties~\eqref{dUrho} and~\eqref{hyp:U} are invariant under rigid transformations of the coordinate system, we can assume without loss of generality that $U$ is star-shaped with respect to the origin. Suppose that there exists~$\xi\in\Sph\setminus \mc{B}(U)$ (otherwise property~\eqref{hyp:U} trivially holds). This means that there exists a sequence $\seq{\tau}$ diverging to $+\infty$ and a sequence $\seq{x}$ in $U$ such that~$|\tau_n\xi-x_n|/\tau_n\to0$ as $n\to+\infty$. Then, for any $0<\tau\leq\tau_n$, since $ \frac{\tau}{\tau_n}x_n\in U$ because $U$ is star-shaped with respect to the origin, one finds
$$\frac{\dist(\tau\xi,U)}{\tau}\leq\frac{|\tau\xi-\frac{\tau}{\tau_n}x_n|}\tau=\Big|\xi-\frac{1}{\tau_n}x_n\Big|\to0\as n\to+\infty,$$
whence $\xi\in \mc{U}(U)$. This shows that $\mc{B}(U)\cup\mc{U}(U)=\Sph$ and, as already emphasized, this proves the statement in this case.

Consider now the hypotheses of the second case, with $U'\subset\R^N$ satisfying~\eqref{hyp:U'} and~$d_{\mc{H}}(U,U')<+\infty$. Then there holds that 
\Fi{UU'}
\mc{B}(U')=\mc{B}(U)\quad\text{ and }\quad\mc{U}(U')=\mc{U}(U),
\Ff
hence $\mc{B}(U)\cup\mc{U}(U)=\Sph$ and, as above,~\eqref{hyp:U} then follows.

Let us check that the same conclusions~\eqref{UU'} hold when $U'$ satisfies~\eqref{hyp:U'}-\eqref{dRUU'}. We~call
$$D_R:=d_{\mc{H}}(U\cap B_R,U'\cap B_R).$$
For $\xi\in\Sph$ and $\tau>0$, there exists $x_\tau\in U$ such 
$$|\tau\xi-x_\tau|<\dist(\tau\xi,U)+1,$$
then in particular 
\Fi{xtau<}
|x_\tau|<\tau+\dist(\tau\xi,U)+1\le\tau+|\tau\xi-x_1|+1\le2\tau+|x_1|+1.
\Ff
Moreover, we can find $x_\tau'\in U'\cap B_{|x_\tau|+1}$ for which $|x_\tau'-x_\tau|< D_{|x_\tau|+1}+1$. It follows that
$$\dist(\tau\xi,U')\leq|\tau\xi-x_\tau'|\leq\dist(\tau\xi,U)+1+D_{|x_\tau|+1}+1.$$
By~\eqref{dRUU'} and~\eqref{xtau<} one then deduces the inequality
$$\dist(\tau\xi,U')\leq\dist(\tau\xi,U)+o(\tau)\as\tau\to+\infty,$$
and then $|\dist(\tau\xi,U')-\dist(\tau\xi,U)|=o(\tau)$ as $\tau\to+\infty$ by switching the roles of $U$ and~$U'$. From this, the equivalences~\eqref{UU'} immediately follow, and one concludes as in the previous paragraph.
\end{proof}

\begin{remark}\label{remtheta}{\rm The conclusions of Theorems~$\ref{th1}$-$\ref{th4}$ still hold for the solutions to~\eqref{homo} with initial conditions more general than characteristic functions in~\eqref{defu0}. To be more precise, firstly, if Hypothesis~$\ref{hyp}$ is satisfied, if the minimal speed $c^*$ and the parameters $\theta\in(0,1)$ and $\rho>0$ are given by Proposition~$\ref{pro1}$, and if $u$ is a solution to~\eqref{homo} such that
$$\{u_0\ge\theta\}_\rho\neq\emptyset,\ \ \mc{B}\big(\{u_0\ge\theta\}\big)\cup\,\mc{U}\big(\{u_0\ge\theta\}_\rho\big)=\Sph$$
and
$$d_{\mc{H}}\big(\supp u_0,\{u_0\ge\theta\}\big)<+\infty,$$
then the conclusions of Theorem~$\ref{th1}$-$\ref{th3}$ hold, with $\mc{U}(U)$ replaced by~$\mc{U}\big(\{u_0\ge\theta\}\big)$ in the definitions~\eqref{FGgeneral}-\eqref{conv} of $w(e)$. Indeed, it is easy to see that $\mc{B}(\supp u_0)=\mc{B}\big(\{u_0\ge\theta\}\big)$, that $\mc{U}(\supp u_0)=\mc{U}\big(\{u_0\ge\theta\}\big)$, that Lemma~$\ref{lem:U+B}$ holds with~$U$ replaced by $\{u_0\ge\theta\}$, and that Lemmas~$\ref{lem:eB}$ and~$\ref{lem:BU}$ hold as well with $\mc{B}(U)$ replaced by~$\mc{B}(\supp u_0)$ in both statements and~$U$ replaced by $\supp u_0$ in~\eqref{conceB}. Meanwhile, Proposition~$\ref{proradial}$ is kept unchanged.

Secondly, if Hypothesis~$\ref{hyp}$ is satisfied, if the minimal speed $c^*$ and the parameters $\theta\in(0,1)$ and $\rho>0$ are given by Proposition~$\ref{pro1}$, and if~$u$ is a solution to~\eqref{homo} such that
$$\{u_0\ge\theta\}_\rho\neq\emptyset,\ \ d_{\mc{H}}\big(\supp u_0,\{u_0\ge\theta\}_\rho\big)<+\infty$$
and
$$d_{\mc{H}}\big(\supp u_0,\{u_0\ge\theta\}\big)<+\infty,$$
then the conclusion of Theorem~$\ref{th4}$ is changed into the following property: for any $\lambda\in(0,1)$,
$$d_{\mc{H}}\big( E_\lambda(t)\,,\,\supp u_0+B_{c^*t}\big)=o(t)\ \hbox{ and }\ d_{\mc{H}}\big( E_\lambda(t)\,,\,\{u_0\ge\theta\}+B_{c^*t}\big)=o(t)$$
as $t\to+\infty$. Indeed, $\mc{B}(\supp u_0)=\mc{B}\big(\{u_0\ge\theta\}\big)$, $\mc{U}(\supp u_0)=\mc{U}\big(\{u_0\ge\theta\}\big)$, and Lemma~$\ref{lem:U+B}$ holds with~$U$ replaced by $\{u_0\ge\theta\}$, while Proposition~$\ref{proradial}$ is kept unchanged.}
\end{remark}


\section{Counter-examples}\label{sec6}

In this section, we show some counter-examples to Theorems~\ref{th1}-\ref{th4} and to the formula~\eqref{Hausdorff} when the assumptions~\eqref{hyp:U} or~\eqref{dUrho} are not satisfied. In all the counter-examples, we consider the function $f(s)=s(1-s)$ for $s\in[0,1]$. Hence, the invasion property stated in Definition~\ref{hyp:invasion} holds for arbitrary $\theta\in(0,1)$ and $\rho>0$, as well as Hypothesis~\ref{hyp}, and moreover the minimal speed of planar traveling fronts connec\-ting~$1$ to~$0$ is equal to $c^*=2$, see~\cite{AW,F,KPP}.

\begin{proposition}\label{pro:ce1}
Let $u$ be the solution to~\eqref{homo} with $f(s)=s(1-s)$ and initial datum $u_0=\1_U$, where
$$U=\bigcup_{n\in\N}\overline{B_{2^n+1}}\setminus B_{2^n-1}.$$
The set $U$ has a non-empty interior, it does not fulfill~\eqref{hyp:U} for any $\rho>0$, but it fulfills~\eqref{dUrho} for any~$\rho\le1$ $($hence,~\eqref{dH} holds$)$. Moreover, \eqref{ass0},~\eqref{ass},~\eqref{ass-cpt} and~\eqref{Hloc} all fail, for any function $w:\Sph\to[0,+\infty]$ and any open set~$\mc{W}\subset\R^N$ which is star-shaped with respect to the origin, and both limits in~\eqref{Hausdorff} do not exist.
\end{proposition}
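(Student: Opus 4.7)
The plan is to organise the proof in three blocks, exploiting the rotational symmetry of $U=\bigcup_{n\in\N}\overline{B_{2^n+1}}\setminus B_{2^n-1}$ and invoking Theorem~\ref{th4} as a black box. First, the geometric claims. The interior of $U$ is clearly non-empty. For $\rho\in(0,1]$, $U_\rho$ is a union of narrower coaxial annuli (reducing to concentric spheres when $\rho=1$), and $d_{\mc{H}}(U,U_\rho)\le\rho$, which yields~\eqref{dUrho}. By radial symmetry, $\tau\mapsto\dist(\tau\xi,U)$ depends only on~$\tau$: it vanishes at $\tau=2^n$ and equals $2^{n-1}-1$ at the midpoint $\tau=3\cdot 2^{n-1}$ of the $n$-th gap. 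Hence $\liminf_{\tau\to+\infty}\dist(\tau\xi,U)/\tau=0$ but $\limsup_{\tau\to+\infty}\dist(\tau\xi,U)/\tau\ge 1/3$ for every $\xi\in\Sph$, and the same holds for $U_\rho$; thus $\mc{B}(U)=\mc{U}(U_\rho)=\emptyset$ and~\eqref{hyp:U} fails. Since Hypothesis~\ref{hyp} is satisfied by the logistic $f(s)=s(1-s)$, Theorem~\ref{th4} applied with any $\rho\in(0,1)$ delivers~\eqref{dH}.

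Second, the failure of any spreading-speed function. Since $u_0$ and the equation are rotationally invariant, $u(t,\cdot)$ is radial, so any $w:\Sph\to[0,+\infty]$ satisfying~\eqref{ass0} must be constant, say $w\in[0,+\infty]$. To exclude a finite value, for arbitrary $c>0$ set $t_n:=2^n/c$; then $ct_ne=2^ne\in U_\rho$ for every $\rho\in(0,1)$, so Lemma~\ref{lem:U+B} gives $u(t_n,ct_ne)\to 1$, which contradicts~\eqref{ass0} as soon as $c>w$. To exclude $w=+\infty$, fix $c>3c^*$ and set $s_n:=3\cdot 2^{n-1}/c$; then $cs_ne$ sits at the centre of the gap between the $n$-th and $(n+1)$-th annuli, at distance $(c/3)s_n-1$ from $U$. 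Because $c/3>c^*$, this distance exceeds $c^*s_n+o(s_n)$ as $n\to+\infty$, and~\eqref{dH} forces $cs_ne\notin E_\lambda(s_n)$ for large $n$ and any $\lambda\in(0,1)$, i.e.\ $u(s_n,cs_ne)\to 0$. The failures of~\eqref{ass} and~\eqref{ass-cpt} are immediate consequences. This second argument is the only quantitative step of the proof, and its linchpin is checking that the linearly-growing midpoint-gap distance $(c/3-c^*)s_n$ genuinely outgrows the sublinear error $o(s_n)$ from Theorem~\ref{th4}; this is what pins down the threshold $c>3c^*$ and is where I expect the main obstacle to lie.

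Third, the failure of any spreading-set description. By~\eqref{dH}, the local Hausdorff behaviour of $t^{-1}E_\lambda(t)$ coincides with that of $t^{-1}U+B_{c^*}$. Along $t_k:=2^k$, the rescaled set $t_k^{-1}U$ consists of coaxial annuli at radii $2^{n-k}$ with vanishing width and converges locally in Hausdorff distance to $V_0:=\{0\}\cup\bigcup_{m\in\Z}\{|x|=2^m\}$, so $t_k^{-1}U+B_{c^*}\to V_0+B_{c^*}$. This limit is not star-shaped with respect to the origin (in radial terms, $16\in V_0+B_{c^*}$ but $12\notin V_0+B_{c^*}$, as $12$ lies in the gap $(10,14)$), so no open star-shaped $\mc{W}$ can locally agree with it, ruling out~\eqref{Hloc} whenever $R>16$. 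Along $t'_k:=3\cdot 2^{k-1}$, the same rescaling produces the different limit $V'_0+B_{c^*}$ with $V'_0:=\{0\}\cup\bigcup_{m\in\Z}\{|x|=(2/3)\cdot 2^m\}$: for instance the radius $r=42$ belongs to $V'_0+B_{c^*}$ (since $(2/3)\cdot 64=128/3\in(40,44)$) but not to $V_0+B_{c^*}$ (the nearest dyadic radii $32$ and $64$ are too far from $42$). Hence both limits in~\eqref{Hausdorff} fail to exist.
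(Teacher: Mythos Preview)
Your proof is correct and complete, but it takes a genuinely different route from the paper's. The main structural difference is that you systematically exploit Theorem~\ref{th4} as a black box (legitimate here since~\eqref{dUrho} holds), whereas the paper invokes Proposition~\ref{proradial} directly to build the ``convergence to $0$'' estimates. Concretely: to show $u(s_n,cs_ne)\to0$ for $c>3c^*$, you read this off from~\eqref{dH} and the midpoint-gap distance $(c/3)s_n-1>c^*s_n+o(s_n)$; the paper instead applies the retracting supersolutions $(v^T)_{T>0}$ at the midpoints and gets a threshold $12c^*$ rather than your sharper $3c^*$. For the failure of~\eqref{Hloc} and the non-existence of the limits in~\eqref{Hausdorff}, you compute the local Hausdorff limits of $t^{-1}U+B_{c^*}$ along the two subsequences $t_k=2^k$ and $t_k'=3\cdot2^{k-1}$, obtain two distinct non-star-shaped limiting patterns $V_0+B_{c^*}$ and $V_0'+B_{c^*}$, and transfer everything to $t^{-1}E_\lambda(t)$ via~\eqref{dH}. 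The paper's argument is more elementary and pointwise: it shows that any candidate~$\mc{W}$ would have to lie in $\overline{B_{12c^*}}$ (from the $u\to0$ sequence) and simultaneously contain points $\sigma e$ with $\sigma>12c^*$ (from the $u\to1$ sequence). Your approach gives a clearer picture of what $t^{-1}E_\lambda(t)$ actually looks like along subsequences, at the cost of the ``local Hausdorff limit'' bookkeeping (note that truncation by $\overline{B_R}$ does not literally commute with Hausdorff convergence, so your $16e$/$12e$ contradiction should really be phrased pointwise, which works fine). Your ``immediate consequences'' line for~\eqref{ass-cpt} is a bit compressed: the clean way to say it is that the oscillation you exhibit for every $c>3c^*$ and every $e\in\Sph$ would force all of $\R^N\setminus\overline{B_{3c^*}}$ into~$\partial\mc{W}$, which is absurd for an open set.
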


\begin{proof}
On the one hand, the intersection of $U$ with any ray $\R^+e$, $e\in\Sph$, is unbounded, whence $\mc{B}(U)=\emptyset$. On the other hand, for any $e\in\Sph$, the formula
\Fi{32n}
\dist(3\times2^ne,U)=2^n-1
\Ff
shows that $\mc{U}(U)=\emptyset$ too. Therefore~\eqref{hyp:U} is not satisfied. Let us check that formula~\eqref{ass0} (whence the stronger one~\eqref{ass}) does not hold in any given direction $e\!\in\!\Sph$, with any $w(e)\!\in\![0,+\infty]$. Indeed, on the one hand, by Lemma~\ref{lem:U+B}
\Fi{u2n}
\lim_{t\to+\infty}u(t,2^ne)=1\quad\text{uniformly with respect to~$n$}.
\Ff
Thus, if~\eqref{ass0} were satisfied for some $e\in\Sph$, one would necessarily have $w(e)=+\infty$. On the other hand, given~$\lambda\in(0,1)$ and $c=2c^*$, consider the family of functions~$(v^T)_{T>0}$ and the associated $R>0$ provided by Proposition~\ref{proradial}. For any $n\in\N$ satisfying $n>\log_2(R+1)$, we call $T_n:=(2^n-1-R)/(2c^*)>0$ and deduce from the first property in~\eqref{vT10} that 
$$\forall\,|x|\geq 2^n-1,\quad v^{T_n}(0,x)\geq 1,$$
and therefore, because of~\eqref{32n}, $v^{T_n}(0,\.)\geq u_0(\.+3\times2^ne)$ in $\R^N$, for every $e\in\Sph$. Thus, the comparison principle together with the second property in~\eqref{vT10} entail
$$\forall\,t\leq T_n,\quad u(t,3\times2^ne)\leq v^{T_n}(t,0)<\lambda<1,$$
for every $e\in\Sph$. Calling $\tau_n:=2^{n-2}/c^*$, we have that $\tau_n<T_n$ for $n$ large enough, hence we get
\Fi{uTn}
\limsup_{n\to+\infty}\,u(\tau_n,12\, c^*\tau_n e)\leq\lambda<1,
\Ff
for every $e\in\Sph$. Consequently, if~\eqref{ass0} were satisfied for some $e\in\Sph$, one would necessarily~have $w(e)\leq 12\, c^*$, a contradiction with~$w(e)=+\infty$. In conclusion, formula~\eqref{ass0} and then formula~\eqref{ass} do not hold in any direction $e\in\Sph$, for any $w(e)\in[0,+\infty]$. 
	
The set $\mc{W}$ given by~\eqref{asspre} with $w(e)$ as in~\eqref{FGgeneral}-\eqref{conv} is actually equal to $B_{c^*}$. We will see that~\eqref{ass-cpt} and~\eqref{Hloc} fail with $\mc{W}=B_{c^*}$, as well as with any open set $\mc{W}$ which is star-shaped with respect to the origin. So assume now by way of contradiction that there exists an open set $\mc{W}\subset\R^N$ which is star-shaped with respect tot the origin and for which either~\eqref{ass-cpt} or~\eqref{Hloc} hold. Because of~\eqref{uTn}, the first condition in~\eqref{ass-cpt} in one case, or~\eqref{Hloc} in the other case, imply that $\{12\, c^*e\}\notin\mc{W}$, for any $e\in\Sph$. Hence, being star-shaped, $\mc{W}$ satisfies $\mc{W}\subset B_{12\,c^*}$. But we also know that, by~\eqref{u2n}, $u(2^n/\sigma,2^ne)\to1$ as~$n\to+\infty$ for any $\sigma>0$. Taking $\sigma>12\,c^*$, the second line of~\eqref{ass-cpt} is violated by~$C=\{\sigma e\}$ and moreover, for given $\lambda\in(0,1)$, $d_{\mc{H}}(\{\sigma e\}\cap\frac1t\, E_\lambda(t) \,,\, \{\sigma e\}\cap \mc{W})=+\infty$ for~$t=2^n/\sigma$ and~$n$ large enough (depending on $\lambda$), that is,~\eqref{Hloc} fails too. We have reached a contradiction in both cases.
	
Finally, for $n\in\N$ and $e\in\Sph$, calling $t_n:=2^{n-1}/c^*$, we rewrite~\eqref{32n} as
$$\dist(6c^*e\,t_n,U)=2c^*\,t_n-1.$$
We deduce that
$$B_{c^*-1/t_n}(6c^*e)\subset\frac{1}{t_n}\,\Big\{x\in\R^N:\dist(x,U)>c^*\,t_n\Big\}$$
and therefore if $d_{\mc{H}}(t^{-1}U+B_{c^*},\mc{W}')\to0$ as $t\to+\infty$, for some set $\mc{W}'$, then necessarily $6c^*e\notin \ol{\mc{W}'}$. But we see that, for $s_n:=2^n/(6c^*)$, there holds that
$$6c^*e\in\frac{1}{s_n}U,$$
and thus $d_{\mc{H}}(t^{-1}U+B_{c^*},\mc{W}')\to0$ as $t\to+\infty$ would imply $6c^*e\in \ol{\mc{W}'}$. This shows that the second limit in~\eqref{Hausdorff} does not exist, hence the first limit does not exist either, thanks to~\eqref{dH} (notice also that~\eqref{dUrho} is satisfied for any $\rho\in(0,1]$, hence~\eqref{dH} holds thanks to Theorem~\ref{th4}).
\end{proof}

The second counter-example is the counterpart of Proposition~\ref{pro:ce1}, with a set $U$ fulfilling~\eqref{hyp:U} but not~\eqref{dUrho}.

\begin{proposition}\label{pro:ce1bis}
Let $u$ be the solution to~\eqref{homo} with $f(s)=s(1-s)$ and initial datum $u_0=\1_U$, where $U=U_1\cup U_2$ and
\be\label{defU12}\left\{\baa{l}
U_1:=\big\{x\in\R^N:x_1\ge0\hbox{ and }x_2^2+\cdots+x_N^2\le1\big\},\vspace{3pt}\\
U_2:=\big\{x\in\R^N:x_1\ge0\hbox{ and }(x_2-\sqrt{x_1})^2+x_3^2+\cdots+x_N^2\le e^{-x_1^2}\big\}.\eaa\right.
\ee
The set $U$ has a non-empty interior, it does not fulfill~\eqref{dUrho} for any $\rho>0$, but it fulfills~\eqref{hyp:U} for $0<\rho\le1$ $($hence,~\eqref{ass0},~\eqref{ass},~\eqref{ass-cpt},~\eqref{Hloc} hold$)$. Moreover,~\eqref{dH} fails and the first limit in~\eqref{Hausdorff} exists and is equal to~$\mc{W}$, whereas the second one does not exist.
\end{proposition}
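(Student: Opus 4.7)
The plan is to verify the geometric properties of $U$, apply Theorems~\ref{th1}--\ref{th3} to deduce the positive conclusions, and then analyze the large-time asymptotics of $u$ near the thin tube $U_2$ to settle the failure of~\eqref{dH} and the asymmetric status of the two limits in~\eqref{Hausdorff}.

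First I check the geometry. Since the spine curve $x_2=\sqrt{x_1}$ grows sublinearly, a direct computation gives $\mc{B}(U)=\Sph\setminus\{e_1\}$ and $\mc{U}(U)=\{e_1\}$ with $e_1:=(1,0,\ldots,0)$. For $\rho\in(0,1]$, the cross-section of $U_2$ has radius $e^{-x_1^2/2}<\rho$ once $x_1>\sqrt{2\log(1/\rho)}$, so $U_\rho$ coincides with $(U_1)_\rho$ outside a compact set; in particular $e_1\in\mc{U}(U_\rho)$ and~\eqref{hyp:U} holds. The spine points $p_n:=(n,\sqrt{n},0,\ldots,0)\in U_2$ satisfy $\dist(p_n,U_\rho)\ge\dist(p_n,U_1)=\sqrt{n}-1\to+\infty$, so $d_{\mc{H}}(U,U_\rho)=+\infty$ and~\eqref{dUrho} fails. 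Consequently \eqref{ass0}, \eqref{ass}, \eqref{ass-cpt} and \eqref{Hloc} all hold from Theorems~\ref{th1}--\ref{th3}, while \eqref{asspre-formula} yields the envelope $\mc{W}=\R^+e_1+B_{c^*}$.

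The main technical step is a sharp upper bound on $u$ in the \emph{bad region} $|y_\perp|\ge(c^*+\epsilon)t$. Using $f(u)=u(1-u)\le u$ for $u\ge 0$, one dominates $u$ by the solution of the linearized equation $v_t=\Delta v+v$ with $v_0=\1_U$, giving $u(t,y)\le e^t(G_t*\1_{U_1})(y)+e^t(G_t*\1_{U_2})(y)$, where $G_t$ denotes the $N$-dimensional heat kernel. The first term is bounded by the Gaussian tail from distance $|y_\perp|-1$, with net exponent $\le t-(c^*+\epsilon)^2t/4+o(t)<0$. For the second, slicing $U_2$ along $x_1$ gives
\begin{equation*}
(G_t*\1_{U_2})(y)\le\omega_{N-1}(4\pi t)^{-N/2}\int_0^\infty e^{-(N-1)x_1^2/2-d(x_1,y)^2/(4t)}\,dx_1,
\end{equation*}
with $d(x_1,y)^2\ge(y_1-x_1)^2+(|y_\perp|-\sqrt{x_1})^2$. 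A Laplace-type analysis after the substitution $s:=\sqrt{x_1}$ shows the exponent is bounded above by $-|y_\perp|^2/(4t)+O(1)$ (the $O(1)$ being uniform in $t,y$), which combined with the $e^t$ prefactor yields a bound of order $e^{-(\epsilon+\epsilon^2/4)t+O(1)}\to 0$ throughout the bad region (the remaining case $y_1\le-(c^*+\epsilon)t$ is handled directly by Proposition~\ref{proradial} since $\dist(y,U)\ge|y_1|$). With this bound in hand, the failure of~\eqref{dH} follows by taking $y_t:=(4t^2,3t,0,\ldots,0)$: since $(4t^2,2t,0,\ldots,0)$ is on the spine of $U_2$ and $\dist(y_t,(4t^2,2t,0,\ldots,0))=t<c^*t$, $y_t\in U+B_{c^*t}$; yet the bound applies uniformly on $B_{\epsilon t}(y_t)$ for any small $\epsilon>0$, giving $B_{\epsilon t}(y_t)\cap E_\lambda(t)=\emptyset$ and hence $d_{\mc{H}}(E_\lambda(t),U+B_{c^*t})\ge\epsilon t$. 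For the first limit in~\eqref{Hausdorff}, both inclusions of $d_{\mc{H}}(t^{-1}E_\lambda(t),\mc{W})\to 0$ are verified: $t^{-1}E_\lambda(t)\subset\mc{W}+B_\epsilon$ (equivalently $E_\lambda(t)\subset\R^+e_1+B_{(c^*+\epsilon)t}$) follows from the key bound above combined with Proposition~\ref{proradial}, while $\mc{W}\subset t^{-1}E_\lambda(t)+B_\epsilon$ follows from Lemma~\ref{lem:U+B} applied to $(U_1)_\rho$ (giving uniform spreading along the half-cylinder) together with Proposition~\ref{pro1} for the cap near the origin.

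Finally, for the non-existence of the second limit in~\eqref{Hausdorff}, I observe that $(9t,3,0,\ldots,0)\in t^{-1}U_2\subset t^{-1}U+B_{c^*}$ for every $t>0$ (being the image by $1/t$ of the spine point $(9t^2,3t,0,\ldots,0)$ of $U_2$). If $W'$ were the Hausdorff limit, the first direction of convergence would force, for each sufficiently large $t$, the existence of $w_t\in W'$ with $|w_t-(9t,3,0,\ldots,0)|=o(1)$. Fixing one such $w=w_{t_0}\in W'$ and letting $t'\to+\infty$, both $t'^{-1}U_1$ and $t'^{-1}U_2$ retract locally (in any bounded region containing $w$) to the ray $\R^+e_1$---since the spine of $t'^{-1}U_2$ at bounded heights $y_1\le R$ lies at $y_2=\sqrt{y_1/t'}\to 0$---so $\dist(w,t'^{-1}U+B_{c^*})\to\dist(w,\R^+e_1+B_{c^*})=|w_\perp|-c^*=3-c^*>0$, contradicting the Hausdorff convergence requirement $\sup_{w\in W'}\dist(w,t'^{-1}U+B_{c^*})\to 0$. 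The main obstacle of the proof is the Laplace/Gaussian estimate for $(G_t*\1_{U_2})(y)$: the linearized KPP growth factor $e^{f'(0)t}=e^t$ must be defeated for $y$ that can be geometrically close to the thin tube $U_2$, which demands the simultaneous use of Gaussian spread in both the $x_1$ and $y_\perp$ directions together with the exponentially decaying cross-section $e^{-(N-1)x_1^2/2}$.
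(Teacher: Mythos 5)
Your proof takes a genuinely different route from the paper's. The paper exploits the concavity of $f(s)=s(1-s)$: since $f$ is concave with $f(0)=f(1)=0$, the maximum principle gives the decoupling
$$\max(v_1,v_2)\le u\le\min(v_1+v_2,1),$$
where $v_i$ solves~\eqref{homo} with initial datum $\1_{U_i}$. This reduces the analysis to the solution $v_1$ (whose initial support $U_1$ satisfies all the hypotheses of Theorems~\ref{th1}--\ref{th4}) plus the solution $v_2$ (whose initial support $U_2$ has Gaussian-decaying cross-section, so $v_2(1,\cdot)$ has Gaussian decay and the standard theory for localized data gives $d_{\mc{H}}(t^{-1}E^2_\lambda(t),B_{c^*})\to 0$). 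The single conclusion $t^{-1}d_{\mc{H}}(E_\lambda(t),U_1+B_{c^*t})\to 0$ then settles both the existence of the first limit in~\eqref{Hausdorff} and the failure of~\eqref{dH} at once. You instead apply the linearization $f(u)\le u$ directly to $u$ and carry out a Gaussian/Laplace estimate on $e^t G_t*\1_{U_2}$ over the ``bad region'' $|y_\perp|\ge(c^*+\epsilon)t$. Your route is valid in spirit, and the geometry check, the choice of witness $y_t=(4t^2,3t,0,\dots,0)$ for the failure of~\eqref{dH}, and the contradiction argument for the non-existence of the second limit are all fine; the paper's route is cleaner because it never touches the delicate interaction between the $e^t$ growth, the $1/\sqrt{t}$ spread, and the curved thin tube.

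Two technical points need correcting in your version. First, the claimed Laplace bound ``the exponent is bounded above by $-|y_\perp|^2/(4t)+O(1)$, uniformly in $t,y$'' is not correct: taking $y_1=0$ and $|y_\perp|\gg t$, the maximizer $s^*\sim\big(|y_\perp|/(4(N-1)t)\big)^{1/3}$ of $\Phi(s)=-(N-1)s^4/2-(y_1-s^2)^2/(4t)-(|y_\perp|-s)^2/(4t)$ yields a gain of order $|y_\perp|^{4/3}/t^{4/3}$, not $O(1)$. The final conclusion $e^t(G_t*\1_{U_2})(y)\to 0$ uniformly in the bad region still holds (since $|y_\perp|^{4/3}/t^{4/3}=o(|y_\perp|^2/t)$ there), but the bound you write, $e^{-(\epsilon+\epsilon^2/4)t+O(1)}$, should be $e^{-(\epsilon+\epsilon^2/4)t+O(t^{2/3})}$ at best. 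For the application to $y_t$ (where $|y_\perp|=3t$) the $O(1)$ version is fine, but for the inclusion $t^{-1}E_\lambda(t)\subset\mc{W}+B_\epsilon$ the weaker form is what you actually have. Second, the ``remaining case'' is misstated: the complement of $\R^+e_1+B_{(c^*+\epsilon)t}$ is not exhausted by $\{|y_\perp|\ge(c^*+\epsilon)t\}\cup\{y_1\le-(c^*+\epsilon)t\}$; it also contains points with $y_1\in(-(c^*+\epsilon)t,0)$ and $|y_\perp|<(c^*+\epsilon)t$ but $|y|\ge(c^*+\epsilon)t$. One can show $\dist(y,U)\ge(c^*+\epsilon)t-1$ for all such $y$ (so Proposition~\ref{proradial} still applies), but the verification requires using both that $U\subset\{x_1\ge 0\}$ and that the distance to the spine of $U_2$ grows like $|y_\perp|^2$; it is not the one-line observation $\dist(y,U)\ge|y_1|$.
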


\begin{proof}
First of all, it is immediate to see that $\mc{U}(U)=\mc{U}(U_\rho)=\{\mathrm{e}_1\}$ for any $0<\rho\le1$, with $\mathrm{e}_1=(1,0,\cdots,0)$, while $\mc{B}(U)=\Sph\setminus\{\mathrm{e}_1\}$. This means that the assumption~\eqref{hyp:U} is fulfilled for any $0<\rho\le1$. One readily checks that, instead,~\eqref{dUrho} fails, for any $\rho>0$. The set~$\mc{W}$ given by the equivalent formulas~\eqref{asspre} and~\eqref{asspre-formula} is equal to the rounded half-cylinder $\mc{W}=\R^+\{\mathrm{e}_1\}+B_{c^*}$. It is not hard to see that the second limit in~\eqref{Hausdorff} does not exist, owing to the presence of $U_2$ in the definition of $U$.

It turns out that the presence of $U_2$ in the definition of $U$ does not affect the asymptotic of $E_\lambda(t)$ as $t\to+\infty$. To see this we observe that, since the function $f$ vanishes at $0$ and~$1$ and is concave, the maximum principle yields
\be\label{ineqv12}
0\le\max(v_1,v_2)\le u\le\min(v_1+v_2,1)\ \hbox{ in }[0,+\infty)\times\R^N,
\ee
where $v_i$ solves~\eqref{homo} with initial condition $v_i(0,\cdot)=\1_{U_i}$, for $i=1,2$. Let us call~$E^i_\lambda(t):=\{x\in\R^N:v_i(t,x)>\lambda\}$. Using the comparison with the linearized equation $\partial_tw=\Delta w+w$ and the explicit solution for the latter, one can check that~$v_2(1,x)$ has a Gaussian decay for $|x|\to+\infty$. It then follows from the standard theory that properties~\eqref{c<c*}-\eqref{c>c*} hold for $v_2$, hence $d_{\mc{H}}(t^{-1}E^2_\lambda(t),B_{c^*})\to0$ as $t\to+\infty$ for any~$\lambda\in(0,1)$. On the other hand, the set $U_1$ given in~\eqref{defU12} fulfills both~\eqref{hyp:U} and~\eqref{dUrho} with $0<\rho\le1$, hence the conclusions of Theorems~\ref{th1}-\ref{th4} hold for $v_1$. In particular, $t^{-1}d_{\mc{H}}(E^1_\lambda(t),U_1+B_{c^*t})\to0$ as $t\to+\infty$ for any $\lambda\in(0,1)$. Together with~\eqref{ineqv12}, one infers that, for any $\lambda\in(0,1)$,
\be\label{U1}
\frac1t\,d_{\mc{H}}(E_\lambda(t),U_1+B_{c^*t})\to0\ \hbox{ as }t\to+\infty.
\ee
As a consequence, $d_{\mc{H}}(t^{-1}E_\lambda(t),\mc{W})\to0$ as $t\to+\infty$ for any $\lambda\in(0,1)$ (that is, the first limit in~\eqref{Hausdorff} exists and is equal to $\mc{W}$). But since~$d_{\mc{H}}(U+B_{c^*t},U_1+B_{c^*t})=+\infty$ for all~$t>0$,~\eqref{U1} also implies that, for any fixed $\lambda\in(0,1)$, $d_{\mc{H}}(E_\lambda(t),U+B_{c^*t})=+\infty$ for all~$t$ large enough, hence~\eqref{dH} fails.
\end{proof}

We now exhibit an example where all the conclusions of Theorems~\ref{th1}-\ref{th4} fail and moreover the two limits in~\eqref{Hausdorff} exist but they do not coincide.

\begin{proposition}\label{pro:ce2} 
Let $u$ be the solution to~\eqref{homo} with $f(s)=u(1-s)$ and initial datum $u_0=\1_U$, where
$$U=\big\{x\in\R^N:|x_N|\le e^{-|x'|^2}\big\}.$$
The set $U$ has a non-empty interior, and it does not fulfill~\eqref{hyp:U} or~\eqref{dUrho}, for any $\rho>0$. Then~\eqref{ass0},~\eqref{ass},~\eqref{ass-cpt},~\eqref{Hloc} and~\eqref{dH} all fail with $w(e)$ and $\mc{W}$ given by~\eqref{FGgeneral}-\eqref{conv} and~\eqref{asspre}, and the two limits in~\eqref{Hausdorff} exist but do not coincide.
\end{proposition}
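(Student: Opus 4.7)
My plan has four phases: compute the geometric objects associated with $U$, establish that $u$ in fact spreads at the \emph{uniform} speed $c^*$ in every direction, use this to contradict \eqref{ass0}--\eqref{dH}, and identify both Hausdorff limits in \eqref{Hausdorff}. Throughout $N\ge 2$; for $N=1$, $U=[-1,1]$ and the statement trivializes.

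First, for the geometry: the hyperplane $\{x_N=0\}$ is entirely contained in $U$, so every $\xi\in\Sph$ with $\xi_N=0$ satisfies $\tau\xi\in U$ for all $\tau>0$ and belongs to $\mc{U}(U)$; for $\xi_N\ne0$, the fact that the $U$-thickness is bounded around $\{x_N=0\}$ yields $\dist(\tau\xi,U)/\tau\to|\xi_N|>0$ and places $\xi$ in $\mc{B}(U)$. Thus $\mc{U}(U)=\{\xi\in\Sph:\xi_N=0\}$, and \eqref{FGgeneral}--\eqref{conv} and \eqref{asspre}--\eqref{asspre-formula} give $w(e)=c^*/|e_N|$ for $e_N\ne0$, $w(e)=+\infty$ for $e_N=0$, and $\mc{W}=\{x\in\R^N:|x_N|<c^*\}$. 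Moreover, any $x=(x',x_N)\in U_\rho$ must in particular satisfy $(x',x_N\pm\rho)\in U$, giving $e^{-|x'|^2}\ge\rho$ and hence $|x'|\le\sqrt{\log(1/\rho)}$; so $U_\rho$ is bounded for every $\rho>0$, which yields $\mc{U}(U_\rho)=\emptyset$ and $d_\mc{H}(U,U_\rho)=+\infty$, so both \eqref{hyp:U} and \eqref{dUrho} fail.

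The crux is to show that $t^{-1}E_\lambda(t)\to\overline{B_{c^*}}$ in Hausdorff distance. Since $U$ contains a small ball around $0$, Proposition~\ref{pro1} gives $E_\lambda(t)\supset B_{(c^*-\e)t}$ for $t$ large. For the upper bound I use the KPP inequality $f(s)\le s$: the solution $w$ of $\partial_t w=\Delta w+w$ with $w(0,\cdot)=\1_U$ dominates $u$ and equals $e^t(G_t*\1_U)$. Integrating the heat kernel first over the $y_N$-slice of thickness $2e^{-|y'|^2}$ and then the resulting Gaussian in $y'$ yields an estimate of the form
\[
w(t,x)\le C\,t^{-N/2}\exp\!\Big(t-\tfrac{x_N^2}{4t}-\tfrac{|x'|^2}{4t+1}+\tfrac{|x_N|}{2t}\Big),
\]
whose exponent tends to $-\infty$ uniformly on $\{|x|\ge(c^*+\e)t\}$. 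Hence $E_\lambda(t)\subset B_{(c^*+\e)t}$ for $t$ large, and $t^{-1}E_\lambda(t)\to\overline{B_{c^*}}$ follows.

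The failures follow at once: for $e\in\Sph$ with $e_N=0$ and any $c>c^*$ the formula gives $w(e)=+\infty$ and $ce\in\mc{W}$, yet $u(t,cte)\to0$ by the upper bound, contradicting \eqref{ass0}, \eqref{ass} and, taking $C=\{ce\}$, also \eqref{ass-cpt} and \eqref{Hloc}. For \eqref{dH}, $E_\lambda(t)\subset B_{(c^*+o(1))t}$ has bounded extent in every direction whereas $U+B_{c^*t}\supset\{x_N=0\}+B_{c^*t}$ is unbounded in $x'$, so $d_\mc{H}(E_\lambda(t),U+B_{c^*t})=+\infty$ for every $t$. For \eqref{Hausdorff}, the first limit is $\overline{B_{c^*}}$ by the previous step; for the second, $t^{-1}U\subset\{|y_N|\le t^{-1}\}$ contains $\{y_N=0\}$, so $d_\mc{H}(t^{-1}U,\{y_N=0\})\le t^{-1}\to 0$ and consequently $t^{-1}U+B_{c^*}\to\{y_N=0\}+B_{c^*}=\overline{\mc{W}}$; since $\overline{B_{c^*}}\ne\overline{\mc{W}}$ for $N\ge2$, the two limits exist and differ. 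The main obstacle is the heat-kernel estimate in the crux step: it must be sharp enough to produce an exponent of the form $t-|x|^2/(4t)+o(t)$ (yielding spreading to a ball), rather than the naive $t-x_N^2/(4t)$ (which would only force spreading within the slab~$\mc{W}$); the asymmetric decay of the $U$-tails in $|y'|$ is exactly what feeds the additional $-|x'|^2/(4t+1)$ term.
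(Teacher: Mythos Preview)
The proposal is correct and follows essentially the same strategy as the paper: compute the geometric quantities $\mc{B}(U)$, $\mc{U}(U)$, $\mc{U}(U_\rho)$, $\mc{W}$, establish that $t^{-1}E_\lambda(t)\to B_{c^*}$ via comparison with the linearized equation $\partial_t w=\Delta w+w$, and derive all the failures and the two Hausdorff limits from this single fact. The only minor difference is in the execution of the key step: the paper bounds $u(1,\cdot)$ by a Gaussian and then invokes standard spreading results for initial data with Gaussian tails, whereas you carry the explicit heat-kernel estimate for all $t$ and read off $E_\lambda(t)\subset B_{(c^*+\e)t}$ directly; your route is slightly more self-contained, at the cost of needing to check (as you correctly note) that the exponent $t-x_N^2/(4t)-|x'|^2/(4t+1)+|x_N|/(2t)$ really behaves like $t-|x|^2/(4t)+o(t)$ uniformly on $\{|x|\ge(c^*+\e)t\}$.
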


\begin{proof}
We have that $\mc{B}(U)=\{e\in\Sph:e_N\neq0\}$ and that $\mc{U}(U)=\{e\in\Sph:e_N=0\}$ and $\mc{U}(U_\rho)=\emptyset$ for any $\rho>0$. Hence $\mc{B}(U)\cup\mc{U}(U_\rho)\neq\Sph$. The set $\mc{W}$ defined in the equivalent formulations~\eqref{asspre} and~\eqref{asspre-formula} is given by the slab
$$\mc{W}=\big\{x\in\R^N:|x_N|<c^*\big\},$$
and it is readily seen that 
$$d_{\mc{H}}(t^{-1}U+B_{c^*},\mc{W})\to0\ \hbox{ as }t\to+\infty.$$
However, as for the function $v_2$ in the proof of Proposition~\ref{pro:ce1bis}, one has
\be\label{spreadingc*}
d_{\mc{H}}(t^{-1}E_\lambda(t),B_{c^*})\to0\ \hbox{ as }t\to+\infty,
\ee
for any $\lambda\in(0,1)$. Namely, the first limit in~\eqref{Hausdorff} exists and coincides with $B_{c^*}$, and then it is not equal to $\mc{W}$ (in the sense of the Hausdorff distance). We further deduce from~\eqref{spreadingc*} that~\eqref{ass-cpt} and~\eqref{Hloc} fail (just taking $C=K=\ol{B_c}\cap\{x\in\R^N:x_N=0\}$ with $c>c^*$), as well as~\eqref{dH}, because~$d_{\mc{H}}(B_{c^*},t^{-1}U+B_{c^*})=+\infty$ for all $t>0$. Lastly,~\eqref{spreadingc*} implies that the first lines of~\eqref{ass0} and~\eqref{ass} do not hold (because $w(e)$ given by~\eqref{FGgeneral} satisfies $w(e)=+\infty$ for any $e\in\Sph$ with $e_N=0$).
\end{proof}

\begin{remark}\label{remhypU}{\rm
The example given in Proposition~$\ref{pro:ce2}$ further reveals that condition~\eqref{hyp:U} cannot be relaxed by replacing~$\mc{U}(U_\rho)$ with $\mc{U}(U)$, and moreover that, without~\eqref{hyp:U},  formulas~\eqref{ass0} and~\eqref{ass} can hold with some $w(e)$ which is not given by~\eqref{FGgeneral}.}
\end{remark}

We conclude the paper by showing that~\eqref{Hausdorff} may fail even when the hypotheses of Theorems~\ref{th1}-\ref{th4} are fulfilled.

\begin{proposition}\label{pro:ce3} 
Let $u$ be the solution to~\eqref{homo} with $f(s)=s(1-s)$ and initial datum $u_0=\1_U$, where
$$U=\big\{x\in\R^N:x_N\le \sqrt{|x'|}\big\},$$
which has a non-empty interior and fulfills both~\eqref{hyp:U} and~\eqref{dUrho} for any $\rho>0$ $($hence~\eqref{ass0},~\eqref{ass},~\eqref{ass-cpt},~\eqref{Hloc} and~\eqref{dH} all hold$)$. Then~none of the limits in~\eqref{Hausdorff} exist and moreover 
$$\forall\,\lambda\in(0,1),\ \forall\,t>0,\quad d_{\mc{H}}\Big(\,\frac1t\, E_\lambda(t)\,,\,\mc{W}\Big)=+\infty.$$
\end{proposition}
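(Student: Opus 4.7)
The plan splits into three blocks. First, I would check the geometric hypotheses: $U$ is the subgraph of $\gamma(x')=\sqrt{|x'|}$, with $\gamma(x')=o(|x'|)$ as $|x'|\to+\infty$ and $\gamma$ globally $1/2$-H\"older with constant $1$, so~\eqref{hypgamma3} is satisfied. The subgraph discussion following~\eqref{hypgamma3} then gives $\mc{B}(U)=\{e\in\Sph:e_N>0\}$ and $\mc{U}(U)=\mc{U}(U_\rho)=\{e\in\Sph:e_N\le0\}$ for every $\rho>0$, so both~\eqref{hyp:U} and~\eqref{dUrho} hold; the interior of $U$ is non-empty because $U$ contains the lower half-space. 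Formula~\eqref{asspre-formula} then yields $\mc{W}=\R^+\{e\in\Sph:e_N\le0\}+B_{c^*}=\{x\in\R^N:x_N<c^*\}$.

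Second, to prove $d_{\mc{H}}(t^{-1}E_\lambda(t),\mc{W})=+\infty$ for every $t>0$ and $\lambda\in(0,1)$, I would exhibit points of $E_\lambda(t)$ whose $N$-th coordinate is arbitrarily large. Fixing a unit vector $e'\in\R^{N-1}$, set $x_R:=(R^4 e',R^2-R)\in\R^{N-1}\times\R$ for $R\ge1$; this point lies in $U$ because $\gamma(R^4 e')=R^2>R^2-R$. A short elementary computation, using the $1/2$-H\"older bound $|\sqrt{|z'|}-R^2|\le\sqrt{|z'-R^4 e'|}$ and the fact that the slope of $\gamma$ at $|x'|=R^4$ is $O(R^{-2})$, yields $\dist(x_R,\partial U)\ge R/2$ for $R$ large, so $B_{R/2}(x_R)\subset U$. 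Since $f\ge0$ on $[0,1]$, the parabolic comparison principle gives $u(t,\cdot)\ge G_t*\1_U$ in $[0,+\infty)\times\R^N$, where $G_t$ is the heat kernel, hence $u(t,x_R)\ge\int_{B_{R/2}}G_t(z)\,dz\to1$ as $R\to+\infty$ for fixed $t$. Therefore $x_R\in E_\lambda(t)$ for $R$ sufficiently large, and $t^{-1}x_R\in t^{-1}E_\lambda(t)$ has $N$-th coordinate $(R^2-R)/t\to+\infty$; since $\mc{W}\subset\{x_N\le c^*\}$, this forces $\sup_{y\in t^{-1}E_\lambda(t)}\dist(y,\mc{W})=+\infty$.

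Third, to rule out both Hausdorff limits in~\eqref{Hausdorff}, I would use a Cauchy-type argument. The monotonicity $z\in t^{-1}U\iff z_N\le\sqrt{|z'|/t}$ shows that $t_2^{-1}U\subset t_1^{-1}U$ whenever $t_1<t_2$, whence $t_2^{-1}U+B_{c^*}\subset t_1^{-1}U+B_{c^*}$. Testing with $y_R:=(Re',\sqrt{R/t_1}+c^*-1)$, which lies in $t_1^{-1}U+B_{c^*}$ for large $R$, and estimating vertically the distance to any $w=z+b$ with $z\in t_2^{-1}U$ and $|b|<c^*$ (using that, if $|y_R-w|$ were small, then $|z'|$ would be forced to be close to $R$), I would obtain $\dist(y_R,t_2^{-1}U+B_{c^*})\ge\sqrt{R/t_1}-\sqrt{(R+O(1))/t_2}-O(1)\to+\infty$ as $R\to+\infty$; hence $d_{\mc{H}}(t_1^{-1}U+B_{c^*},t_2^{-1}U+B_{c^*})=+\infty$ for every $t_1\neq t_2$. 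The family $\{t^{-1}U+B_{c^*}\}_{t>0}$ is therefore not Cauchy in Hausdorff distance and admits no Hausdorff limit, proving non-existence of the second limit in~\eqref{Hausdorff}. For the first limit, Theorem~\ref{th4} (whose hypothesis~\eqref{dUrho} we already verified) gives $d_{\mc{H}}(E_\lambda(t),U+B_{c^*t})=o(t)$, i.e.\ $d_{\mc{H}}(t^{-1}E_\lambda(t),t^{-1}U+B_{c^*})=o(1)$; the triangle inequality then forces any Hausdorff limit of $t^{-1}E_\lambda(t)$ to also be a Hausdorff limit of $t^{-1}U+B_{c^*}$, contradicting the previous step. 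The main technical obstacle is the geometric lower bound $\dist(x_R,\partial U)\ge R/2$: although elementary, it must carefully exploit the vanishing of $\gamma'(x')$ as $|x'|\to+\infty$ to show that the closest boundary point to $x_R$ sits essentially directly above it.
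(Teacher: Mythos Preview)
Your argument is correct, and it reaches the same conclusions as the paper, but the paper's proof is considerably shorter and organized differently in two places.

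For the claim $d_{\mc{H}}(t^{-1}E_\lambda(t),\mc{W})=+\infty$, instead of your heat-kernel lower bound together with the geometric estimate $\dist(x_R,\partial U)\ge R/2$ (which you rightly identify as the main technical obstacle), the paper simply observes that the translated solutions $(t,x)\mapsto u\big(t,x+n\mathrm{e}_1+\tfrac{\sqrt n}{2}\mathrm{e}_N\big)$ converge locally uniformly to the constant solution $1$ as $n\to+\infty$, because the translated initial data $\1_U(\cdot+n\mathrm{e}_1+\tfrac{\sqrt n}{2}\mathrm{e}_N)$ converge locally uniformly to $1$. This immediately produces, for each fixed $t>0$, points of $E_\lambda(t)$ with $N$-th coordinate of order $\sqrt n/2$, bypassing any distance-to-boundary computation.

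For the non-existence of the two limits in~\eqref{Hausdorff}, the paper reverses your order: it first rules out $\lim_{t\to+\infty}t^{-1}E_\lambda(t)$ directly from~\eqref{Hloc} (any Hausdorff limit would have to coincide with~$\mc{W}$, contradicting the infinite distance just established), and then transfers this to $t^{-1}U+B_{c^*}$ via~\eqref{dH}. Your explicit Cauchy argument showing $d_{\mc{H}}(t_1^{-1}U+B_{c^*},t_2^{-1}U+B_{c^*})=+\infty$ is valid but does more work than necessary: once $d_{\mc{H}}(t^{-1}E_\lambda(t),\mc{W})=+\infty$ is known,~\eqref{Hloc} already settles the first limit in one line. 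Both routes are sound; the paper's avoids all the explicit geometric estimates.
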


\begin{proof}
It is immediate to see that $\mc{U}(U)=\mc{U}(U_\rho)=\{e\in\Sph:e_N\le0\}$ for any $\rho>0$, and that $\mc{B}(U)=\{e\in\Sph:e_N>0\}$, whence~\eqref{hyp:U} holds. It is also clear that~\eqref{dUrho} holds. We see from~\eqref{asspre-formula} that $\mc{W}=\{x\in\R^N:x_N<c^*\}$. 
		
Next, the functions defined for $n\in\N$ by
$$(t,x)\mapsto u\Big(t,x+n\mathrm{e}_1+\frac{\sqrt n} 2\mathrm{e}_N\Big)$$
converge, as $n\to+\infty$, to the constant solution $\t u(t,x)\equiv1$, locally uniformly in $t\geq0$, $x\in\R^N$. This shows that $d_{\mc{H}}(t^{-1}E_\lambda(t),\mc{W})=+\infty$, for any $\lambda\in(0,1)$ and $t>0$. We deduce that the limit $\lim_{t\to+\infty}t^{-1}E_\lambda(t)$ does not exist, because if it does, it must coincide with~$\mc{W}$ (in the sense of the Hausdorff distance) due to~\eqref{Hloc}. Then the limit $\lim_{t\to+\infty}t^{-1}U+B_{c^*}$ does not exist either, owing to~\eqref{dH}.
\end{proof}


\end{document}